\definecolor{darkgreen}{rgb}{0.008,0.417,0.067}
\def\smallspace{\negthickspace}
\numberwithin{equation}{section}
\newcounter{theorem}
\numberwithin{theorem}{section}
\newtheorem{thm}[theorem]{Theorem}
\newtheorem{lemma}[theorem]{Lemma}
\newtheorem{prop}[theorem]{Proposition}
\newtheorem{cor}[theorem]{Corollary}
\newtheorem*{sas}{Standing Assumption}
\theoremstyle{remark}
\newtheorem*{remark*}{Remark}
\newtheorem{remark}[theorem]{Remark}
\newtheorem{example}[theorem]{Example}
\theoremstyle{definition}
\newtheorem{defn}[theorem]{Definition}
\newcommand{\espan}{\operatorname{span}}
\def\IC{\operatorname{IC}}
\newcommand{\CC}{\mathbb{C}}
\newcommand{\NN}{\mathbb{N}}
\newcommand{\TT}{\mathbb{T}}
\newcommand{\ZZ}{\mathbb{Z}}
\newcommand{\RR}{\mathbb{R}}
\newcommand{\Kk}{\mathcal{K}}
\newcommand{\Tt}{\mathcal{T}}
\newcommand{\Zz}{\mathcal{Z}}
\def\MCE{\operatorname{MCE}}
\newcommand\mydef{:=}
\date{\today}
\author{David Pask}
\email{dpask, asierako, asims@uow.edu.au}
\author{Adam Sierakowski}
\author{Aidan Sims}
\address{School of Mathematics and Applied Statistics \\
	University of Wollongong\\
	Wollongong NSW 2522\\
	AUSTRALIA}	
\subjclass[2010]{46L05}
\keywords{Higher-rank graph; stable rank; $C^*$-algebra; operator algebra}
\thanks{This research was supported by Australian Research Council grant DP180100595.}
\title[Structure theory and stable rank]{Structure theory and stable rank for C*-algebras of finite higher-rank graphs}
\date{\today}
\begin{document}

\begin{abstract}
We study the structure and compute the stable rank of $C^*$-algebras of finite higher-rank
graphs. We completely determine the stable rank of the $C^*$-algebra when the $k$-graph either
contains no cycle with an entrance, or is cofinal. We also determine exactly which finite,
locally convex $k$-graphs yield unital stably finite $C^*$-algebras. We give several examples to
illustrate our results.
\end{abstract}

\maketitle
\section*{Introduction}
\setcounter{footnote}{1}

Stable rank is a non-commutative analogue of topological dimension for $C^*$-algebras introduced
by Rieffel in the early 1980s \cite{MR693043}, and widely used and studied ever since (see, for
example \cite{MR768308, MR1053808, MR1151561, MR1691013, MR1868695, MR2013159, MR2106263,
MR2266375, MR2476944, MR3471097, MR3725497, MR4165470}). The condition of having stable rank~1,
meaning that the invertible elements are dense in the $C^*$-algebra, has attracted significant
attention, in part due to its relevance to classification of $C^*$-algebras. Specifically, all
separable, simple, unital, nuclear, $\Zz$-stable $C^*$-algebras in the UCT class are classified by
their Elliott invariant \cite{Zstable, MR3546681, MR1403994, MR1745197, MR3418247}, and stable
rank distinguishes two key cases: the stably finite $C^*$-algebras in this class have stable
rank~1 \cite[Theorem~6.7]{MR2106263}, while the remainder are Kirchberg algebras with stable rank
infinity \cite[Proposition~6.5]{MR693043}. It follows that a simple $C^*$-algebra whose stable
rank is finite but not equal to~1 does not belong to the class of $C^*$-algebras classified by
their Elliott invariants.

Higher rank graphs (or $k$-graphs) $\Lambda$ are generalisations of directed graphs. They give
rise to an important class of $C^*$-algebras $C^*(\Lambda)$ due to their simultaneous concreteness
of presentation and diversity of structure \cite{MR2511133, MR2520478, MR2258220}. They provide
good test cases for general theory \cite{MR3150171, MR3263040} and have found unexpected
applications in general $C^*$-algebra theory. For example, the first proof that Kirchberg algebras
in the UCT class have nuclear dimension~1 proceeded by realising them as direct limits of 2-graph
$C^*$-algebras \cite{MR3345177}. Nevertheless, and despite their deceptively elementary
presentation in terms of generators and relations, $k$-graph $C^*$-algebras in general remain
somewhat mysterious---for example it remains an unanswered question whether all simple $k$-graph
$C^*$-algebras are $\Zz$-stable and hence classifiable. This led us to investigate their stable
rank. In this paper we shed some light on how to compute the stable rank of  $k$-graph
$C^*$-algebras; though unfortunately, the simple $C^*$-algebras to which our results apply all
have stable rank either 1 or $\infty$, so we obtain no new information about $\Zz$-stability or
classifiability.

This paper focuses on unital $k$-graph $C^*$-algebras. For $k=1$, i.e., for directed graph
$C^*$-algebras (unital or not), a complete characterisation of stable rank has been obtained
\cite{MR2001940, MR1868695, MR2059803}. In this paper our main contribution is for $k\geq 2$, a
characterisation of stable rank for $C^*$-algebras associated to
\begin{enumerate}
\item \label{case.one} finite $k$-graphs that have no cycle with an entrance, and
\item \label{case.two} finite $k$-graphs that are cofinal.
\end{enumerate}

In the first case  \eqref{case.one} we prove that such $k$-graphs are precisely the ones for which
the associated $C^*$-algebra is stably finite. Partial results on how to characterise stably
finite $k$-graph $C^*$-algebras have appeared in the past, see \cite{MR3507995, MR3311883,
MR3354440}. It turns out that in the unital situation, all such $C^*$-algebras are direct sums of
matrix algebras over commutative tori of dimension at most $k$; the precise dimensions of the tori
is determined by the degrees of certain cycles (called initial cycles) in the $k$-graph. Their
$C^*$-algebraic structure is therefore independent of the factorisation property that determines
how the one-dimensional subgraphs of a $k$-graph fit together to give it its $k$-dimensional
nature.

We also settle the second case  \eqref{case.two} where the $k$-graphs are cofinal using our
characterisation of stable finiteness in combination with a technical argument on the von Neumann
equivalence of (direct sums of) vertex projections. We initially obtained this result for selected
$2$-graphs using Python.

We now give a brief outline of the paper; Figure~\ref{fbuha} may also help the reader to navigate.
In Section~\ref{sec.1} we introduce terminology, including the notion (and examples) of an initial
cycle. In Section~\ref{two} we consider the stably finite case. In Proposition~\ref{stably.finite}
we prove that stable finiteness of $C^*(\Lambda)$ is equivalent to the condition that no cycle in
the $k$-graph $\Lambda$ has an entrance. In Theorem~\ref{direct.sum.thm} we characterise the
structure of $C^*(\Lambda)$ in the stably finite case and compute the stable rank of such algebras
in Corollary~\ref{stab.fin}. In Section~\ref{three} we characterise which $k$-graphs yield
$C^*$-algebras with stable rank~1 (Theorem~\ref{sr1} and Corollary~\ref{sr1.cor}) and show how the
dimension of the tori that form the components of their spectra can be read off from (the skeleton
of) the $k$-graph, see Proposition~\ref{ell.mu}.

In Section~\ref{four} we look  at $k$-graphs which are cofinal. Firstly, in
Proposition~\ref{s.r.simple}, we study the  special case when $C^*(\Lambda)$ is simple. Then, in
Theorem~\ref{thm.cofinal}, we compute stable rank when $\Lambda$ is cofinal and contains a cycle
with an entrance (so $C^*(\Lambda)$ is not stably finite). In Section~\ref{five} we illustrate the
difficulty in the remaining case where $\Lambda$ is not cofinal and contains a cycle with an
entrance by considering 2-vertex 2-graphs with this property. We are able to compute the stable
rank exactly for all but three classes of examples, for which the best we can say is that the
stable rank is either 2 or 3.

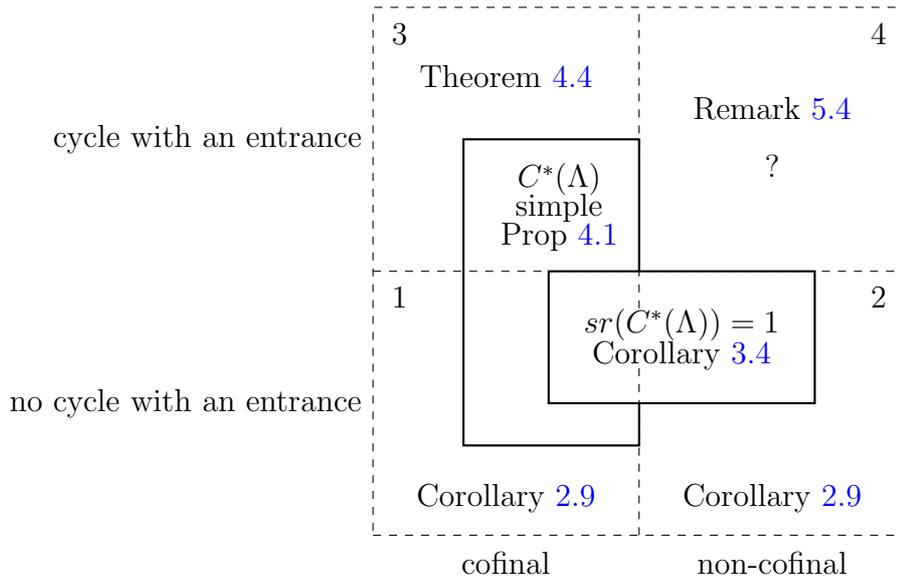
\begin{figure}
\centering
\[
\begin{tikzpicture}[scale=7]
\draw[dashed] (0,0)--(1,0)--(1,1)--(0,1)--(0,0);
\draw
  (0.05,0.45) node {1};
  \draw
    (0.05,0.95) node {3};
    \draw
      (0.95,0.95) node {4};
      \draw
        (0.95,0.45) node {2};
\draw[dashed] (0.5,0.43)--(0.5,1);
\draw[dashed] (0.5,0)--(0.5,.33);
\draw[dashed] (0,0.5)--(1,.5);
\draw[thick]
  (0.33,0.25) rectangle (0.83,.5);
 \draw[thick] (0.5, 0.25)--(0.5,0.17)--(0.17,0.17)--(0.17,0.75)--(0.5,0.75)--(.5,.5);
\draw
  (0,0.75) node[anchor=east]  {cycle with an entrance}
  (0,0.25) node[anchor=east]  {no cycle with an entrance}
  (0.25,-.01) node[anchor=north] {cofinal}
  (0.75,-.01) node[anchor=north] {non-cofinal};
\draw
  (0.35,0.675) node {$C^*(\Lambda)$}
    (0.35,0.62) node {simple}
        (0.35,0.565) node {Prop~\ref{s.r.simple}}
        (0.58,0.40) node  {$sr(C^*(\Lambda))=1$}
        (0.58,0.34) node  {Corollary~\ref{sr1.cor} }
    (0.25,0.07) node {Corollary~\ref{stab.fin}}
        (0.25,0.87) node {Theorem~\ref{thm.cofinal}}

  (0.75,0.07) node {Corollary~\ref{stab.fin}}
    (0.75,.7) node {?}
    (0.75,.81) node {Remark~\ref{unknown}};
\end{tikzpicture}
\]
\caption{Overview of some of our results. The ``?'' indicates unknown stable rank.} \label{fbuha}
\end{figure}

\section{Background}

\label{sec.1} In this section we recall the definition of stable rank, and the notions of stably
finite and purely infinite $C^*$-algebras. We also recall the definitions of $k$-graphs and their
associated $C^*$-algebras. We discuss the path space of a locally convex $k$-graph and describe
initial cycles and their periodicity group. The reader familiar with these terms can skim through
or skip this section.

\subsection{Stable rank of \texorpdfstring{$C^*$}{C*}-algebras}

\label{sec.1.3} Let $A$ be a unital $C^*$-algebra. Following \cite{MR2188261}, let
\[
    Lg_n(A):= \big\{(x_i)_{i=1}^n \in A^n : \exists (y_i)_{i=1}^n\in A^n \text{ such that } \sum_{i=1}^ny_ix_i = 1\big\}.
\]
The \emph{stable rank} of $A$, denoted $sr(A)$, is the smallest $n$ such that $Lg_n(A)$ is dense
in $A^n$, or $\infty$ if there is no such $n$. For a $C^*$-algebra $A$ without a unit, we define
its stable rank to be that of its minimal unitisation $\widetilde{A}$.

A $C^*$-algebra $A$ has stable rank one if and only if  the set $A^{-1}$ of invertible elements in
$A$ is dense in $A$. We will make frequent use of the following key results concerning the stable
rank of $C^*$-algebras of functions on tori, matrix algebras, stable $C^*$-algebras and direct
sums later in the paper:
\begin{enumerate}
\item\label{abelian.result}  $sr(C(\TT^{\ell}))=\left\lfloor {\ell/2}\right\rfloor+1$;
\item\label{matrix.result} $sr(M_n(A))=\left\lceil {(sr(A)-1)/n}\right\rceil+1$;
\item $sr(A\otimes \Kk) = 1$ if $sr(A) = 1$, and $sr(A \otimes \Kk) = 2$ if $sr(A) \not= 1$; and
\item\label{direct.sum} $sr(A\oplus B)=\max (sr(A),sr(B))$.
\end{enumerate}
Stable rank is in general not preserved under Morita equivalence (unless the stable rank is one).
For more details see \cite{MR693043}.

\subsection{Stably finite and purely infinite \texorpdfstring{$C^*$}{C*}-algebras}
A projection in a $C^*$-algebra is said to be \emph{infinite} if it is (von Neumann) equivalent to
a proper subprojection of itself. If a projection is not infinite it is said to be \emph{finite}.
A unital $C^*$-algebra $A$ is said to be \emph{finite} if its unit is a finite projection, and
\emph{stably finite} if $M_n(A)$ is finite for each positive integer $n$
\cite[Definition~5.1.1]{MR1783408}. We refer to \cite{MR3507995, MR3354440} for results about
stably finite graph $C^*$-algebras.

A simple $C^*$-algebra $A$ is \emph{purely infinite} if every nonzero hereditary sub-$C^*$-algebra
of $A$ contains an infinite projection. For the definition when $A$ is non-simple we refer the
reader to \cite{MR1759891}.

\subsection{Higher rank graphs}
Following \cite{MR1745529, MR2139097, MR1961175} we recall some terminology for $k$-graphs. For $k
\ge 1$, a \emph{$k$-graph} is a nonempty, countable, small category equipped with a functor $d :
\Lambda \to \NN^k$ satisfying the \emph{factorisation property}: For all $\lambda \in \Lambda$ and
$m,n\in \NN^k$ such that $d(\lambda) = m + n$ there exist unique $\mu, \nu\in \Lambda$ such that
$d(\mu) = m$, $d(\nu) = n$, and $\lambda = \mu\nu$. When $d(\lambda) = n$ we say $\lambda$ has
\emph{degree} $n$, and we define $\Lambda^n := d^{-1}(n)$. If $k=1$, then $\Lambda$ is isomorphic
to the free category generated by the directed graph with edges $\Lambda^1$ and vertices
$\Lambda^0$. The generators of $\NN^k$ are denoted $e_1, \dots ,e_k$, and $n_i$ denotes the
$i^{\textrm{th}}$ coordinate of $n \in \NN^k$. For $m, n \in \NN^k$, we write $m \le n$ if $m_i
\le n_i$ for all $i$, and we write $m\vee n$ for the coordinatewise maximum of $m$ and $n$, and $m
\wedge n$ for the coordinatewise minimum of $m$ and $n$.

If $\Lambda$ is a $k$-graph, its \emph{vertices} are the elements of $\Lambda^0$. The
factorisation property implies that the vertices are precisely the identity morphisms, and so can
be identified with the objects. For each $\lambda \in \Lambda$ the \emph{source} $s(\lambda)$ is
the domain of $\lambda$, and the \emph{range} $r(\lambda)$ is the codomain of $\lambda$ (strictly
speaking, $s(\lambda)$ and $r(\lambda)$ are the identity morphisms associated to the domain and
codomain of $\lambda$). Given $\lambda,\mu\in \Lambda$, $n \in \NN^k$ and $E\subseteq \Lambda$, we
define
\begin{align*}
\lambda E\mydef&\{\lambda\nu : \nu\in E, r(\nu)=s(\lambda)\},\\
E\mu\mydef&\{\nu\mu : \nu\in E, s(\nu)=r(\mu)\},\\
\Lambda^{\leq n} \mydef& \{\lambda \in \Lambda : d(\lambda) \leq n\text{ and }
    s(\lambda)\Lambda^{e_i} = \emptyset \text{ whenever } d(\lambda) + e_i \leq n \}.
\end{align*}

We say that a $k$-graph $\Lambda$ is \textit{row-finite} if $|v{\Lambda}^{n}|<\infty$ is finite
for each $n\in\mathbb{N}^k$ and $v\in{\Lambda}^0$, \emph{finite} if $|\Lambda^n|<\infty$ for all
$n\in \NN^k$, and \emph{locally convex} if for all distinct $i, j \in \{1, . . . , k\}$, and all
paths $\lambda \in \Lambda^{e_i}$ and $\mu \in \Lambda^{e_j}$ such that $r(\lambda)=r(\mu)$, the
sets $s(\lambda)\Lambda^{e_j}$ and $s(\mu)\Lambda^{e_i}$ are nonempty.

\begin{sas}
We have two standing assumptions. The first is that all of our $k$-graphs $\Lambda$ are finite.
This implies, in particular that they are row-finite. The second is that all of our $k$-graphs
$\Lambda$ are locally convex.
\end{sas}

A vertex $v$ is called a \emph{source} if there exist $i\leq k$ such that
$v\Lambda^{e_i}=\emptyset$. The term \emph{cycle}, distinct from ``generalised cycle''
\cite{MR2920846}, will refer to a path $\lambda \in \Lambda \setminus \Lambda^0$ such that
$r(\lambda) = s(\lambda)$.

We will occasionally illustrate $k$-graphs as $k$-coloured graphs. We refer to \cite{MR3056660}
for the details, but in short there is a one-to-one correspondence between $k$-graphs and
$k$-coloured graphs together with factorisation rules for bi-coloured paths of length~2 satisfying
an associativity condition \cite[Equation~(3.2)]{MR3056660}.

\subsection{Higher rank graph \texorpdfstring{$C^*$}{C*}-algebras}
Let $\Lambda$ be a row-finite, locally convex $k$-graph. Following \cite{MR1961175}, a
\emph{Cuntz--Krieger $\Lambda$-family} in a $C^*$-algebra $B$ is a function $s : \lambda \mapsto
s_\lambda$ from $\Lambda$ to $B$ such that
\begin{enumerate}\renewcommand{\theenumi}{CK\arabic{enumi}}
    \item\label{it:CK1} $\{s_v : v\in \Lambda^0\}$ is a collection of mutually orthogonal
        projections;
    \item\label{it:CK2} $s_\mu s_\nu = s_{\mu\nu}$ whenever $s(\mu) = r(\nu)$;
    \item\label{it:CK3} $s^*_\lambda s_\lambda = s_{s(\lambda)}$ for all $\lambda\in \Lambda$;
        and
    \item\label{it:CK4}  $s_v = \sum_{\lambda \in v\Lambda^{\leq n}} s_\lambda s_\lambda^*$ for
        all $v\in \Lambda^0$ and $n\in \NN^k$.
\end{enumerate}
The $C^*$-algebra $C^*(\Lambda)$ is the universal $C^*$-algebra generated by a Cuntz--Krieger
$\Lambda$-family. It is unital if and only if $|\Lambda^0|<\infty$, in which case $1=\sum_{v\in
\Lambda^0}s_v$. The universal family in $C^*(\Lambda)$ is denoted $\{s_\lambda :\lambda \in
\Lambda\}$.

\subsection{The path space of a \texorpdfstring{$k$}{k}-graph}
Let $\Lambda$ be a $k$-graph. For each path $\lambda\in \Lambda$, and $m \le n \le d(\lambda)$, we
denote by $\lambda(m,n)$ the unique element of $\Lambda^{n-m}$ such that $\lambda =
\lambda'\lambda(m,n)\lambda''$ for some $\lambda', \lambda''\in\Lambda$  with $d(\lambda') = m$
and $d(\lambda'') = d(\lambda) - n$. We abbreviate $\lambda(m,m)$ by $\lambda(m)$. A $k$-graph
morphism between two $k$-graphs is a degree preserving functor.

Following \cite{MR2920846}, for each $m \in (\NN \cup \{\infty\})^k$, we define a $k$-graph
$\Omega_{k,m}$ by $\Omega_{k,m} = \{(p,q) \in \NN^k \times \NN^k : p \le q \le m\}$ with range map
$r(p,q) = (p,p)$, source map $s(p,q) = (q,q)$, and degree map $d(p,q) = q-p$. We identify
$\Omega_{k,m}^0$ with $\{p \in \NN^k : p \le m\}$ via the map $(p,p) \mapsto p$. Given a $k$-graph
and $m \in \NN^k$ there is a bijection from $\Lambda^m$ to the set of morphisms $x : \Omega_{k,m}
\to \Lambda$, given by $\lambda \mapsto \big((p,q)\mapsto \lambda(p,q)\big)$; the inverse is the
map $x \mapsto x(0,m)$. Thus, for each $m\in \NN^k$ we may identify the collection of $k$-graph
morphisms from $\Omega_{k,m}$ to $\Lambda$ with $\Lambda^m$. We extend this idea beyond $m\in
\NN^k$ as follows: Given $m \in (\NN \cup \{\infty\})^k \setminus \NN^k$, we regard each $k$-graph
morphism $x : \Omega_{k,m} \to \Lambda$ as a path of degree $m$ in $\Lambda$ and write $d(x)\mydef
m$ and $r(x)\mydef x(0)$; we denote the set of all such paths by $\Lambda^m$. We denote by
$W_\Lambda$ the collection $\bigcup_{m \in (\NN \cup \{\infty\})^k} \Lambda^m$ of all paths in
$\Lambda$; our conventions allow us to regard $\Lambda$ as a subset of $W_\Lambda$. We call
$W_\Lambda$ the \emph{path space} of $\Lambda$.  We set
$$\Lambda^{\leq \infty}= \big\{ x\in W_\Lambda: x(n)\Lambda^{e_i}=\emptyset\ \textrm{whenever}\  n\leq d(x)\ \textrm{and} \  n_i = d(x)_i\big\},
$$
and for $v\in \Lambda^0$, we define $v\Lambda^{\leq \infty}:=\{x\in \Lambda^{\leq \infty}:
r(x)=v\}$. Given a cycle $\tau$, we define $\tau^\infty$ (informally written as
$\tau^\infty\mydef\tau\tau\tau\dots$) to be the unique element of $W_\Lambda$ such that
$d(\tau^\infty)_i$ is equal to $\infty$ when $d(\tau)_i > 0$ and equal to $0$ when $d(\tau)_i =
0$, and such that $(\tau^\infty)(n\cdot d(\tau), (n+1)\cdot d(\tau)) = \tau$ for all $n \in \NN$.

\subsection{Initial cycles and their periodicity group}
\label{init.cycles.intro} In this section we introduce initial cycles and their associated
periodicity group. As we will see in Corollary~\ref{stab.fin} and Theorem~\ref{sr1}, the
periodicity group plays an important role in the characterisation of stable rank.

Let $\lambda$ be a cycle in a row-finite, locally convex $k$-graph $\Lambda$ . We
say $\lambda$ is a \emph{cycle with an entrance} if there exists $\tau\in r(\lambda)\Lambda$ such
that $d(\tau)\leq d(\lambda^\infty)$ and $\tau\neq \lambda^\infty(0,d(\tau))$.

\begin{defn}{{(\cite{MR2920846})}}
\label{in.cycle} Let $\Lambda$ be a finite, locally convex $k$-graph that has no cycle with an
entrance. We call $\mu\in \Lambda$ an \emph{initial
cycle} if $r(\mu) = s(\mu)$ and if $r(\mu) \Lambda^{e_i} = \emptyset$ whenever $d(\mu)_i = 0$.
\end{defn}

\begin{remark}
While the wording of Definition~\ref{in.cycle} differs from that of \cite{MR2920846}, we will show
(in Proposition~\ref{stably.finite}) that for any $k$-graph $\Lambda$, a path $\mu\in \Lambda$ is
an initial cycle in the sense of Definition~\ref{in.cycle} if and only if it is an initial cycle
in the sense of \cite{MR2920846}.
\end{remark}

\begin{remark}
An initial cycle may be trivial, in the sense that it has degree 0, so it is in fact a vertex.
This vertex must then be a source, as for example $w_4$ in Figure~\ref{pic1.5}. It is not true
that every source is an initial cycle; for example $w_3$ in Figure~\ref{pic1.5} is a source but
not an initial cycle.
\end{remark}

As in \cite{MR2920846}, we let $\IC(\Lambda)$ denote the collection of all initial cycles in
$\Lambda$; if $\Lambda^0$ is finite and $\Lambda$ has no cycle with an entrance, then
$\IC(\Lambda)$ is nonempty---see Lemma~\ref{unit.sum}. A vertex $v\in \Lambda^0$ is said to be
\emph{on the initial cycle} $\mu$ if $v=\mu(p)$ for some $p\leq d(\mu)$\footnote{This is not the
definition in \cite[p.~202]{MR2920846}, but we expect this was the intended definition.}. We let
$(\mu^\infty)^0$ denote the collection of all vertices on an initial cycle $\mu$ and let $\sim$ be
the equivalence relation on $\IC(\Lambda)$ given by $\mu\sim \nu \Leftrightarrow
(\mu^\infty)^0=(\nu^\infty)^0$.

\begin{remark}
\label{initial.cycle.name} For a finite, locally convex $k$-graph that has no cycle with an
entrance, each initial cycle is an ``initial
segment'' in the following sense:
\begin{enumerate}
\item \label{gen.source} Every path with range on the initial cycle is in the initial cycle, so
    paths can not ``enter'' an initial cycle (see Lemma~\ref{lem.init2}).
\end{enumerate}

Without the assumption that $\Lambda^0$ is finite and $\Lambda$ has no cycle with
an entrance property \eqref{gen.source} might fail. This is for example the case for the
$1$-graph with one vertex and two edges representing the Cuntz algebra $\mathcal{O}_2$. This
suggests that, in general, a different terminology should perhaps be used.
\end{remark}

As in \cite{MR2920846} we associate a group $G_\mu$ to each initial cycle $\mu$. Let $\Lambda$ be
a finite, locally convex $k$-graph that has no cycle with an entrance. Let $\mu$ be an initial
cycle in $\Lambda$. If $\mu$ is not a vertex, we define
\begin{equation}
\label{groupofmu}
G_\mu\mydef\{m-n: n,m\leq d(\mu^{\infty}), \mu^{\infty}(m)=\mu^{\infty}(n)\},
\end{equation}
otherwise, we let $G_\mu\mydef\{0\}$.

\begin{defn}
\label{rank.Gmu} By \cite[Lemma~5.8]{MR2920846}, $G_\mu$ is a subgroup of $\ZZ^k$, and hence
isomorphic to $\ZZ^{\ell_\mu}$ for some $\ell_\mu\in \{0,\dots, k\}$: we often refer to $\ell_\mu$
as the \emph{rank} of $G_\mu$.
\end{defn}

\begin{remark}
It turns out that $\ell_\mu = |\{i\leq k: d(\mu)_i> 0\}|$---see Proposition~\ref{ell.mu}.
\end{remark}

\subsection{Examples of initial cycles}
\label{sec.ex}
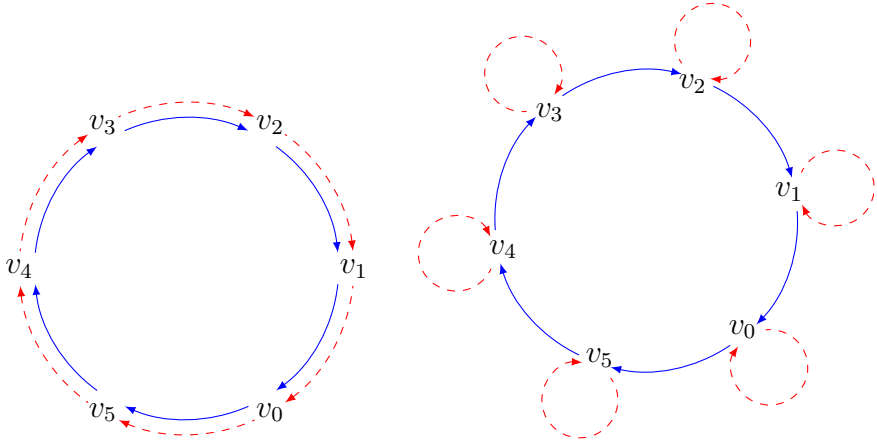
\begin{figure}
\begin{center}
\begin{tikzpicture}

\def \n {6}
\def \radiuss {2.0cm}
\def \radiusm {2.2cm}
\def \radiusb {2.2cm}
\def \margin {6}

\foreach \s in {5,...,0} {
  \node[circle, inner sep=0pt] at ({360/\n * (\s - 1)}:\radiusm) {$v_{\s}$};
  \draw[blue, <-, >=latex] ({360/\n * (\s - 1)+\margin}:\radiuss)
    arc ({360/\n * (\s - 1)+\margin}:{360/\n * (\s)-\margin}:\radiuss);

} \foreach \s in {5,...,0} {
  \draw[red, dashed, <-, >=latex] ({360/\n * (\s - 1)+\margin}:\radiusb)
    arc ({360/\n * (\s - 1)+\margin}:{360/\n * (\s)-\margin}:\radiusb);
}
\end{tikzpicture}
\ \ \
\begin{tikzpicture}

\def \n {6}
\def \radiuss {2.0cm}
\def \radiusm {2.2cm}
\def \radiusb {2.2cm}
\def \difff {2.15cm}
\def \margin {6}
\def \margins {150}

\foreach \s in {5,...,0} {
  \node[circle, inner sep=0pt] at ({360/(\n) * (\s - 0.81)}:\radiusm-8) {$v_{\s}$};
  \draw[blue, <-, >=latex] ({360/\n * (\s +\difff - 1)+\margin}:\radiuss)
    arc ({360/\n * (\s  +\difff - 1)+\margin}:{360/\n * (\s +\difff )-\margin}:\radiuss-1);

} \foreach \s in {5,...,0} {
  \draw[red, dashed, <-, >=latex] ({360/\n * (\s - 1)+\margin}:\radiusb-4)
    arc ({360/\n * (\s - 1)+\margin-310+\margins}:{360/\n * (\s - 1)+\margin+\margins}:0.5);
}
\end{tikzpicture}
\caption{Two $2$-graphs $\Lambda_1, \Lambda_2$, each containing lots of initial cycles, but only
one such up to $\sim$\,-equivalence.} \label{pic1}
\end{center}
\end{figure}
Figure~\ref{pic1} illustrates\footnote{We have illustrated  $\Lambda_1$ and $\Lambda_2$ as
2-coloured graphs, we refer to \cite{MR3056660} for details on how to visualise $k$-graphs as
colours graphs.} two examples of $2$-graphs $\Lambda_1$ and $\Lambda_2$ containing lots of initial
cycles. In fact, a cycle in either $\Lambda_1$ or $\Lambda_2$ is an initial cycle precisely if it
contains edges of both colours. Each initial cycle in either $\Lambda_i$ visits every vertex, so
any two initial cycles in $\Lambda_i$ are $\sim$\,-equivalent. A computation shows that for each
initial cycle $\mu$ in either $\Lambda_i$, $G_{\mu}\cong \ZZ^{2}$. Notice that each vertex on a
cycle in either $\Lambda_i$ has exactly one red (dashed) and one blue (solid) incoming and
outgoing edge and exactly one infinite path with range at that vertex.

Below we illustrate how the stable rank of each $C^*(\Lambda_i)$ can be computed. For this we need
some definitions and a lemma.
\begin{enumerate}
\item Fix an integer $n \ge 1$, let $L_n$ denote the connected $1$-graph with $n$ vertices $v_0
    , \ldots , v_{n-1}$ and $n$ morphisms $f_0 , \ldots , f_{n-1}$ of degree $1$ such that $s (
    f_i ) = v_{i+1\!\!\pmod{n}}$ and $r ( f_i ) = v_{i}$ for $0 \le i \le n-1$.
\item Let $( \Lambda_1 , d_1 )$ and $( \Lambda_2 , d_2 )$ be $k_1$-, $k_2$-graphs respectively,
    then $( \Lambda_1 \times \Lambda_2 , d_1 \times d_2 )$ is a  $(k_1 + k_2)$-graph where
    $\Lambda_1 \times \Lambda_2$ is the product category and $d_1 \times d_2 : \Lambda_1 \times
    \Lambda_2 \rightarrow \NN^{k_1 + k_2}$ is given by $d_1 \times d_2 ( \lambda_1 , \lambda_2 )
    = ( d_1 ( \lambda_1 ) , d_2 ( \lambda_2 ) ) \in \NN^{k_1} \times \NN^{k_2}$
    \cite[Proposition~1.8]{MR1745529}.
\item Let $f : \NN^\ell \rightarrow \NN^k$ be a monoid morphism. If $( \Lambda ,d)$ is a
    $k$-graph we may form the $\ell$-graph $f^* ( \Lambda )$ as follows: $f^* ( \Lambda ) = \{ (
    \lambda , n ) : d ( \lambda ) = f(n) \}$ with $d ( \lambda , n ) = n$, $s ( \lambda , n ) =
    s ( \lambda )$ and $r ( \lambda , n ) = r ( \lambda )$ \cite[Example~1.10]{MR1745529}.
\item Let $\Lambda$ be a $k$--graph and define $g_i : \NN \to \NN^k$ by $g_i (n) = n e_i$ for $1
    \leq i \leq k$ (so $\ell=1$). The $1$-graphs $\Lambda_i : = g_i^* ( \Lambda )$ are called
    the \emph{coordinate graphs} of $\Lambda$.
\end{enumerate}
\begin{lemma} \cite[Proposition 1.11, Corollary 3.5(iii), Corollary 3.5(iv)]{MR1745529}
\label{examples12}
\begin{enumerate}
\item Let $( \Lambda_i , d_i )$ be $k_i$-graphs for $i = 1, 2$, then $C^* ( \Lambda_1 \times
    \Lambda_2 ) \cong C^* ( \Lambda_1 ) \otimes C^* ( \Lambda_2 )$ via the map $s_{(\lambda_1 ,
    \lambda_2)} \mapsto s_{\lambda_1} \otimes s_{\lambda_2}$ for $( \lambda_1 , \lambda_2 ) \in
    \Lambda_1 \times \Lambda_2$.
\item Let $\Lambda$ be a $k$-graph and $f : \NN^\ell \rightarrow \NN^k $ a surjective monoid
    morphism. Then $C^* (f^* ( \Lambda ) ) \cong C^* ( \Lambda ) \otimes C ( \TT^{\ell - k})$.
\end{enumerate}
\end{lemma}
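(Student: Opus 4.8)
This is a restatement of results in \cite{MR1745529}; here is the argument I would use. The plan is to prove both isomorphisms by the standard method for $k$-graph $C^*$-algebras: use the universal property of $C^*(\Lambda_1\times\Lambda_2)$, resp.\ of $C^*(f^*(\Lambda))$, to build the map going out of it, check surjectivity directly, and obtain injectivity from the gauge-invariant uniqueness theorem \cite{MR1961175}. For part (1) the combinatorial fact I would isolate first is the identity $(\Lambda_1\times\Lambda_2)^{\leq(n_1,n_2)}=\Lambda_1^{\leq n_1}\times\Lambda_2^{\leq n_2}$: identifying each generator of $\NN^{k_1+k_2}$ with a generator of $\NN^{k_1}$ or of $\NN^{k_2}$, one sees that $s(\lambda_1,\lambda_2)(\Lambda_1\times\Lambda_2)^{e_j}$ is nonempty precisely when the corresponding set $s(\lambda_i)\Lambda_i^{e_j}$ is nonempty, and the identity then follows from the definition of $\Lambda^{\leq n}$.

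For (1), with this in hand I would check that $\{s_{\lambda_1}\otimes s_{\lambda_2}:(\lambda_1,\lambda_2)\in\Lambda_1\times\Lambda_2\}$ is a Cuntz--Krieger $(\Lambda_1\times\Lambda_2)$-family in $C^*(\Lambda_1)\otimes C^*(\Lambda_2)$ ($k$-graph $C^*$-algebras are nuclear, so the tensor product is unambiguous): relations \ref{it:CK1}--\ref{it:CK3} are immediate from the relations in the two factors, and \ref{it:CK4} factors as the product of the two \ref{it:CK4}-sums via the set identity above. The universal property gives a homomorphism $\phi$ with $\phi(s_{(\lambda_1,\lambda_2)})=s_{\lambda_1}\otimes s_{\lambda_2}$. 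Since $\Lambda_2$ is finite, $\sum_{w\in\Lambda_2^0}\phi(s_{(\lambda_1,w)})=s_{\lambda_1}\otimes 1$, and likewise $1\otimes s_{\lambda_2}$ lies in the image; these generate the whole tensor product, so $\phi$ is onto. For injectivity, $\phi$ intertwines the gauge action of $\TT^{k_1+k_2}$ with the product $\gamma^{\Lambda_1}\otimes\gamma^{\Lambda_2}$ of the gauge actions, and $\phi(s_{(v_1,v_2)})=s_{v_1}\otimes s_{v_2}\neq 0$ for every vertex, so the gauge-invariant uniqueness theorem finishes it. (One can instead exhibit the inverse directly, via $s_{\lambda_1}\mapsto\sum_{w\in\Lambda_2^0}s_{(\lambda_1,w)}$ and $s_{\lambda_2}\mapsto\sum_{w\in\Lambda_1^0}s_{(w,\lambda_2)}$, after checking these have commuting ranges.)

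For (2), let $\widetilde f\colon\ZZ^\ell\to\ZZ^k$ extend $f$; surjectivity of $f$ makes $K:=\ker\widetilde f$ a rank-$(\ell-k)$ direct summand of $\ZZ^\ell$, so $C^*(K)\cong C(\TT^{\ell-k})$ with canonical unitaries $u_m$ ($m\in K$). I would fix a section $\sigma\colon\ZZ^k\to\ZZ^\ell$ of $\widetilde f$ and put $\kappa(x):=x-\sigma(f(x))\in K$, which is additive and is the identity on $K$. Then $\{s_\lambda\otimes u_{\kappa(n)}:(\lambda,n)\in f^*(\Lambda)\}$ should be a Cuntz--Krieger $f^*(\Lambda)$-family in $C^*(\Lambda)\otimes C^*(K)$: \ref{it:CK1}--\ref{it:CK3} need only additivity of $\kappa$ and unitarity of the $u_m$, while \ref{it:CK4}, once the unitary factors cancel in $(s_\lambda\otimes u_{\kappa(n)})(s_\lambda\otimes u_{\kappa(n)})^*=s_\lambda s_\lambda^*\otimes 1$, reduces to $\sum_{(\mu,p)\in(v,0)f^*(\Lambda)^{\leq N}}s_\mu s_\mu^*=s_v$, which I would deduce from the \ref{it:CK4}-relations of $\Lambda$ after describing $f^*(\Lambda)^{\leq N}$ explicitly. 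The universal property yields $\Phi$ with $\Phi(t_{(\lambda,n)})=s_\lambda\otimes u_{\kappa(n)}$. For surjectivity, write $m\in K$ as $m_+-m_-$ with $m_\pm\in\NN^\ell$ and $f(m_+)=f(m_-)=:p$; choosing $\lambda\in v\Lambda^p$ one gets $\Phi\big(\sum_{\lambda\in v\Lambda^p}t_{(\lambda,m_+)}t_{(\lambda,m_-)}^*\big)=s_v\otimes u_m$, and combined with $s_\lambda\otimes 1=\Phi(t_{(\lambda,n_0)})\,(s_{s(\lambda)}\otimes u_{-\kappa(n_0)})$ for any $n_0$ with $f(n_0)=d(\lambda)$, this shows the image generates $C^*(\Lambda)\otimes C^*(K)$. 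Injectivity comes from the gauge-invariant uniqueness theorem once more: $\Phi$ is nonzero on every vertex projection, and it intertwines the $\TT^\ell$-gauge action on $C^*(f^*(\Lambda))$ with the action $w\mapsto\gamma^\Lambda_{\zeta(w)}\otimes\beta_w$, where $\zeta(w)\in\TT^k$ is the character $p\mapsto w^{\sigma(p)}$ and $\beta_w$ is the character $m\mapsto w^m$ of $K$; the factorisation $w^n=w^{\sigma(f(n))}w^{\kappa(n)}$ makes this work.

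Everything above except the \ref{it:CK4} step for part (2) is bookkeeping. The one place needing genuine care is the explicit description of $(v,0)f^*(\Lambda)^{\leq N}$ and the check that the projections $s_\mu s_\mu^*$ it indexes add up to $s_v$ without repetition; this is precisely where the hypotheses under which \cite{MR1745529} operates (notably absence of sources, which also guarantees $v\Lambda^p\neq\emptyset$ above) enter, and it is the only step that is not essentially mechanical.
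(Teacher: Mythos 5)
The paper offers no proof of this lemma at all---it is quoted verbatim from Kumjian--Pask \cite[Proposition~1.11, Corollary~3.5(iii),(iv)]{MR1745529}---so there is nothing internal to compare against, and your reconstruction (build a Cuntz--Krieger family in the target, invoke the universal property, check surjectivity on generators, and get injectivity from the gauge-invariant uniqueness theorem, with the intertwined action $w\mapsto\gamma^{\Lambda}_{\zeta(w)}\otimes\beta_w$ in part~(2)) is the standard argument and is correct. The one step you rightly flag, relation (CK4) for $f^*(\Lambda)$, is immediate in the no-sources setting of the cited reference, where $(v,0)f^*(\Lambda)^{\leq N}=(v,0)f^*(\Lambda)^{N}=v\Lambda^{f(N)}\times\{N\}$ so the sum is exactly the (CK4) sum for $\Lambda$ at degree $f(N)$; and your surjectivity argument in part~(1) uses $1=\sum_{w\in\Lambda_i^0}s_w$, which is available under this paper's standing finiteness assumption (in the general row-finite case one instead checks surjectivity on the spanning elements $s_{\lambda_1}s_{\mu_1}^*\otimes s_{\lambda_2}s_{\mu_2}^*$).
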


Let $\Lambda$ be a $1$-graph and define $f_1 : \NN^2 \rightarrow \NN$ by $( m_1, m_2) \mapsto m_1
+ m_2$. Then $f_1^* ( L_6 )$ is isomorphic to the $2$-graph $\Lambda$ shown on the left in
Figure~\ref{pic1}. The $2$-graph $L_6 \times L_1$ is isomorphic to the $2$-graph shown on the
right in Figure~\ref{pic1}. Using Lemma~\ref{examples12} and that $C^*(L_6)\cong M_6(C(\TT))$
\cite[Lemma~2.4]{MR1452183} we get $C^*(\Lambda_i)\cong C^* (L_6) \otimes C(\TT) \cong
M_6(C(\TT^2))$, $i=1,2$, so both have stable rank~2 as discussed in Section~\ref{sec.1.3}.

\section{Structure and stable rank in the stably finite case.}
\label{two} In this section we study finite $k$-graphs whose $C^*$-algebras are stably finite,
corresponding to boxes 1~and~2 in Figure~\ref{fbuha}. In Proposition~\ref{stably.finite} we show
that stable finiteness is equivalent to the lack of infinite projections and provide a
characterisation in terms of properties of the $k$-graph. We also provide a structure result and
compute stable rank of such $C^*$-algebras---see Theorems
\ref{direct.sum.thm}~and~\ref{stable.rank.thm}. We begin with four technical lemmas needed to
prove Theorem~\ref{direct.sum.thm}.

\begin{lemma}\label{lem.init2}
Let $\Lambda$ be a finite, locally convex $k$-graph that has no cycle with an
entrance. Let $v\in\Lambda^0$ be a vertex on an initial cycle $\mu\in \Lambda$. Then
\begin{enumerate}
\item \label{part1.init} there exist paths $\iota_v,\tau_v\in \Lambda$ such that
    $\mu=\iota_v\tau_v$ and $s(\iota_v)=v=r(\tau_v)$;
\item \label{part2.init} the path $\mu_v\mydef \tau_v\iota_v$ satisfies $r(\mu_v)= s(\mu_v)$,
    and $r(\mu_v) \Lambda^{e_i} = \emptyset$ whenever $d(\mu_v)_i = 0$;
\item \label{part3.init} if $f\in \Lambda^{e_i}$ is an edge with range $v$ on $\mu$, then
    $f=\mu_{v}(0, e_i)$ and $ \mu=\nu'f\nu''$ for some $\nu',\nu''\in \Lambda$;
\item \label{part4.init} if $n\leq d(\mu)$ and $\lambda\in v\Lambda^{\leq n}$, then
    $\lambda=\mu_{v}(0, d(\lambda))$; and
\item \label{part5.init} $s_{\tau_v}s_{\tau_v}^*=s_v$ and $s_{\tau_v}^*s_{\tau_v}=s_{s(\mu)}$.
\end{enumerate}
\end{lemma}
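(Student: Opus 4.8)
The plan is to reduce the whole lemma to one structural fact, which I will call \emph{uniform branching}: for any initial cycle $\rho$, any vertex $w$ on $\rho$, and any $i\le k$, one has $w\Lambda^{e_i}=\emptyset$ if and only if $d(\rho)_i=0$. The ``only if'' direction is immediate: if $d(\rho)_i>0$, write $w=\rho(q)$ and $\rho_w:=\rho(q,d(\rho))\rho(0,q)$, a path with range $w$ and degree $d(\rho)$ (by the factorisation property and $r(\rho)=s(\rho)$), so that $\rho_w(0,e_i)\in w\Lambda^{e_i}$. For the ``if'' direction, fix $i$ with $d(\rho)_i=0$ and induct on $|q|:=\sum_l q_l$ over $q\le d(\rho)$ with $\rho(q)=w$. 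If $q=0$ then $w=r(\rho)$ and $r(\rho)\Lambda^{e_i}=\emptyset$ by Definition~\ref{in.cycle}. Otherwise pick $l$ with $q_l>0$ (so $d(\rho)_l\ge q_l>0$, forcing $l\ne i$), put $e:=\rho(q-e_l,q)\in\Lambda^{e_l}$, and observe that any $g\in w\Lambda^{e_i}=s(e)\Lambda^{e_i}$ gives a path $eg$ of degree $e_l+e_i$ whose refactoring $eg=g'e'$ with $d(g')=e_i$ produces $g'\in\rho(q-e_l)\Lambda^{e_i}$, contradicting the inductive hypothesis. I expect this induction --- and spotting that it uses only the factorisation property, not local convexity --- to be the crux; the degenerate case where $\mu$ is itself a vertex (so $\iota_v=\tau_v=\mu_v=v$ and every assertion is immediate) should be set aside first.

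Granting uniform branching, the rest is bookkeeping. For \eqref{part1.init}, write $v=\mu(p)$ with $p\le d(\mu)$ and factor $\mu=\iota_v\tau_v$ with $d(\iota_v)=p$; then $s(\iota_v)=\mu(p)=v=r(\tau_v)$. For \eqref{part2.init}, $\tau_v\iota_v$ is composable as $s(\tau_v)=s(\mu)=r(\mu)=r(\iota_v)$, so $r(\mu_v)=v=s(\mu_v)$ and $d(\mu_v)=d(\tau_v)+d(\iota_v)=d(\mu)$; and $r(\mu_v)\Lambda^{e_i}=v\Lambda^{e_i}=\emptyset$ whenever $d(\mu_v)_i=d(\mu)_i=0$ by uniform branching for $\mu$. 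In particular $\mu_v$ is itself an initial cycle. I would also record $\mu^\infty=\iota_v\mu_v^\infty$ (immediate from $\mu=\iota_v\tau_v$ and $r(\mu)=s(\mu)$) and that $\mu_v$ has no entrance: an entrance $\tau\in v\Lambda$ for $\mu_v$ (so $d(\tau)\le d(\mu_v^\infty)=d(\mu^\infty)$, $\tau\ne\mu_v^\infty(0,d(\tau))$) would give $\iota_v\tau\in r(\mu)\Lambda$ with $d(\iota_v\tau)=p+d(\tau)\le d(\mu^\infty)$ (the coordinates with $d(\mu)_i=0$ contribute $0$ on each side) and $\iota_v\tau\ne\iota_v\mu_v^\infty(0,d(\tau))=\mu^\infty(0,d(\iota_v\tau))$ by uniqueness in the factorisation property --- an entrance for $\mu$, which is forbidden.

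Parts \eqref{part4.init} and \eqref{part3.init} then follow from the no-entrance property of $\mu_v$, which forces $v\Lambda^m=\{\mu_v^\infty(0,m)\}$ for every $m\le d(\mu_v^\infty)$. For \eqref{part4.init}: any $\lambda\in v\Lambda^{\le n}$ with $n\le d(\mu)$ has $d(\lambda)\le d(\mu)\le d(\mu_v^\infty)$, so $\lambda=\mu_v^\infty(0,d(\lambda))=\mu_v(0,d(\lambda))$. For \eqref{part3.init}: if $f\in\Lambda^{e_i}$ is an edge of $\mu$ with range $v$, then $v\Lambda^{e_i}\ne\emptyset$, so $d(\mu)_i>0$ by uniform branching, hence $e_i\le d(\mu_v^\infty)$ and $f=\mu_v^\infty(0,e_i)=\mu_v(0,e_i)$; and $\mu=\nu'f\nu''$ for suitable $\nu',\nu''$ since $f$ appears in $\mu$ (for an arbitrary $f\in v\Lambda^{e_i}$ one instead locates $f$ as the leading $e_i$-edge of $\mu^\infty=\iota_v\mu_v^\infty$ and carries that occurrence back into one copy of $\mu$).

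Finally \eqref{part5.init}: $s_{\tau_v}^*s_{\tau_v}=s_{s(\tau_v)}=s_{s(\mu)}$ is \eqref{it:CK3}. For $s_{\tau_v}s_{\tau_v}^*=s_v$, apply \eqref{it:CK4} with $n=d(\tau_v)$ to get $s_v=\sum_{\lambda\in v\Lambda^{\le d(\tau_v)}}s_\lambda s_\lambda^*$, and show the index set equals $\{\tau_v\}$. By \eqref{part4.init}, any $\lambda$ in it is $\mu_v(0,d(\lambda))$ with $d(\lambda)\le d(\tau_v)$; if $d(\lambda)\ne d(\tau_v)$, pick $i$ with $d(\lambda)_i<d(\tau_v)_i$, so $d(\lambda)+e_i\le d(\tau_v)$ and $d(\mu)_i\ge d(\tau_v)_i>0$, while $s(\lambda)=\mu_v(d(\lambda))$ is a vertex on the initial cycle $\mu_v$, whence $s(\lambda)\Lambda^{e_i}\ne\emptyset$ by uniform branching for $\mu_v$ --- contradicting $\lambda\in v\Lambda^{\le d(\tau_v)}$. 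So $\lambda=\mu_v(0,d(\tau_v))=\tau_v$, the sum has a single term, and $s_v=s_{\tau_v}s_{\tau_v}^*$.
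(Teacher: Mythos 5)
Your argument is, in substance, the paper's: the ``uniform branching'' claim is exactly what the paper proves in part \eqref{part2.init} (the paper does it in one step, prepending $\iota_v$ to $\alpha\in v\Lambda^{e_i}$ and refactoring to exhibit an element of $r(\mu)\Lambda^{e_i}$, rather than by induction along edges, but the idea is identical), and your derivation of \eqref{part3.init}--\eqref{part5.init} from the singleton property $v\Lambda^{m}=\{\mu_v(0,m)\}$ is the same mechanism the paper uses to get $v\Lambda^{\leq d(\mu_v)}=\{\mu_v\}$. Two small remarks: you need not re-derive that $\mu_v$ has no entrance from the corresponding property of $\mu$ --- the standing hypothesis is that \emph{no} cycle in $\Lambda$ has an entrance, and $\mu_v$ is a cycle --- and parts \eqref{part1.init}, \eqref{part4.init}, \eqref{part5.init} are correct as written.

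The one step that fails is the parenthetical in your treatment of \eqref{part3.init}: for an arbitrary $f\in v\Lambda^{e_i}$ you propose to ``carry the occurrence of $f$ in $\mu^\infty$ back into one copy of $\mu$''. That occurrence sits at position $(p,p+e_i)$ of $\mu^\infty$, and when $p_i=d(\mu)_i$ it straddles two consecutive copies of $\mu$; there need not be any occurrence inside a single copy. Concretely, let $\Lambda=g^*(L_5)$ for the morphism $g(m_1,m_2)=m_1+2m_2$, and let $\mu$ be the cycle of degree $(1,2)$ at $v_0$, so that $\mu(q)=v_{q_1+2q_2}$ for $q\le(1,2)$. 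The vertex $v_4$ lies on $\mu$ only at $q=(0,2)$, so no segment $\mu(q,q+e_2)$ with $q+e_2\le(1,2)$ has range $v_4$, and the unique element of $v_4\Lambda^{e_2}$ admits no factorisation $\mu=\nu'f\nu''$. So the clause ``$\mu=\nu'f\nu''$'' is only tenable under the reading in which $f$ is assumed to be an edge \emph{lying on} $\mu$ (in which case it is part of the hypothesis, and your singleton argument still gives $f=\mu_v(0,e_i)$). The first clause $f=\mu_v(0,e_i)$ --- which is all that part \eqref{part4.init} and the rest of the paper ever use --- does hold for arbitrary $f\in v\Lambda^{e_i}$ exactly as you argue. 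For what it is worth, the paper's own proof of this clause (the assertion ``$f=\iota_v(0,e_i)$ if $d(\tau_v)_i=0$'', whose two sides do not even have the same range in general) has the same defect, so this is a flaw in the statement as much as in your proof; but the step as you wrote it does not go through and should not be presented as if it did.
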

\begin{proof}
\eqref{part1.init}. Since $v$ is a vertex on $\mu$, we have $v=\mu(p)$ for some $p\leq d(\mu)$.
Set $\iota_v\mydef \mu(0,p)$ and $\tau_v\mydef \mu(p,d(\mu))$. Then $\mu=\iota_v\tau_v$ and
$s(\iota_v)=v=r(\tau_v)$.

\eqref{part2.init}. By property \eqref{part1.init}, $s(\iota_v)=v=r(\tau_v)$, so the path
$\mu_v\mydef \tau_v\iota_v\in \Lambda$ satisfies $r(\mu_v)=s(\mu_v)$. Suppose $r(\mu_v)
\Lambda^{e_i}$ is nonempty, say $\alpha\in r(\mu_v) \Lambda^{e_i}$. Then
$(\iota_v\alpha)(0,e_i)\in r(\mu)\Lambda^{e_i}$. Since $\mu$ is an initial cycle, it follows that
$d(\mu)_i \neq 0$.

\eqref{part3.init}. Suppose $f\in \Lambda^{e_i}$ is an edge with range $v$ on $\mu$. Since
$r(f)=v=r(\mu_{v})$, we have $f \in r(\mu_{v})\Lambda^{e_i}$. Now property \eqref{part2.init}
ensures that $d(\mu_v)_i  \neq 0$. Since $d(\mu_v)_i > 0$, there exists a path $\lambda \in
s(f)\Lambda^{\leq d(\mu_v)-e_i}$. Hence $f\lambda\in \Lambda^{\leq e_i}\Lambda^{\leq
d(\mu_v)-e_i}=\Lambda^{\leq d(\mu_v)}$. Now using that $\mu_v\in v\Lambda$ is a cycle and that
$\Lambda$ has no cycle with an entrance, it follows that $v\Lambda^{\leq d(\mu_v)}= \{\mu_v\}$.
Hence $\mu_v=f\lambda$. Since $f\lambda=\mu_v=\tau_v\iota_v$, we get $f=\tau_v(0,e_i)$ if
$d(\tau_v)_i>0$ and  $f=\iota_v(0,e_i)$ if $d(\tau_v)_i=0$.

\eqref{part4.init}. Fix $n\leq d(\mu)$ and $\lambda\in v\Lambda^{\leq n}$. If $d(\lambda) = 0$
then $\lambda = v$ and the statement is trivial, so we may assume that $\lambda\not\in \Lambda^0$.
Write $d(\lambda)=e_{i_1}+\dots+e_{i_m}$ where $i_1\dots,i_m\in \{1,\dots,k\}$. By the
factorisation property $\lambda=\lambda_1\dots \lambda_m$ for some $\lambda_j \in
\Lambda^{e_{i_j}}$. Repeated applications of part~\eqref{part3.init} give $\lambda_j =
\mu_{r(\lambda_j)}(0, e_{i_j})$ for $j \le m$. Since $d(\lambda)\leq d(\mu)$, it follows that
$\lambda=\mu_{v}(0, d(\lambda))$.

\eqref{part5.init}. Since $s(\tau_v)=s(\mu)$ we have $s_{\tau_v}^*s_{\tau_v}=s_{s(\mu)}$. For
$n\mydef d(\tau_v)$ notice that $n\leq d(\mu)$. Fix $\lambda\in v\Lambda^{\leq n}$. Using
part~\eqref{part4.init} we  have $\lambda=\mu_{v}(0, d(\lambda))$. Since $\tau_v\in v\Lambda^{\leq
n}$ and since $d(\lambda)\leq n$ we have $\tau_v=\mu_{v}(0, n)=\lambda \mu_{v}(d(\lambda), n)$.
But both $\tau_v$ and $\lambda$ belong to $\Lambda^{\leq n}$, so $\tau_v=\lambda$. Consequently
$v\Lambda^{\leq n}=\{\tau_v\}$ and $s_{\tau_v}s_{\tau_v}^*=\sum_{\lambda\in v\Lambda^{\leq
n}}s_\lambda s_\lambda^*=s_{v}$.
\end{proof}

\begin{lemma}
\label{unit.sum} Let $\Lambda$ be a finite, locally convex $k$-graph that has no
cycle with an entrance. Let $N\mydef(|\Lambda^0|,\dots, |\Lambda^0|)\in \NN^k$. Then
\begin{enumerate}
\item \label{part1.get} $\sum_{\lambda\in \Lambda^{\leq N}} s_\lambda  s_\lambda^* =
    1_{C^*(\Lambda)}$; and
\item \label{part2.get} for every $\lambda\in \Lambda^{\leq N}$, $s(\lambda)$ is a vertex on an
    initial cycle.
\end{enumerate}
\end{lemma}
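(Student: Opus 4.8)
The first assertion is little more than a restatement of the Cuntz--Krieger relations. The plan is to use that $\Lambda$ is finite, so that $1_{C^*(\Lambda)}=\sum_{v\in\Lambda^0}s_v$, and then to apply \eqref{it:CK4} with $n=N$ to each summand, giving $s_v=\sum_{\lambda\in v\Lambda^{\leq N}}s_\lambda s_\lambda^*$. Since every path has a unique range, $\Lambda^{\leq N}=\bigsqcup_{v\in\Lambda^0}v\Lambda^{\leq N}$, and summing over $v\in\Lambda^0$ yields \eqref{part1.get}.

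For \eqref{part2.get}, fix $\lambda\in\Lambda^{\leq N}$ and put $v\mydef s(\lambda)$. The first step is to extract the dichotomy built into the definition of $\Lambda^{\leq N}$: for each $i\leq k$, if $d(\lambda)_i<|\Lambda^0|=N_i$ then $d(\lambda)+e_i\leq N$, so $s(\lambda)\Lambda^{e_i}=\emptyset$; thus for every $i$ either $d(\lambda)_i=|\Lambda^0|$ or $v\Lambda^{e_i}=\emptyset$. Set $I\mydef\{i\leq k:d(\lambda)_i=|\Lambda^0|\}$. If $I=\emptyset$ then $v\Lambda^{e_i}=\emptyset$ for all $i$, so $v$ itself is a (trivial) initial cycle on which $v$ lies, and we are done.

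Now suppose $I\neq\emptyset$. Using the factorisation property, write $\lambda=\alpha\beta$ with $d(\beta)=\sum_{i\in I}|\Lambda^0|e_i$; then $s(\beta)=v$ and every edge occurring in $\beta$ has colour in $I$. I would then proceed in three steps. (a) Show that $\beta(p)\Lambda^{e_j}=\emptyset$ for every $p\leq d(\beta)$ and every $j\notin I$: factor the terminal segment of $\beta$ from $\beta(p)$ to $v$ into edges and propagate sourcehood backwards one edge at a time, using the contrapositive of local convexity---if $f$ is an edge of some colour $i\in I$ with $s(f)\Lambda^{e_j}=\emptyset$ then $r(f)\Lambda^{e_j}=\emptyset$. (b) Apply pigeonhole to the $|\Lambda^0|+1$ vertices $\beta\bigl(m\sum_{i\in I}e_i\bigr)$, $m=0,\dots,|\Lambda^0|$, obtaining $a<b$ with $\beta\bigl(a\sum_{i\in I}e_i\bigr)=\beta\bigl(b\sum_{i\in I}e_i\bigr)$; then $\mu\mydef\beta\bigl(a\sum_{i\in I}e_i,\,b\sum_{i\in I}e_i\bigr)$ is a cycle whose degree is supported exactly on $I$, and by (a) its range is a source in every direction outside $I$, so $\mu$ is an initial cycle. (c) Consider the remaining segment $\eta\mydef\beta\bigl(b\sum_{i\in I}e_i,\,d(\beta)\bigr)$; it lies in $r(\mu)\Lambda$ and satisfies $d(\eta)\leq d(\mu^\infty)$, so since $\Lambda$ has no cycle with an entrance we must have $\eta=\mu^\infty(0,d(\eta))$, whence $v=s(\eta)=\mu^\infty(d(\eta))$; reducing $d(\eta)$ modulo $d(\mu)$ and invoking periodicity of $\mu^\infty$ then exhibits $v$ as $\mu(p)$ for some $p\leq d(\mu)$, i.e.\ $v$ lies on the initial cycle $\mu$.

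The bookkeeping in steps (a)--(c)---factorisations, the segment notation $\lambda(m,n)$, the pigeonhole count---should be routine. The point that needs the most care is step (c): it is not enough to produce \emph{some} initial cycle in the vicinity of $v$; one must place $v$ itself on the cycle, and this is precisely where the ``no cycle with an entrance'' hypothesis does its work, together with a short computation with the period of $\mu^\infty$. It is also worth noting that step (a) genuinely uses local convexity, since the entrance-free hypothesis says nothing about edges in directions outside $I$ (for such an edge $g$ with $r(g)=r(\mu)$ one has $d(g)=e_j\not\leq d(\mu^\infty)$, so it is invisible to the definition of an entrance).
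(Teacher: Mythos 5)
Your proof of part~\eqref{part1.get} is exactly the paper's: apply \eqref{it:CK4} with $n=N$ to each $s_v$ and sum over the decomposition $\Lambda^{\leq N}=\bigsqcup_{v\in\Lambda^0}v\Lambda^{\leq N}$ by range. For part~\eqref{part2.get} the paper gives no argument at all---it simply defers to the second paragraph of the proof of Proposition~5.9 of \cite{MR2920846}---so your self-contained argument is a genuine addition, and it is correct. Each link in the chain checks out: the definition of $\Lambda^{\leq N}$ gives, for each $i$, either $d(\lambda)_i=|\Lambda^0|$ or $s(\lambda)\Lambda^{e_i}=\emptyset$; the contrapositive of local convexity propagates sourcehood in each direction $j\notin I$ backwards along $\beta$, because every edge of $\beta$ has colour in $I$; pigeonhole on the $|\Lambda^0|+1$ vertices $\beta\bigl(m\sum_{i\in I}e_i\bigr)$ produces the cycle $\mu$, which is initial by step (a); and the no-entrance hypothesis forces $\eta=\mu^\infty(0,d(\eta))$, so that $v=s(\eta)=\mu^\infty(d(\eta))$. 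The only point you leave implicit is why $\mu^\infty(d(\eta))$ has the form $\mu(p)$ with $p\leq d(\mu)$: for an arbitrary $m\leq d(\mu^\infty)$ one cannot in general reduce $m$ modulo $d(\mu)$ within $\NN^k$, but here both $d(\eta)=(|\Lambda^0|-b)\sum_{i\in I}e_i$ and $d(\mu)=(b-a)\sum_{i\in I}e_i$ are multiples of $\sum_{i\in I}e_i$, so writing $|\Lambda^0|-b=q(b-a)+p'$ with $0\leq p'<b-a$ and using $\sigma^{d(\mu)}(\mu^\infty)=\mu^\infty$ gives $v=\mu^\infty\bigl(p'\sum_{i\in I}e_i\bigr)=\mu\bigl(p'\sum_{i\in I}e_i\bigr)$, as required. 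Your closing observation that local convexity is doing work the entrance-free hypothesis cannot do (edges of colour $j\notin I$ at $r(\mu)$ are invisible to the definition of an entrance) is also well taken.
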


\begin{proof}
For part~\eqref{part1.get} use $1=\sum_{v\in \Lambda^0}s_v=\sum_{v\in \Lambda^0}\sum_{\lambda\in
v\Lambda^{\leq N}}s_\lambda  s_\lambda^*=\sum_{\lambda\in \Lambda^{\leq N}}s_\lambda
s_\lambda^*.$ For part~\eqref{part2.get} we refer to the second paragraph of the proof of
\cite[Proposition 5.9]{MR2920846}.
\end{proof}

Recall that $(\mu^\infty)^0$ denotes the collection of all vertices on an initial cycle $\mu$. For
the terminology $\tau_{v}, v\in \Lambda^0$ in the following lemma see Lemma~\ref{lem.init2}.
\begin{lemma}
\label{matrix.units}
Let $\Lambda$ be a finite, locally convex $k$-graph that has no cycle with an
entrance. Fix an initial cycle $\mu\in \Lambda$.
Let $N\mydef(|\Lambda^0|,\dots, |\Lambda^0|)\in \NN^k$ and for each $\lambda,\nu\in \Lambda^{\leq
N}(\mu^\infty)^0$ set (using Lemma~\ref{lem.init2})
\begin{align*}
\theta_{\lambda,\nu}\mydef s_{\lambda\tau_{s(\lambda)}}s_{\nu\tau_{s(\nu)}}^*.
\end{align*}
Then the $\theta_{\lambda,\nu}$ are matrix units, i.e.,
$\theta_{\lambda,\nu}^*=\theta_{\nu,\lambda}$ and $\theta_{\lambda,\nu}\theta_{\gamma,
\eta}=\delta_{\nu,\gamma}\theta_{\lambda, \eta}$ in $C^*(\Lambda)$.
\end{lemma}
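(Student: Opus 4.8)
The plan is to reduce both matrix-unit identities to a single computation. The relation $\theta_{\lambda,\nu}^{*}=\theta_{\nu,\lambda}$ is immediate, since $(s_{a}s_{b}^{*})^{*}=s_{b}s_{a}^{*}$. For multiplicativity, write
\[
\theta_{\lambda,\nu}\theta_{\gamma,\eta}=s_{\lambda\tau_{s(\lambda)}}\bigl(s_{\nu\tau_{s(\nu)}}^{*}\,s_{\gamma\tau_{s(\gamma)}}\bigr)s_{\eta\tau_{s(\eta)}}^{*},
\]
so that everything rests on the key identity
\[
s_{\nu\tau_{s(\nu)}}^{*}\,s_{\gamma\tau_{s(\gamma)}}=\delta_{\nu,\gamma}\,s_{s(\mu)}\qquad\bigl(\nu,\gamma\in\Lambda^{\leq N}(\mu^{\infty})^{0}\bigr).
\]
Granting this: for $\nu=\gamma$ the product equals $s_{\lambda\tau_{s(\lambda)}}s_{s(\mu)}s_{\eta\tau_{s(\eta)}}^{*}$, and since Lemma~\ref{lem.init2}\eqref{part5.init} gives $s_{\tau_{v}}s_{s(\mu)}=s_{\tau_{v}}s_{\tau_{v}}^{*}s_{\tau_{v}}=s_{\tau_{v}}$ for $v\in(\mu^{\infty})^{0}$ --- hence $s_{\rho\tau_{v}}s_{s(\mu)}=s_{\rho\tau_{v}}$ and, taking adjoints, $s_{s(\mu)}s_{\rho\tau_{v}}^{*}=s_{\rho\tau_{v}}^{*}$ --- this collapses to $s_{\lambda\tau_{s(\lambda)}}s_{\eta\tau_{s(\eta)}}^{*}=\theta_{\lambda,\eta}$; for $\nu\neq\gamma$ it vanishes. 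Either way $\theta_{\lambda,\nu}\theta_{\gamma,\eta}=\delta_{\nu,\gamma}\theta_{\lambda,\eta}$.

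To prove the key identity, factor $s_{\nu\tau_{s(\nu)}}=s_{\nu}s_{\tau_{s(\nu)}}$ using \eqref{it:CK2} and $r(\tau_{s(\nu)})=s(\nu)$ from Lemma~\ref{lem.init2}\eqref{part1.init}, and similarly for $\gamma$, so that $s_{\nu\tau_{s(\nu)}}^{*}\,s_{\gamma\tau_{s(\gamma)}}=s_{\tau_{s(\nu)}}^{*}\,(s_{\nu}^{*}s_{\gamma})\,s_{\tau_{s(\gamma)}}$. When $\nu=\gamma$, \eqref{it:CK3} gives $s_{\nu}^{*}s_{\gamma}=s_{s(\nu)}$ and then Lemma~\ref{lem.init2}\eqref{part5.init} gives $s_{\tau_{s(\nu)}}^{*}s_{s(\nu)}s_{\tau_{s(\nu)}}=s_{\tau_{s(\nu)}}^{*}s_{\tau_{s(\nu)}}=s_{s(\mu)}$. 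So the whole statement reduces to showing that $\nu\neq\gamma$ forces $s_{\nu}^{*}s_{\gamma}=0$. For this I would use the product formula $s_{\nu}^{*}s_{\gamma}=\sum_{(\zeta,\eta)\in\Lambda^{\min}(\nu,\gamma)}s_{\zeta}s_{\eta}^{*}$ valid for row-finite locally convex $k$-graphs (see \cite{MR1961175}), where $\Lambda^{\min}(\nu,\gamma)$ consists of pairs with $\nu\zeta=\gamma\eta$; it then suffices to check that $\nu$ and $\gamma$ have no common extension, i.e.\ that no $\xi\in W_{\Lambda}$ has both $\nu$ and $\gamma$ as initial segments. This is clear if $r(\nu)\neq r(\gamma)$. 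If $r(\nu)=r(\gamma)$ and such a $\xi$ existed, then $\nu=\xi(0,d(\nu))$ and $\gamma=\xi(0,d(\gamma))$; if some coordinate had $d(\nu)_{i}<d(\gamma)_{i}$, then $d(\nu)+e_{i}\leq d(\xi)$, so $\xi\bigl(d(\nu),d(\nu)+e_{i}\bigr)$ would be an edge in $s(\nu)\Lambda^{e_{i}}$, while $d(\nu)+e_{i}\leq N$ (because $d(\nu)_{i}+1\leq d(\gamma)_{i}\leq N_{i}$ and $d(\nu)_{l}\leq N_{l}$ for $l\neq i$), contradicting $\nu\in\Lambda^{\leq N}$. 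Hence $d(\gamma)\leq d(\nu)$, and symmetrically $d(\nu)\leq d(\gamma)$, so $\nu=\xi(0,d(\nu))=\gamma$, a contradiction.

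The main obstacle I anticipate is exactly this last combinatorial point --- extracting from the definition of $\Lambda^{\leq N}$ with $N=(|\Lambda^{0}|,\dots,|\Lambda^{0}|)$ that distinct elements of $w\Lambda^{\leq N}$ admit no common extension --- together with invoking the product formula in its correct locally convex form; everything else is routine bookkeeping with the completing paths $\tau_{v}$ supplied by Lemma~\ref{lem.init2}. It is worth noting that the hypothesis that $\Lambda$ has no cycle with an entrance enters only through Lemma~\ref{lem.init2}, namely to make the $\tau_{v}$ available and to ensure $s(\tau_{v})=s(\mu)$.
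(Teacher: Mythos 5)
Your proof is correct and follows essentially the same strategy as the paper: both reduce everything to the single identity $s_{\nu\tau_{s(\nu)}}^*s_{\gamma\tau_{s(\gamma)}}=\delta_{\nu,\gamma}s_{s(\mu)}$. The only divergence is how the orthogonality $s_\nu^*s_\gamma=0$ for distinct $\nu,\gamma$ is obtained: the paper reads it off directly from relation \eqref{it:CK4}, which exhibits the projections $s_\alpha s_\alpha^*$ for $\alpha\in r(\nu)\Lambda^{\leq N}$ as summing to the projection $s_{r(\nu)}$, hence mutually orthogonal, so that $s_\nu^*s_\gamma=s_\nu^*(s_\nu s_\nu^*)(s_\gamma s_\gamma^*)s_\gamma=0$; your route via the product formula and the combinatorial check that distinct elements of $v\Lambda^{\leq N}$ admit no common extension is valid but does more work than is needed.
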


\begin{proof}
Firstly, note that each $\theta_{\lambda,\nu}$ makes sense because the source of $\lambda \in
\Lambda^{\leq N}(\mu^\infty)^0$ is a vertex on $\mu$ and $\tau_{s(\lambda)}$ is a path on the
initial cycle $\mu$ with range $s(\lambda)$.

Fix $\lambda, \nu,\gamma, \eta \in \Lambda^{\leq N}(\mu^\infty)^0$. Clearly
$\theta_{\lambda,\nu}^*=\theta_{\nu,\lambda}$. We claim that $\theta_{\lambda,\nu}\theta_{\gamma,
\eta}=\delta_{\nu,\gamma}\theta_{\lambda, \eta}$. To see this let $v\mydef r(\mu)$. Then
\begin{align}
\label{star.id}
s_{\nu\tau_{s(\nu)}}^*s_{\gamma\tau_{s(\gamma)}}=s_{\tau_{s(\nu)}}^*s_\nu^* s_\gamma s_{\tau_{s(\gamma)}}
\end{align}
is nonzero only if $r(\nu)=r(\gamma)$. Since $s_{r(\nu)} = \sum_{\alpha \in {r(\nu)}\Lambda^{\leq
N}} s_\alpha s_\alpha^*$, we have $(s_\nu s_\nu^*)(s_\gamma s_\gamma^*)=\delta_{\nu,\gamma}s_\nu
s_\nu^*$, so if \eqref{star.id} is nonzero, then $\nu=\gamma$ and then
\begin{align*}
s_{\nu\tau_{s(\nu)}}^*s_{\gamma\tau_{s(\gamma)}}=\delta_{\nu,\gamma}s_{\nu\tau_{s(\nu)}}^*s_{\gamma\tau_{s(\gamma)}}=\delta_{\nu,\gamma} s_{s(\nu\tau_{s(\nu)})}=\delta_{\nu,\gamma}s_{s(\tau_{s(\nu)})}=\delta_{\nu,\gamma}s_{v}.
\end{align*}
Hence $ \theta_{\lambda,\nu}\theta_{\gamma,
\eta}=s_{\lambda\tau_{s(\lambda)}}(\delta_{\nu,\gamma}s_{v})s_{\eta\tau_{s(\eta)}}^*=\delta_{\nu,\gamma}\theta_{\lambda,
\eta} $ as claimed.
\end{proof}
The next lemma is of general nature. See \cite[Lemma~3.3]{MR2520478} for a special case of this
result.

\begin{lemma}
\label{matrix.system} Suppose that $\{e_{ij}^{(k)} : 1\leq k\leq r, 1\leq i,j\leq n_k\}$ is a
system of matrix units in a unital $C^*$-algebra $A$, in the sense that
\begin{enumerate}
\item $e_{ij}^{(k)}e_{jl}^{(k)}=e_{il}^{(k)}$;
\item $e_{ij}^{(k)}e_{mn}^{(l)}=0$ if $k\neq l$ or if $j\neq m$;
\item $(e_{ij}^{(k)})^*=e_{ji}^{(k)}$; and
\item $\sum_{k=1}^r \sum_{i=1}^{n_k} e_{ii}^{(k)}=1$.
\end{enumerate}
For $k \leq r$ let $p^{(k)}\mydef \sum_{i=1}^{n_k} e_{ii}^{(k)}$. Suppose that for each $a\in A$,
$a=\sum_{k=1}^r p^{(k)}ap^{(k)}$. Then, for $1\leq k \leq r$, each $e_{11}^{(k)}$ is a projection
and $A\cong \bigoplus_{k=1}^{r} M_{n_k}(e_{11}^{(k)}Ae_{11}^{(k)})$.
\end{lemma}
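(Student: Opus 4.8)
The plan is to peel off two trivial observations and then reduce to the classical fact that a unital $C^*$-algebra carrying a full $n\times n$ system of matrix units is an $n\times n$ matrix algebra over a corner.

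\textbf{Step 1 (projections and a direct sum).} First I would note that each $e_{11}^{(k)}$ is a projection: axiom~(3) gives self-adjointness and axiom~(1) with $i=j=l=1$ gives idempotency. Similarly each $p^{(k)}=\sum_{i=1}^{n_k}e_{ii}^{(k)}$ is a projection, being a finite sum of mutually orthogonal projections by axioms~(1)--(3), and by axioms~(2) and~(4) the family $\{p^{(k)}:k\le r\}$ consists of mutually orthogonal projections with $\sum_{k=1}^r p^{(k)}=1$. Writing $A_k:=p^{(k)}Ap^{(k)}$, a hereditary sub-$C^*$-algebra with unit $p^{(k)}$, the hypothesis $a=\sum_k p^{(k)}ap^{(k)}$ says exactly that every $a\in A$ is the sum of its components $a_k:=p^{(k)}ap^{(k)}\in A_k$; combining this with orthogonality of the $p^{(l)}$ yields, for any $b\in A$, $b\,a_k=\big(\sum_l p^{(l)}bp^{(l)}\big)a_k=p^{(k)}bp^{(k)}a_k\in A_k$, and likewise $a_k b\in A_k$. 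So each $A_k$ is a closed two-sided ideal of $A$ and $A=\bigoplus_{k=1}^r A_k$ as $C^*$-algebras.

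\textbf{Step 2 (each summand is a matrix algebra).} Next, for a fixed $k$, I would run the standard matrix-units argument on $A_k$. Since $e_{11}^{(k)}\le p^{(k)}$ we have $e_{11}^{(k)}A_ke_{11}^{(k)}=e_{11}^{(k)}Ae_{11}^{(k)}$; call it $B_k$. The matrix-unit relations give $e_{11}^{(k)}e_{1i}^{(k)}=e_{1i}^{(k)}$ and $e_{j1}^{(k)}e_{11}^{(k)}=e_{j1}^{(k)}$, so the formula $\Phi_k(a):=\big(e_{1i}^{(k)}ae_{j1}^{(k)}\big)_{i,j=1}^{n_k}$ defines a map $A_k\to M_{n_k}(B_k)$, and I would check, using axioms~(1)--(3), that it is a $*$-homomorphism. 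I would then verify that $\Psi_k\big((b_{ij})_{ij}\big):=\sum_{i,j=1}^{n_k}e_{i1}^{(k)}b_{ij}e_{1j}^{(k)}$ is a two-sided inverse for $\Phi_k$ --- recovering $a$ from $\Phi_k(a)$ uses $\sum_{i=1}^{n_k}e_{ii}^{(k)}=p^{(k)}=1_{A_k}$, and recovering $(b_{ij})$ from $\Psi_k((b_{ij}))$ uses $e_{i1}^{(k)}e_{1j}^{(k)}=e_{ij}^{(k)}$ together with the orthogonality relations. Hence $A_k\cong M_{n_k}(B_k)$, and assembling these isomorphisms over $k\le r$ gives $A\cong\bigoplus_{k=1}^r M_{n_k}\big(e_{11}^{(k)}Ae_{11}^{(k)}\big)$.

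There is essentially no serious obstacle here: the only place the hypotheses genuinely bite is Step~1, where $a=\sum_k p^{(k)}ap^{(k)}$ is precisely what forces the corners $A_k$ to be ideals whose direct sum is $A$ (without it they need neither span $A$ nor be ideals). Step~2 is then a routine verification --- identical in spirit to \cite[Lemma~3.3]{MR2520478} --- that $\Phi_k$ and $\Psi_k$ are mutually inverse $*$-homomorphisms, which I would not grind through in full.
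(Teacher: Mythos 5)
Your proposal is correct and follows essentially the same route as the paper: decompose $A$ as the direct sum of the corners $p^{(k)}Ap^{(k)}$ using the hypothesis $a=\sum_k p^{(k)}ap^{(k)}$, then identify each corner with $M_{n_k}(e_{11}^{(k)}Ae_{11}^{(k)})$. The only cosmetic difference is that the paper delegates the second step to \cite[Lemma~3.3]{MR2520478} by exhibiting the partial isometries $v_i=e_{i1}^{(k)}$ with $v_i^*v_j=\delta_{ij}e_{11}^{(k)}$ and $\sum_i v_iv_i^*=p^{(k)}$, whereas you write out the mutually inverse maps $\Phi_k$ and $\Psi_k$ explicitly.
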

\begin{proof}
Clearly $A\cong \bigoplus_{k=1}^{r}p^{(k)}Ap^{(k)}$ via $a\mapsto (p^{(1)}ap^{(1)},\dots,
p^{(r)}ap^{(r)})$ and inverse $(a^{(1)}, \dots, a^{(r)})\mapsto \sum_{k=1}^r a^{(k)}$. Routine
calculations show that for each $k\in \{1,\dots,r\}$, the elements $v_i\mydef e_{i1}^{(k)}$, $i=1,
\dots, n_k$, satisfy
$$v_i^*v_j=\delta_{i,j}e_{11}^{(k)} \quad \text{for} \quad 1\leq i,j\leq n_k, \quad \text{and}\quad p^{(k)}=\sum_{i=1}^{n_k}v_iv_i^*.$$
By \cite[Lemma~3.3]{MR2520478}, $p^{(k)}Ap^{(k)}\cong M_{n_k}(e_{11}^{(k)}Ae_{11}^{(k)})$
completing the proof.
\end{proof}

We now characterise the structure of $k$-graph $C^*$-algebras $C^*(\Lambda)$ such that $\Lambda$
is finite and has no cycle with an entrance. For the notions of $\IC(\Lambda)$, $\sim$,
$(\mu^\infty)^0$, and $\ell_\mu$ see Section~\ref{init.cycles.intro}. We note that in
Theorem~\ref{direct.sum.thm}, $\IC(\Lambda)\neq \emptyset$ and $\ell_\mu = |\{i\leq k: d(\mu)_i>
0\}|$ (see Lemma~\ref{unit.sum}\eqref{part2.get} and Proposition~\ref{ell.mu}).

\begin{thm}{(Structure theorem)}
\label{direct.sum.thm} Let $\Lambda$ be a finite, locally convex $k$-graph that has no cycle with an
entrance. For $N\mydef(|\Lambda^0|,\dots,
|\Lambda^0|)\in \NN^k$,
$$C^*(\Lambda)\cong \bigoplus_{[\mu] \in \IC(\Lambda)/\sim} M_{\Lambda^{\leq N}(\mu^\infty)^0}(s_{r(\mu)}C^*(\Lambda)s_{r(\mu)}),$$
and each $s_{r(\mu)}C^*(\Lambda)s_{r(\mu)}\cong C(\TT^{\ell_\mu})$.
\end{thm}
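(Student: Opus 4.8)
The plan is to decompose $C^*(\Lambda)$ into a direct sum indexed by $\sim$-classes of initial cycles using the matrix-unit machinery already developed, and then identify each corner $s_{r(\mu)}C^*(\Lambda)s_{r(\mu)}$ with $C(\TT^{\ell_\mu})$.

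First I would assemble the global system of matrix units. By Lemma~\ref{unit.sum}\eqref{part1.get} we have $1 = \sum_{\lambda\in\Lambda^{\leq N}}s_\lambda s_\lambda^*$, and by Lemma~\ref{unit.sum}\eqref{part2.get} every $\lambda\in\Lambda^{\leq N}$ has $s(\lambda)$ on some initial cycle. So $\Lambda^{\leq N} = \bigsqcup_{[\mu]\in\IC(\Lambda)/\sim}\Lambda^{\leq N}(\mu^\infty)^0$, where the union is disjoint because distinct $\sim$-classes have disjoint vertex sets (this needs a short argument: if $s(\lambda)$ lay on two initial cycles $\mu,\nu$, then using Lemma~\ref{lem.init2}\eqref{part2.init}--\eqref{part4.init} one shows $(\mu^\infty)^0=(\nu^\infty)^0$, i.e.\ $\mu\sim\nu$; essentially paths cannot enter an initial cycle, so the cycle through $s(\lambda)$ is uniquely determined). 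For a fixed representative $\mu$ of each class, Lemma~\ref{matrix.units} gives matrix units $\theta_{\lambda,\nu}=s_{\lambda\tau_{s(\lambda)}}s_{\nu\tau_{s(\nu)}}^*$ indexed by $\Lambda^{\leq N}(\mu^\infty)^0$, with common range projection $s_{r(\mu)}$. I would check that $\sum_\lambda \theta_{\lambda,\lambda} = \sum_\lambda s_{\lambda\tau_{s(\lambda)}}s_{\lambda\tau_{s(\lambda)}}^* = \sum_\lambda s_\lambda s_\lambda^* $ over $\lambda\in\Lambda^{\leq N}(\mu^\infty)^0$ (using Lemma~\ref{lem.init2}\eqref{part5.init}, $s_{\tau_{s(\lambda)}}s_{\tau_{s(\lambda)}}^*=s_{s(\lambda)}=s_\lambda^*s_\lambda$), and that summing these class-projections over all $[\mu]$ recovers $1$ by Lemma~\ref{unit.sum}\eqref{part1.get}.

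Next I would verify the hypothesis of Lemma~\ref{matrix.system}, namely that $a = \sum_{[\mu]} p^{([\mu])}a p^{([\mu])}$ for all $a\in C^*(\Lambda)$, where $p^{([\mu])}=\sum_{\lambda\in\Lambda^{\leq N}(\mu^\infty)^0}s_\lambda s_\lambda^*$. It suffices to check this on the spanning elements $s_\alpha s_\beta^*$ with $s(\alpha)=s(\beta)$; one shows each $s_\alpha s_\beta^*$ is absorbed into a single summand, i.e.\ $p^{([\mu])}s_\alpha s_\beta^* p^{([\nu])}=0$ unless $[\mu]=[\nu]$, and $p^{([\mu])}s_\alpha s_\beta^* p^{([\mu])}=s_\alpha s_\beta^*$ when $s(\alpha)=s(\beta)$ lies in $(\mu^\infty)^0$ — this rests again on the ``no entrance'' structure: the only paths with range on the initial cycle are on the cycle, so refining $\alpha,\beta$ by paths of degree up to $N$ keeps everything within the same class. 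Then Lemma~\ref{matrix.system} yields $C^*(\Lambda)\cong\bigoplus_{[\mu]}M_{\Lambda^{\leq N}(\mu^\infty)^0}\big(s_{r(\mu)}C^*(\Lambda)s_{r(\mu)}\big)$, noting $e_{11}^{([\mu])}=\theta_{\lambda_0,\lambda_0}$ for the trivial path $\lambda_0=r(\mu)\in\Lambda^{\leq N}(\mu^\infty)^0$ equals $s_{\tau_{r(\mu)}}s_{\tau_{r(\mu)}}^*=s_{r(\mu)}$.

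Finally, to identify the corner $s_{r(\mu)}C^*(\Lambda)s_{r(\mu)}$ with $C(\TT^{\ell_\mu})$: if $\mu$ is a vertex (a source) then $\ell_\mu=0$ and $s_{r(\mu)}C^*(\Lambda)s_{r(\mu)}=\CC s_{r(\mu)}\cong\CC=C(\TT^0)$ since no paths of positive degree emanate from $r(\mu)$. Otherwise, one shows $s_{r(\mu)}C^*(\Lambda)s_{r(\mu)}$ is generated by the single unitary $u\mydef s_{\mu_v}$ where $v=r(\mu)$ and $\mu_v=\mu$: indeed $u^*u=s_{s(\mu)}=s_{r(\mu)}=uu^*$, and by Lemma~\ref{lem.init2}\eqref{part4.init} every path with range $v$ and degree $\leq d(\mu)$ lies on the cycle, so a Fourier-type argument (gauge action of $\TT^k$ factoring through $\TT^{\ell_\mu}$, with $\ell_\mu=\operatorname{rank}G_\mu = |\{i:d(\mu)_i>0\}|$) identifies the corner with the universal $C^*$-algebra of a unitary, i.e.\ $C(\TT)$ when $d(\mu)$ is supported on one coordinate, and more generally $C(\TT^{\ell_\mu})$ because the relations force $u^{d(\mu)^{(1)}}$-type identifications exactly encoding $\ZZ^k/G_\mu$; alternatively, and more cleanly, invoke that each $\sim$-class corner is a full corner of the $C^*$-algebra of the sub-$k$-graph on $(\mu^\infty)^0$, which is a single ``cycle'' $k$-graph known (e.g.\ via the coordinate-graph/Lemma~\ref{examples12} analysis, or \cite[Lemma~5.8]{MR2920846} giving $G_\mu\cong\ZZ^{\ell_\mu}$) to have $C^*$-algebra Morita equivalent to $C(\TT^{\ell_\mu})$, with the vertex corner being exactly $C(\TT^{\ell_\mu})$.

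\textbf{Main obstacle.} The routine bookkeeping is the disjointness of the index sets $\Lambda^{\leq N}(\mu^\infty)^0$ and the absorption identities $p^{([\mu])}s_\alpha s_\beta^* p^{([\nu])}=\delta_{[\mu],[\nu]}\,s_\alpha s_\beta^*$; these are where the ``no cycle with an entrance'' hypothesis is really used and where one must be careful with the $\Lambda^{\leq N}$ (rather than $\Lambda^N$) conventions and local convexity. The genuinely substantive step, though, is pinning down $s_{r(\mu)}C^*(\Lambda)s_{r(\mu)}\cong C(\TT^{\ell_\mu})$ on the nose — one must show the corner is generated by a \emph{single} normal element whose spectrum is $\TT^{\ell_\mu}$, which amounts to proving the sub-$k$-graph supported on an initial cycle is, after collapsing to its coordinate graphs and using the factorisation property, exactly the $k$-graph whose $C^*$-algebra is $M_{|(\mu^\infty)^0|}(C(\TT^{\ell_\mu}))$; the rank count $\ell_\mu=|\{i:d(\mu)_i>0\}|$ (Proposition~\ref{ell.mu}) is what makes the torus dimension come out right.
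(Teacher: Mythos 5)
Your decomposition argument follows the paper's proof essentially step for step: the same matrix units $\theta_{\lambda,\nu}$ from Lemma~\ref{matrix.units}, the same computation $\theta_{\lambda,\lambda}=s_\lambda s_\lambda^*$ via Lemma~\ref{lem.init2}\eqref{part5.init}, the same verification of the hypothesis of Lemma~\ref{matrix.system} on the spanning elements $s_\alpha s_\beta^*$ using Lemma~\ref{unit.sum}\eqref{part2.get}, and the same conclusion. One small correction in that part: the vertex $r(\mu)$ is \emph{not} an element of $\Lambda^{\leq N}(\mu^\infty)^0$ unless $\mu$ is a trivial initial cycle, since a degree-zero path $v$ belongs to $\Lambda^{\leq N}$ only if $v\Lambda^{e_i}=\emptyset$ for every $i$. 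The statement you actually need (and the one the paper uses) is that $r(\mu)\Lambda^{\leq N}$ is a singleton $\{\lambda_\mu\}$ by Lemma~\ref{lem.init2}\eqref{part3.init}--\eqref{part4.init}, whence $\theta_{\lambda_\mu,\lambda_\mu}=s_{\lambda_\mu}s_{\lambda_\mu}^*=s_{r(\mu)}$ by \eqref{it:CK4}; so $s_{r(\mu)}$ is still one of the diagonal matrix units and the corner identification survives.

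The more substantive problem is your primary strategy for showing $s_{r(\mu)}C^*(\Lambda)s_{r(\mu)}\cong C(\TT^{\ell_\mu})$. The claim that the corner is generated by the single unitary $s_{\mu_v}$, or by ``a single normal element whose spectrum is $\TT^{\ell_\mu}$,'' cannot work when $\ell_\mu\geq 2$: the unitary $s_{\mu_v}$ corresponds to the single group element $d(\mu)\in G_\mu\cong\ZZ^{\ell_\mu}$ and so generates only a copy of $C(\TT)$, and no normal element of any $C^*$-algebra has spectrum homeomorphic to $\TT^{\ell}$ for $\ell\geq 2$, since spectra of normal elements are compact subsets of $\CC$ and $\TT^{\ell}$ does not embed in the plane. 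The argument that does work (and is what the proof of \cite[Proposition~5.9]{MR2920846}, which the paper simply cites at this point, carries out) is that the corner is the closed span of the partial unitaries $s_{\mu^\infty(0,m)}s^*_{\mu^\infty(0,n)}$ with $\mu^\infty(m)=\mu^\infty(n)=r(\mu)$; these multiply according to the group law on $G_\mu$, so the corner is a quotient of $C^*(G_\mu)\cong C(\TT^{\ell_\mu})$, and the gauge action shows the quotient map is faithful. Your fallback via the sub-$k$-graph on $(\mu^\infty)^0$ and Morita equivalence points in the right direction, but as written it invokes precisely the fact to be proved rather than proving it.
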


\begin{proof}
Evidently $\IC(\Lambda)/\smallspace\sim$ is finite since $\Lambda^0$ is finite. Let $I$ be a
maximal collection of initial cycles satisfying $(\mu^\infty)^0\cap (\nu^\infty)^0=\emptyset$ for
any $\mu\neq \nu\in I$. For each initial cycle $\mu\in I$ let $\{\theta_{\lambda,\nu}^{(\mu)}\}$
be the matrix units of Lemma~\ref{matrix.units}. We first prove that
$\{\theta_{\lambda,\nu}^{(\mu)}: \mu\in I \}$ is a system of matrix units, i.e.,
\begin{enumerate}
\item \label{m.one}
    $\theta_{\lambda\lambda'}^{(\mu)}\theta_{\lambda'\lambda''}^{(\mu)}=\theta_{\lambda\lambda''}^{(\mu)}$;
\item  \label{m.two} $\theta_{\lambda\lambda'}^{(\mu)}\theta_{\eta\eta'}^{(\nu)}=0$ if $\mu\neq
    \nu$ or if $\lambda'\neq \eta$;
\item  \label{m.three}  $(\theta_{\lambda\lambda'}^{(\mu)})^*=\theta_{\lambda'\lambda}^{(\mu)}$;
    and
\item \label{m.four}  $\sum_{\mu\in I}\sum_{\lambda \in \Lambda^{\leq N}(\mu^\infty)^0}
    \theta_{\lambda\lambda}^{(\mu)}=1$.
\end{enumerate}
We start with property \eqref{m.four}. Fix $\mu\in I$ and $\lambda\in \Lambda^{\leq
N}(\mu^\infty)^0$. Using Lemma~\ref{lem.init2}\eqref{part5.init} we have
$s_{\tau_{s(\lambda)}}s_{\tau_{s(\lambda)}}^*=s_{s(\lambda)}$. Hence
$\theta_{\lambda,\lambda}^{(\mu)}=
s_{\lambda\tau_{s(\lambda)}}s_{\lambda\tau_{s(\lambda)}}^*=s_\lambda s_\lambda^*$.
Lemma~\ref{unit.sum}\eqref{part1.get} now gives
\begin{equation}
\label{star.id2}
1=\sum_{\lambda\in \Lambda^{\leq N}}s_\lambda  s_\lambda^*=\sum_{\mu\in I} \sum_{\lambda\in \Lambda^{\leq N}(\mu^\infty)^0}s_\lambda  s_\lambda^*=\sum_{\mu\in I}\sum_{\lambda \in \Lambda^{\leq N}(\mu^\infty)^0} \theta_{\lambda\lambda}^{(\mu)}.
\end{equation}
Properties \eqref{m.one}--\eqref{m.three} follow from Lemma~\ref{unit.sum} and that
$\theta_{\lambda\lambda}^{(\mu)}\theta_{\lambda\lambda}^{(\nu)}=0$  whenever $\mu\neq \nu$ in $I$
(the latter is a consequence of property \eqref{m.four}).

For each $\mu \in I$, define $p^{(\mu)}\mydef \sum_{\lambda \in \Lambda^{\leq N}(\mu^\infty)^0}
\theta_{\lambda\lambda}^{(\mu)}$. Then~\eqref{star.id2} gives $\sum_{\mu\in I} p^{(\mu)}=1$. We
claim that
\[\textstyle
    A\mydef \big\{a\in C^*(\Lambda) : a=\sum_{\mu\in I} p^{(\mu)}ap^{(\mu)}\big\}
\]
is all of $C^*(\Lambda)$. Clearly $A$ is a closed linear subspace of $C^*(\Lambda)$. Fix $\alpha,
\beta\in \Lambda^{\leq N}$ such that $s(\alpha)=s(\beta)$. Using
Lemma~\ref{unit.sum}\eqref{part2.get} it follows that $s(\alpha)\in (\mu^\infty)^0$ for some
$\mu\in I$. Since $s_\alpha s_\alpha^*\leq p^{(\mu)}$ we get
\begin{equation*}
s_\alpha =p^{(\mu)}s_\alpha s_\alpha^* s_\alpha =p^{(\mu)}s_\alpha, \quad \text{and}\quad s_\alpha s_\beta^*=p^{(\mu)}s_\alpha s_\beta^*p^{(\mu)},
\end{equation*}
so $s_\alpha s_\beta^*\in A$. Using that $\espan\{s_\alpha s_\beta^*: \alpha, \beta\in
\Lambda^{\leq N}, s(\alpha)=s(\beta)\}$ is dense in  $C^*(\Lambda)$ we get $A=C^*(\Lambda)$ as
claimed.

For each $\mu \in I$, Lemma~\ref{lem.init2}\eqref{part3.init}--\eqref{part4.init} implies that
$r(\mu)\Lambda^{\leq N}$ contains exactly one path which we denote by $\lambda_\mu$. As in the
proof of  \eqref{m.four}, we have $\theta_{\lambda_\mu,\lambda_\mu}^{(\mu)}=s_{\lambda_\mu}
s_{\lambda_\mu} ^*$, so $s_{r(\mu)}= \sum_{\lambda \in r(\mu)\Lambda^{\leq N}} s_\lambda
s_\lambda^*=\theta_{\lambda_\mu,\lambda_\mu}^{(\mu)}$. Identifying $I$ with
$\IC(\Lambda)/\smallspace\sim$ via the map $\mu\mapsto [\mu]$, Lemma~\ref{matrix.system} provides
an isomorphism
$$C^*(\Lambda)\cong \bigoplus_{\mu \in \IC(\Lambda)/\sim} M_{\Lambda^{\leq N}(\mu^\infty)^0}(s_{r(\mu)}C^*(\Lambda)s_{r(\mu)}).$$
To see that each $s_{r(\mu)}C^*(\Lambda)s_{r(\mu)}\cong C(\TT^{\ell_\mu})$, see the proof of
\cite[Proposition~5.9]{MR2920846}.
\end{proof}

The main result of this section is the characterisation of stable rank for $k$-graph
$C^*$-algebras $C^*(\Lambda)$ such that $\Lambda$ is finite and has no cycle with an entrance.
Recall the notion of the floor and ceiling functions: for $x \in \mathbb{R}$, we write $\lfloor
x\rfloor \mydef \max\{n \in \mathbb{Z} : n \le x\}$ and $\lceil x\rceil \mydef \min\{n \in
\mathbb{Z} : n \ge x\}$.

\begin{thm}
\label{stable.rank.thm} Let $\Lambda$ be a finite, locally convex $k$-graph that
has no cycle with an entrance. For $N\mydef(|\Lambda^0|,\dots, |\Lambda^0|)\in
\NN^k$,
$$sr(C^*(\Lambda))=\max_{[\mu] \in \IC(\Lambda)/\sim} \left\lceil \frac{\left\lfloor \frac{\ell_\mu}{2}\right\rfloor}{|\Lambda^{\leq N}(\mu^\infty)^0|}\right\rceil+1.$$
\end{thm}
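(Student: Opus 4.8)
The plan is to read this off from the structure theorem, Theorem~\ref{direct.sum.thm}, combined with the computations of stable rank for tori, matrix algebras and direct sums recalled in Section~\ref{sec.1.3}. First I would apply Theorem~\ref{direct.sum.thm} to write
\[
C^*(\Lambda)\cong \bigoplus_{[\mu] \in \IC(\Lambda)/\sim} M_{n_\mu}\big(C(\TT^{\ell_\mu})\big),\qquad n_\mu\mydef |\Lambda^{\leq N}(\mu^\infty)^0|,
\]
noting that $\IC(\Lambda)/\smallspace\sim$ is finite (since $\Lambda^0$ is) and nonempty (by Lemma~\ref{unit.sum}), and that $n_\mu\geq 1$ for every $[\mu]$, since $r(\mu)\Lambda^{\leq N}$ is nonempty.

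Next I would compute the stable rank of a single summand. By fact~\eqref{abelian.result}, $sr(C(\TT^{\ell_\mu})) = \lfloor \ell_\mu/2\rfloor + 1$, so by fact~\eqref{matrix.result},
\[
sr\big(M_{n_\mu}(C(\TT^{\ell_\mu}))\big) = \left\lceil \frac{sr(C(\TT^{\ell_\mu}))-1}{n_\mu}\right\rceil + 1 = \left\lceil \frac{\lfloor \ell_\mu/2\rfloor}{n_\mu}\right\rceil + 1.
\]
Finally, fact~\eqref{direct.sum}, applied inductively over the finitely many summands, gives that $sr(C^*(\Lambda))$ is the maximum of these quantities over $[\mu]\in\IC(\Lambda)/\sim$, which is exactly the claimed formula.

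There is no real obstacle in this argument: all the substantive work has already been done in Theorem~\ref{direct.sum.thm}. The only points to keep track of are that the index set of the direct sum is genuinely $\IC(\Lambda)/\sim$ (this is built into the statement of Theorem~\ref{direct.sum.thm}), that it is nonempty so the maximum is taken over a nonempty set, and that each $n_\mu$ is a positive integer so that $\lceil \lfloor \ell_\mu/2\rfloor/n_\mu\rceil$ is well defined. If one wants a ``hard part'' to point to in this section, it is Theorem~\ref{direct.sum.thm} itself rather than this stable-rank computation.
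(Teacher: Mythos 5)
Your proposal is correct and follows essentially the same route as the paper's proof: apply Theorem~\ref{direct.sum.thm} and then combine facts \eqref{abelian.result}, \eqref{matrix.result} and \eqref{direct.sum} from Section~\ref{sec.1.3}. The small additional remarks you make (finiteness and nonemptiness of $\IC(\Lambda)/\sim$ and positivity of each $n_\mu$) are sensible bookkeeping and do not change the argument.
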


\begin{proof}
By Theorem~\ref{direct.sum.thm} and property \eqref{direct.sum} from Section \ref{sec.1.3},
$$sr(C^*(\Lambda))= \max_{[\mu] \in \IC(\Lambda)/\sim} M_{\Lambda^{\leq N}(\mu^\infty)^0}(s_{r(\mu)}C^*(\Lambda)s_{r(\mu)}),$$ and each
$s_{r(\mu)}C^*(\Lambda)s_{r(\mu)}\cong C(\TT^{\ell_\mu})$. Now using property
\eqref{matrix.result} from Section \ref{sec.1.3}, we get
$$sr(C^*(\Lambda))= \max_{[\mu] \in \IC(\Lambda)/\sim} \left\lceil {\frac{sr(s_{r(\mu)}C^*(\Lambda)s_{r(\mu)})-1}{{\Lambda^{\leq N}(\mu^\infty)^0}}}\right\rceil+1.$$
Finally, property \eqref{abelian.result} from Section \ref{sec.1.3}, gives
$sr(s_{r(\mu)}C^*(\Lambda)s_{r(\mu)}) -1=\left\lfloor {\ell_\mu/2}\right\rfloor$ for each $[\mu]
\in \IC(\Lambda)/\smallspace\sim$, completing the proof.
\end{proof}

Our next Proposition~\ref{stably.finite} characterises stable finiteness of $C^*$-algebras of
finite, locally convex $k$-graphs. For other results on stable finiteness of $C^*$-algebras
associated to row-finite $k$-graphs with no sources, see \cite{MR3507995, MR2920846}. Note that
the $C^*$-algebras satisfying the hypotheses of Proposition~\ref{stably.finite} are exactly those
shown in Figure~\ref{fbuha} in boxes 3~and~4.

We briefly introduce relevant terminology. Following \cite{MR2153156} we write $\MCE(\mu,
\nu)\mydef \mu\Lambda \cap \nu\Lambda\cap \Lambda^{d(\mu)\vee d(\nu)}$ for the set of all minimal
common extensions of $\mu,\nu\in \Lambda$. The cycle $\lambda$ is a \emph{cycle with an entrance
in the sense of \cite[Definition~3.5]{MR2920846}} if there exists a path $\tau\in
r(\lambda)\Lambda$ such that $\MCE(\tau, \lambda)=\emptyset$\footnote{Formally, if $\lambda$ is a
cycle, then $(\lambda,r(\lambda))$ is a generalised cycle in the sense of
\cite[Definition~3.1]{MR2920846}, and an entrance to $(\lambda, r(\lambda))$ is a path $\tau \in
s(r(\lambda))\Lambda$ such that $\MCE(r(\lambda)\tau, \lambda) = \emptyset$.}.
\begin{prop}
\label{stably.finite} Let $\Lambda$ be a finite, locally convex $k$-graph. With notation as above,
the following are equivalent:
\begin{enumerate}
\item \label{inf.1} $\Lambda$ has a cycle $\mu$ with an entrance;
\item \label{inf.2} $\Lambda$ has a cycle $\mu$ with an entrance in the sense of
    \cite[Definition~3.5]{MR2920846};
\item \label{inf.4} $C^*(\Lambda)$ contains an infinite projection; and
\item \label{inf.5} $C^*(\Lambda)$ is not stably finite.
\end{enumerate}
\end{prop}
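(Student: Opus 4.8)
The plan is to establish the cycle of implications $\eqref{inf.2}\Rightarrow\eqref{inf.4}\Rightarrow\eqref{inf.5}\Rightarrow\eqref{inf.1}\Rightarrow\eqref{inf.2}$, though in practice it is cleanest to prove $\eqref{inf.1}\Leftrightarrow\eqref{inf.2}$ first as a purely combinatorial statement about the $k$-graph, and then link the combinatorial side to the $C^*$-algebraic side via $\eqref{inf.2}\Rightarrow\eqref{inf.4}\Rightarrow\eqref{inf.5}$ and finally close the loop with $\eqref{inf.5}\Rightarrow\eqref{inf.1}$, the last implication being the contrapositive of what we already know from Theorem~\ref{direct.sum.thm}.

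For $\eqref{inf.1}\Leftrightarrow\eqref{inf.2}$: suppose $\lambda$ is a cycle with an entrance in the sense of this paper, so there is $\tau\in r(\lambda)\Lambda$ with $d(\tau)\le d(\lambda^\infty)$ and $\tau\ne\lambda^\infty(0,d(\tau))$. I would unpack the definition of $\MCE(\tau,\lambda)$: any element of $\MCE(\tau,\lambda)$ has degree $d(\tau)\vee d(\lambda)$ and extends both $\tau$ and $\lambda$; using the factorisation property and the fact that $\lambda^\infty$ is the unique path of its degree built from $\lambda$, one checks that an extension of $\lambda$ of that degree must agree with $\lambda^\infty$ on the first $d(\tau)\wedge d(\lambda^\infty)$ coordinates, which forces $\tau=\lambda^\infty(0,d(\tau))$, a contradiction; hence $\MCE(\tau,\lambda)=\emptyset$. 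For the converse, if $\MCE(r(\lambda)\tau,\lambda)=\emptyset$ for some $\tau$, one produces a witnessing path of degree $\le d(\lambda^\infty)$ that disagrees with $\lambda^\infty$ by truncating $\tau$ appropriately and arguing that if it agreed with $\lambda^\infty$ everywhere it would extend to a common extension with $\lambda$. This bookkeeping with degrees, $\vee$, $\wedge$ and the factorisation property is the routine-but-fiddly part; care is needed because $d(\lambda)$ need not be supported on all coordinates.

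For $\eqref{inf.2}\Rightarrow\eqref{inf.4}$: this is the standard argument that a cycle with an entrance produces an infinite projection. Given $\lambda$ a cycle with entrance $\tau$ (with $\MCE(r(\lambda)\tau,\lambda)=\emptyset$), consider $s_{r(\lambda)}$ and the subprojection $s_\lambda s_\lambda^* \le s_{r(\lambda)}$. The Cuntz--Krieger relation \eqref{it:CK4} applied at $r(\lambda)=s(\lambda)$ for a suitably large $n$, together with the emptiness of the relevant $\MCE$, gives that $s_{r(\lambda)\tau}s_{r(\lambda)\tau}^*$ and $s_\lambda s_\lambda^*$ are orthogonal subprojections of $s_{r(\lambda)}$, while $s_\lambda^* s_\lambda = s_{s(\lambda)} = s_{r(\lambda)}$ shows $s_\lambda$ is an isometry in $s_{r(\lambda)}C^*(\Lambda)s_{r(\lambda)}$ whose range projection is properly smaller than $s_{r(\lambda)}$ (the defect containing $s_{r(\lambda)\tau}s_{r(\lambda)\tau}^*\ne 0$). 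Hence $s_{r(\lambda)}$ is an infinite projection. One should double-check that $s_{r(\lambda)\tau}\ne 0$, which follows from row-finiteness/local convexity and the existence of $\Lambda^{\le n}$ decompositions. Then $\eqref{inf.4}\Rightarrow\eqref{inf.5}$ is immediate: an infinite projection in $C^*(\Lambda)\subseteq M_n(C^*(\Lambda))$ witnesses that some matrix amplification is not finite (indeed $C^*(\Lambda)$ itself is already not finite since $1_{C^*(\Lambda)}$ dominates an infinite projection and a unital $C^*$-algebra with an infinite subprojection of the unit is infinite).

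Finally $\eqref{inf.5}\Rightarrow\eqref{inf.1}$: I would prove the contrapositive. If $\Lambda$ has no cycle with an entrance, then Theorem~\ref{direct.sum.thm} gives $C^*(\Lambda)\cong\bigoplus_{[\mu]} M_{n_\mu}(C(\TT^{\ell_\mu}))$, a finite direct sum of matrix algebras over commutative $C^*$-algebras; each such summand is stably finite (commutative $C^*$-algebras are stably finite, and stable finiteness passes to matrix algebras and finite direct sums), so $C^*(\Lambda)$ is stably finite. The main obstacle in the whole argument is the combinatorial equivalence $\eqref{inf.1}\Leftrightarrow\eqref{inf.2}$ — reconciling the two notions of ``entrance'' requires a careful comparison of the truncation-based definition with the minimal-common-extension definition, and one must be attentive to coordinates $i$ with $d(\lambda)_i=0$, where $\lambda^\infty$ has finite (zero) extent and the locally convex hypothesis is what keeps everything well-behaved.
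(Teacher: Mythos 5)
Your overall architecture matches the paper's: close the loop via \eqref{inf.2}$\Rightarrow$\eqref{inf.4}$\Rightarrow$\eqref{inf.5}$\Rightarrow$\eqref{inf.1}$\Rightarrow$\eqref{inf.2}, with \eqref{inf.4}$\Rightarrow$\eqref{inf.5} being standard and \eqref{inf.5}$\Rightarrow$\eqref{inf.1} the contrapositive via Theorem~\ref{direct.sum.thm}. The problem is in your argument for \eqref{inf.1}$\Rightarrow$\eqref{inf.2}. You claim that if $\tau\in r(\lambda)\Lambda$ witnesses that $\lambda$ is a cycle with an entrance (so $d(\tau)\le d(\lambda^\infty)$ and $\tau\ne\lambda^\infty(0,d(\tau))$), then $\MCE(\tau,\lambda)=\emptyset$, on the grounds that any common extension of $\lambda$ of degree $d(\tau)\vee d(\lambda)$ ``must agree with $\lambda^\infty$ on the first $d(\tau)$ coordinates.'' That step is false: an extension of $\lambda$ is only constrained to agree with $\lambda$ up to degree $d(\lambda)$, and beyond that it may leave the cycle entirely ($\lambda^\infty$ is the unique path \emph{built by concatenating copies of} $\lambda$, not the unique path of its degree with range $r(\lambda)$ --- assuming the latter is exactly assuming $\lambda$ has no entrance). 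Concretely, take the $1$-graph with a loop $\mu$ of degree $1$ at $v$ and an extra edge $g$ with $r(g)=v$, $s(g)=w\ne v$, and take $\tau=\mu g$: then $d(\tau)=2\le d(\mu^\infty)$ and $\tau\ne\mu^2=\mu^\infty(0,2)$, so $\tau$ is an entrance to $\mu$ in the sense of Definition~\ref{in.cycle}'s ambient notion, yet $\MCE(\tau,\mu)=\{\tau\}\ne\emptyset$. So the conclusion you want does not hold for the pair $(\tau,\lambda)$ itself.

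The fix --- and this is the one real idea in that implication, which the paper uses --- is to replace $\lambda$ by a power: choose $n\ge 1$ with $n\,d(\lambda)\ge d(\tau)$. Then $\lambda^n$ is still a cycle, $d(\lambda^n)\vee d(\tau)=d(\lambda^n)$, so $\MCE(\tau,\lambda^n)=\lambda^n\Lambda\cap\tau\Lambda\cap\Lambda^{d(\lambda^n)}\subseteq\{\lambda^n\}\cap\tau\Lambda$, and this is empty because $\lambda^n(0,d(\tau))=\lambda^\infty(0,d(\tau))\ne\tau$. This exhibits a (possibly different) cycle with an entrance in the sense of \cite[Definition~3.5]{MR2920846}, which is all that \eqref{inf.2} asks for. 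The remainder of your proposal is fine: \eqref{inf.2}$\Rightarrow$\eqref{inf.4} is the standard isometry-with-orthogonal-defect argument (the paper simply cites \cite[Corollary~3.8]{MR2920846}), \eqref{inf.4}$\Rightarrow$\eqref{inf.5} is correct as you state it, and \eqref{inf.5}$\Rightarrow$\eqref{inf.1} via the structure theorem is exactly the paper's argument. Note also that your separate \eqref{inf.2}$\Rightarrow$\eqref{inf.1} direction is not needed to prove the proposition (the paper relegates it to a remark, where it does require local convexity and a genuine argument), so you can drop it.
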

\begin{proof}
To prove \eqref{inf.1}$\Rightarrow$\eqref{inf.2} let $\mu$ be a cycle with an entrance $\tau$, so
$\tau\in r(\mu)\Lambda$ satisfies $d(\tau)\leq d(\mu^\infty)$ and $\tau\neq
\mu^\infty(0,d(\tau))$. Fix $n\geq 1$ such that $nd(\mu)\geq d(\tau)$. Clearly $\tau\in
r(\mu^n)\Lambda$.  Then $\tau\neq \mu^\infty(0,d(\tau))=\mu^n(0,d(\tau))$, so $\MCE(\mu^n, \tau)=
(\mu^n\Lambda \cap \Lambda^{d(\mu^n)\vee d(\tau)})\cap \tau\Lambda\subseteq \{\mu^n\} \cap
\tau\Lambda=\emptyset$. For the proof of \eqref{inf.2}$\Rightarrow$\eqref{inf.4}, see
\cite[Corollary~3.8]{MR2920846}.

The implication \eqref{inf.4}$\Rightarrow$\eqref{inf.5} follows from
\cite[Lemma~5.1.2]{MR1783408}. It remains to prove \eqref{inf.5}$\Rightarrow$\eqref{inf.1}.  We
establish the contrapositive.  Suppose that condition \eqref{inf.1} does not hold, that is
$\Lambda$ has no cycle with an entrance. Theorem~\ref{direct.sum.thm} gives that
$C^*(\Lambda)$ is isomorphic to a direct sum of matrix algebras over commutative $C^*$-algebras,
hence  stably finite, so condition~\eqref{inf.5} does not hold.
\end{proof}

\begin{remark}
Our main results are for finite $k$-graphs so Proposition~\ref{stably.finite} is stated in that
context, but some of the implications hold more generally.
\begin{enumerate}
\item Only the proof of \mbox{\eqref{inf.5}\;${\implies}$\;\eqref{inf.1}} uses that $\Lambda$ is
    finite. The proofs of the implications \eqref{inf.1} ${\implies}$ \eqref{inf.2} ${\implies}$
    \eqref{inf.4} ${\implies}$ \eqref{inf.5} are valid for any locally convex row-finite
    $k$-graph.
\item Similarly, while \eqref{inf.1}~and~\eqref{inf.2} are not equivalent for arbitrary
    $k$-graphs, they \emph{are} equivalent for locally convex $k$-graphs, whether finite or not.
    To see this, suppose that $\lambda$ is a cycle in a locally convex $k$-graph and $\tau \in
    r(\lambda)\Lambda$ satisfies $\MCE(\lambda,\tau) = \emptyset$. Let $I = \{i \le k :
    d(\lambda)_i > 0\}$, let $m_I = \sum_{i \in I} d(\tau)_i e_i$ and $m' := d(\tau) - m_I$, and
    factorise $\tau = \tau_I \tau'$ with $d(\tau_I) = m_I$. If $\tau_I \not= (\lambda^\infty)(0,
    m_I)$, then $\lambda$ is a cycle with an entrance as required, so we may assume that $\tau_I
    = (\lambda^\infty)(0,m_I)$. So replacing $\lambda$ with $(\lambda^\infty)(m_I, m_I +
    d(\lambda))$ and $\tau$ with $\tau'$ we may assume that $d(\tau) \wedge d(\lambda) = 0$.
    Since $\Lambda$ is locally convex, a quick inductive argument shows that there exists $\mu
    \in s(\tau)\Lambda^{d(\lambda)} \not= \emptyset$. Factorise $\tau\mu = \alpha\beta$ with
    $d(\alpha) = d(\mu) = d(\lambda)$. Since $\MCE(\tau,\lambda) = \emptyset$, we must have
    $\alpha \not= \lambda$ and in particular $d(\alpha) = d(\lambda) < d(\lambda^\infty)$ and
    $\alpha \not= (\lambda^\infty)(0, d(\alpha))$. So once again $\lambda$ is a cycle with an
    entrance.
\end{enumerate}
\end{remark}

\begin{cor}
\label{stab.fin} Let $\Lambda$ be a finite, locally convex $k$-graph. Suppose that $\Lambda$ has
no cycle with an entrance (i.e., $C^*(\Lambda)$ is stably finite). For $N\mydef(|\Lambda^0|,\dots,
|\Lambda^0|)\in \NN^k$,
$$sr(C^*(\Lambda))=\max_{[\mu] \in \IC(\Lambda)/\sim} \left\lceil \frac{\left\lfloor \frac{\ell_\mu}{2}\right\rfloor}{|\Lambda^{\leq N}(\mu^\infty)^0|}\right\rceil+1.$$
\end{cor}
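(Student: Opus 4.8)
The plan is to deduce Corollary~\ref{stab.fin} directly from Theorem~\ref{stable.rank.thm}, which is already the exact formula we want; the only thing left to do is to reconcile the hypothesis of Corollary~\ref{stab.fin}---phrased as ``$\Lambda$ has no cycle with an entrance''---with the parenthetical remark that this is the same as saying $C^*(\Lambda)$ is stably finite. First I would invoke Proposition~\ref{stably.finite}, specifically the equivalence \eqref{inf.1}$\Leftrightarrow$\eqref{inf.5}, to observe that for a finite, locally convex $k$-graph $\Lambda$, the condition that $\Lambda$ has no cycle with an entrance is genuinely equivalent to stable finiteness of $C^*(\Lambda)$; this justifies the parenthetical aside and shows the two formulations of the hypothesis coincide.

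Having pinned down the hypothesis, the remaining step is essentially immediate: under the assumption that $\Lambda$ has no cycle with an entrance, Theorem~\ref{stable.rank.thm} applies verbatim and gives
$$sr(C^*(\Lambda))=\max_{[\mu] \in \IC(\Lambda)/\sim} \left\lceil \frac{\left\lfloor \frac{\ell_\mu}{2}\right\rfloor}{|\Lambda^{\leq N}(\mu^\infty)^0|}\right\rceil+1,$$
which is exactly the claimed identity. So the body of the proof is just: ``By Proposition~\ref{stably.finite}, the hypothesis that $\Lambda$ has no cycle with an entrance is equivalent to $C^*(\Lambda)$ being stably finite. The formula now follows from Theorem~\ref{stable.rank.thm}.''

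There is essentially no obstacle here---Corollary~\ref{stab.fin} is a restatement of Theorem~\ref{stable.rank.thm} with the hypothesis recast in the language of stable finiteness, and the recasting is licensed by Proposition~\ref{stably.finite}. If anything, the only point worth a sentence of care is making sure that all the ingredients feeding into Theorem~\ref{stable.rank.thm}---namely that $\IC(\Lambda)\neq\emptyset$ (Lemma~\ref{unit.sum}\eqref{part2.get}) and that $\ell_\mu = |\{i\leq k: d(\mu)_i> 0\}|$ (Proposition~\ref{ell.mu})---are available, so that the right-hand side of the displayed formula is well-defined and nonvacuous; but these are already subsumed in the statement of Theorem~\ref{stable.rank.thm} and need not be re-derived.
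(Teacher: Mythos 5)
Your proposal is correct and matches the paper's intent exactly: Corollary~\ref{stab.fin} is just Theorem~\ref{stable.rank.thm} with the hypothesis rephrased via the equivalence \eqref{inf.1}$\Leftrightarrow$\eqref{inf.5} of Proposition~\ref{stably.finite}, and the paper accordingly gives no separate argument. Nothing further is needed.
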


 \begin{remark}
A cycle with an incoming edge may fail to be a cycle with an entrance. This is for example the
case for any of the red (dashed) cycles in Figure~\ref{pic2}.
\end{remark}

\begin{example}
In this example we consider the $2$-graph $\Lambda$ in Figure~\ref{pic2}.
\begin{figure}
\begin{center}
\begin{tikzpicture}

\def \n {2}
\def \radiuss {1.0cm}
\def \radiusm {1.15cm}
\def \radiusb {1.15cm}
\def \margin {6}
  \node[circle, inner sep=0pt] (v0) at (-3,.5) {$e$};
  \node[circle, inner sep=0pt] (v1) at (-1,0) {$\ $};
  \node[circle, inner sep=0pt] (v2) at (1, 0) {$\ $};
  \node[inner sep=1pt, circle] (27) at (-2.5,0) {$v$};	
   \path[->,every loop/.style={looseness=14}, >=latex] (27)
			 edge  [in=150,out=210,loop, red, dashed, >=latex] ();	
\foreach \s in {0,1} {
  \node[circle, inner sep=0pt] (l\s) at ({360/\n * (\s - 1)}:\radiusm) {$v_{\s}$};
  \draw[blue, <-, >=latex] ({360/\n * (\s - 1)+\margin}:\radiuss)
    arc ({360/\n * (\s - 1)+\margin}:{360/\n * (\s)-\margin}:\radiuss);
  \draw[->, blue,  out=170, in=10,  >=latex] (l\s) to (27);

} \foreach \s in {0,1} {
  \draw[red, dashed, <-, >=latex] ({360/\n * (\s - 1)+\margin}:\radiusb)
    arc ({360/\n * (\s - 1)+\margin}:{360/\n * (\s)-\margin}:\radiusb);
}
\end{tikzpicture}
\caption{An example of a $2$-graph $\Lambda$ with $C^*(\Lambda)$ of stable rank 2.} \label{pic2}
\end{center}
\end{figure}
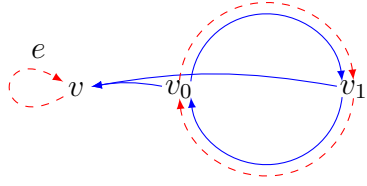
As before, we refer to \cite{MR3056660} for details on how to illustrate 2-graphs as a 2-coloured
graph. Here we use blue (solid) and red (dashed) as the first and second colour. We use our
results to compute the structure and stable rank of $C^*(\Lambda)$. Firstly notice that red the
cycle $\nu\in v\Lambda^{e_2}$ based on $v$ is not an initial cycle because, $d(\nu)_1 = 0$, but
$r(\nu) \Lambda^{e_1} \neq \emptyset$. However, the cycle $\mu\in v_0\Lambda^{e_1+e_2}$ is an
initial cycle. There are many other initial cycles, but they are all $\sim$-equivalent to $\mu$.
So $ \IC(\Lambda)/\smallspace\sim\ = \{[\mu]\}$. Since the vertices on a path $\lambda\in
v_0\Lambda$ alternate between $v_0$ and $v_1$ as we move along the path, it follows that
$\mu^{\infty}(m)=v_{m_1+m_2\pmod{2}}$ for each $m\in \NN^2$. Hence
\begin{align*}
G_\mu&=\{m-n: n,m\leq d(\mu^{\infty}), \mu^{\infty}(m)=\mu^{\infty}(n)\}\\
&=\{m-n: n,m\in \NN^2, v_{m_1+m_2\pmod{2}}=v_{n_1+n_2\pmod{2}}\}\\
&=\{(m_1-n_1, m_2-n_2): n_i,m_i\in \NN, m_1-n_1 = - (m_2-n_2)\pmod{2}\}\\
&=\{(k_1, k_2): k_i\in \ZZ, k_1 = -k_2\pmod{2}\}\\
&=\{k\in \ZZ^2: k_1 + k_2\ \text{is even}\}\\
&=\ZZ(1,1)+\ZZ(0,2),\\
&\cong \ZZ^2.
\end{align*}
We deduce that $\ell_\mu=\text{rank}(G_\mu)=2$. Now set $N=(|\Lambda^0|,|\Lambda^0|) =(3,3)$. As
mentioned, modulo $\sim$, there is only one initial cycle $\mu$, so any path in $\Lambda^{\leq N}$
has its source on $\mu$. Hence $\Lambda^{\leq N}(\mu^\infty)^0=\Lambda^{\leq N}=v\Lambda^{\leq
N}\sqcup v_0\Lambda^{\leq N}\sqcup v_1\Lambda^{\leq N}$. By Lemma~\ref{lem.init2},
$|v_0\Lambda^{\leq N}|=|v_1\Lambda^{\leq N}|=1$. Using the factorisation property to push the red
edges to the start of a path and uniqueness of such paths on $\mu$, we have $|v\Lambda^{\leq
N}|=|v\Lambda^{(3,3)}|=|v\Lambda^{(3,0)}|=|v\Lambda^{(1,0)}|=2$. Hence $C^*(\Lambda)\cong
M_{4}(C(\TT^{2}))$ and $sr(C^*(\Lambda))=\left\lceil \left\lfloor \frac{2}{2}\right\rfloor /
4\right\rceil+1=2$ by Theorem~\ref{stable.rank.thm}.
\end{example}

\begin{example}
In the following let $\Lambda_1, \Lambda_2$ and $\Lambda_3, \Lambda_4, \Lambda_5$ be the
$2$-graphs in Figure~\ref{pic1} and Figure~\ref{pic3} respectively. Up to a swap of the colours
these five examples make up all the examples of $2$-graphs on 6 vertices with only one initial
cycle up to $\sim$\,-equivalence and with all vertices on that initial cycle.
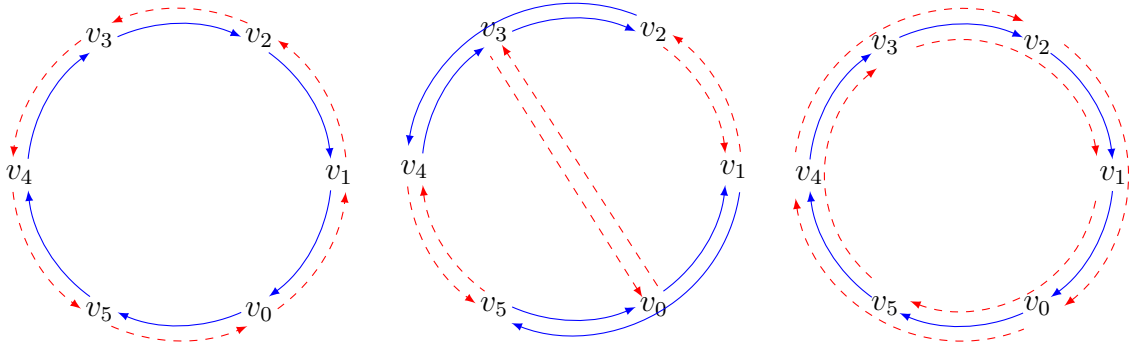
\begin{figure}
\begin{center}
\begin{tikzpicture}
\def \n {6}
\def \radiuss {2.0cm}
\def \radiusm {2.1cm}
\def \radiusb {2.2cm}
\def \margin {6}
\foreach \s in {5,...,0} {
  \node[circle, inner sep=0pt] at ({360/\n * (\s - 1)}:\radiusm) {$v_{\s}$};
  \draw[blue, <-, >=latex] ({360/\n * (\s - 1)+\margin}:\radiuss)
    arc ({360/\n * (\s - 1)+\margin}:{360/\n * (\s)-\margin}:\radiuss);

} \foreach \s in {5,...,0} {
  \draw[red, dashed, ->, >=latex] ({360/\n * (\s - 1)+\margin}:\radiusb)
    arc ({360/\n * (\s - 1)+\margin}:{360/\n * (\s)-\margin}:\radiusb);
}
\end{tikzpicture}
\ \ \
\begin{tikzpicture}
\def \n {6}
\def \radiuss {2.0cm}
\def \radiusm {2.1cm}
\def \radiusb {2.2cm}
\def \margin {6}
\draw[red, dashed, ->, >=latex] (1.1,-1.55) -- (-0.95,1.65);
\draw[red, dashed, <-, >=latex] (0.9,-1.7) -- (-1.1,1.5);

\foreach \s in {5,...,0} {
  \node[circle, inner sep=0pt] at ({360/\n * (\s - 1)}:\radiusm) {$v_{\s}$};
} \foreach \s in {2,3} {
  \draw[blue, <-, >=latex] ({360/\n * (\s - 1)+\margin}:\radiuss)
    arc ({360/\n * (\s - 1)+\margin}:{360/\n * (\s)-\margin}:\radiuss);
} \foreach \s in {0,5} {
  \draw[blue, ->, >=latex] ({360/\n * (\s - 1)+\margin}:\radiuss)
    arc ({360/\n * (\s - 1)+\margin}:{360/\n * (\s)-\margin}:\radiuss);
} \foreach \s in {1,4} {
  \draw[red, dashed, <-, >=latex] ({360/\n * (\s - 1)+\margin}:\radiuss)
    arc ({360/\n * (\s - 1)+\margin}:{360/\n * (\s)-\margin}:\radiuss);
} \foreach \s in {1,4} {
  \draw[red, dashed, ->, >=latex] ({360/\n * (\s - 1)+\margin}:\radiusb)
    arc ({360/\n * (\s - 1)+\margin}:{360/\n * (\s)-\margin}:\radiusb);
} \def \n {3}
\def \radius {1.8cm}
\def \radiusm {2.2cm}
\def \margin {8}
\foreach \s in {0}
{
  \node[circle, inner sep=0pt] at ({360/\n * (\s - 1)}:\radius) {$$};
  \draw[blue, <-, >=latex] ({360/\n * (\s - 1)+\margin}:\radiusm)
    arc ({360/\n * (\s - 1)+\margin}:{360/\n * (\s)-\margin}:\radiusm);
}

\foreach \s in {1}
{
  \node[circle, inner sep=0pt] at ({360/\n * (\s - .5)}:\radius) {$$};
  \draw[blue, ->, >=latex] ({360/\n * (\s - .5)+\margin}:\radiusm)
    arc ({360/\n * (\s - .5)+\margin}:{360/\n * (\s+.5)-\margin}:\radiusm);
}
\end{tikzpicture}
\ \ \
\begin{tikzpicture}

\def \n {6}
\def \radiuss {2cm}
\def \radiusm {2cm}
\def \radiusb {2.2cm}
\def \margin {6}

\foreach \s in {5,...,0} {
  \node[circle, inner sep=0pt] at ({360/\n * (\s - 1)}:\radiusm) {$v_{\s}$};
  \draw[blue, <-, >=latex] ({360/\n * (\s - 1)+\margin}:\radiuss)
    arc ({360/\n * (\s - 1)+\margin}:{360/\n * (\s)-\margin}:\radiuss);

} \foreach \s in {} {
  \draw[blue, dashed, <-, >=latex] ({360/\n * (\s - 1)+\margin}:\radiusb)
    arc ({360/\n * (\s - 1)+\margin}:{360/\n * (\s)-\margin}:\radiusb);
}
\def \n {3}
\def \radius {1.8cm}
\def \radiusm {2.2cm}
\def \margin {8}

\foreach \s in {1,...,\n}
{
  \node[circle, inner sep=0pt] at ({360/\n * (\s - 1)}:\radius) {$$};
  \draw[red, dashed, <-, >=latex] ({360/\n * (\s - 1)+\margin}:\radius)
    arc ({360/\n * (\s - 1)+\margin}:{360/\n * (\s)-\margin}:\radius);
}

\foreach \s in {1,...,\n}
{
  \node[circle, inner sep=0pt] at ({360/\n * (\s - .5)}:\radius) {$$};
  \draw[red, dashed, <-, >=latex] ({360/\n * (\s - .5)+\margin}:\radiusm)
    arc ({360/\n * (\s - .5)+\margin}:{360/\n * (\s+.5)-\margin}:\radiusm);
}

\end{tikzpicture}
\caption{Another three examples of $2$-graphs with lots of initial cycles, but only one such up to $\sim$\,-equivalence.} \label{pic3}
\end{center}
\end{figure}
Let $\mu_i$ denote such an initial cycle in $\Lambda_i$. For $0 \le j \le n-1$ define $f_j : \NN^2
\to \NN$ by $f_j ( m_1 , m_2 ) = m_1 + j m_2$.  With the terminology of Section~\ref{sec.ex} one
can show that $\Lambda_3=f_5^* ( L_6 )$, that $\Lambda_4=L_2 \times L_3$, and that
$\Lambda_5=f_2^* ( L_6 )$. Hence
\[
C^*(\Lambda_i)\cong M_{6}(C(\TT^{\ell_{\mu_i}})) \text{ and } sr(C^*(\Lambda))=\ell_{\mu_i}=2\ \ \  \text{for each}\ i=1,\dots,5.
\]
In particular $C^*(\Lambda_i)\cong C^*(\Lambda_j)$ for all $i,j$. These five examples indicate how
the number of colours and vertices impacts the structure of the corresponding $C^*$-algebras. The
next Proposition~\ref{init.prop} verifies this.
\end{example}

In the following, ``the number of vertices'' on an initial cycle $\mu$ means $|(\mu^\infty)^0|$,
and ``the number of colours'' means $|\{i\leq k: d(\mu)_i> 0\}|$. The proof borrows material from
an independent result (Proposition~\ref{ell.mu}).

\begin{prop}
\label{init.prop} Let $\Lambda$ be a finite, locally convex $k$-graph on $n=|\Lambda^0|$ vertices.
Suppose that $\Lambda$ has no cycle with an entrance and $\Lambda$ has exactly one initial cycle,
up to $\sim$\,-equivalence, with $n$ vertices and $\ell$ colours. Then
$$C^*(\Lambda)\cong M_n(C(\TT^\ell)), \text{ and } \ sr(C^*(\Lambda))=\lceil \lfloor \ell/2\rfloor/n\rceil+1.$$
\end{prop}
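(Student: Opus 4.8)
The plan is to apply Theorem~\ref{direct.sum.thm} directly. Since $\Lambda$ has no cycle with an entrance and $\IC(\Lambda)/\smallspace\sim$ consists of a single class $[\mu]$, the direct sum in Theorem~\ref{direct.sum.thm} collapses to a single summand, giving $C^*(\Lambda)\cong M_{\Lambda^{\leq N}(\mu^\infty)^0}(s_{r(\mu)}C^*(\Lambda)s_{r(\mu)})$ with $s_{r(\mu)}C^*(\Lambda)s_{r(\mu)}\cong C(\TT^{\ell_\mu})$, where $N=(n,\dots,n)$. So the two things I need to pin down are: (a) the matrix size $|\Lambda^{\leq N}(\mu^\infty)^0|$ equals $n$, and (b) $\ell_\mu=\ell$, the number of colours appearing in $\mu$.

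For (b), I would invoke Proposition~\ref{ell.mu} (cited in the paragraph preceding the statement), which asserts precisely that $\ell_\mu=|\{i\leq k:d(\mu)_i>0\}|$; by hypothesis this is $\ell$. For (a), the key observation is that since $\mu$ is, up to $\sim$, the unique initial cycle and it visits all $n$ vertices, we have $(\mu^\infty)^0=\Lambda^0$, so $\Lambda^{\leq N}(\mu^\infty)^0=\Lambda^{\leq N}$. Thus I must show $|\Lambda^{\leq N}|=n$. By Lemma~\ref{unit.sum}\eqref{part1.get}, $\sum_{\lambda\in\Lambda^{\leq N}}s_\lambda s_\lambda^*=1=\sum_{v\in\Lambda^0}s_v$, and the $s_\lambda s_\lambda^*$ are mutually orthogonal projections; more usefully, for each vertex $v$ I claim $|v\Lambda^{\leq N}|=1$. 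Indeed $v$ lies on the initial cycle $\mu$, so by Lemma~\ref{lem.init2}\eqref{part2.init} the rotated cycle $\mu_v$ based at $v$ is again an initial cycle, and Lemma~\ref{lem.init2}\eqref{part3.init}--\eqref{part4.init} show that every path in $v\Lambda^{\leq n\cdot(1,\dots,1)}$ is an initial segment of $\mu_v$; since $\Lambda^{\leq N}$-paths are maximal with degree $\le N$, there is exactly one such path (the argument is exactly the one run in the displayed Example computing $|v_0\Lambda^{\leq N}|=1$). Summing over the $n$ vertices gives $|\Lambda^{\leq N}|=n$.

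Having established $C^*(\Lambda)\cong M_n(C(\TT^\ell))$, the stable rank computation is immediate: by properties \eqref{matrix.result} and \eqref{abelian.result} of Section~\ref{sec.1.3}, $sr(M_n(C(\TT^\ell)))=\lceil(sr(C(\TT^\ell))-1)/n\rceil+1=\lceil\lfloor\ell/2\rfloor/n\rceil+1$. This is exactly the formula in the statement, and it also matches the single-class case of Corollary~\ref{stab.fin}. I expect the only genuinely delicate point is step (a)---verifying that each $v\Lambda^{\leq N}$ is a singleton---but this is really just a bookkeeping consequence of Lemma~\ref{lem.init2}, since the no-entrance hypothesis forces every path out of a vertex on the initial cycle to stay on (and be an initial segment of) the cycle; everything else is a direct citation of Theorem~\ref{direct.sum.thm}, Proposition~\ref{ell.mu}, and the standard stable-rank identities.
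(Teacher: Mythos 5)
Your proposal is correct and follows essentially the same route as the paper's proof: invoke Theorem~\ref{direct.sum.thm} (and the stable-rank identities of Section~\ref{sec.1.3}, equivalently Theorem~\ref{stable.rank.thm}), identify $\ell_\mu=\ell$ via Proposition~\ref{ell.mu}, and use Lemma~\ref{lem.init2} to count $|\Lambda^{\leq N}(\mu^\infty)^0|=n$. The only (cosmetic) difference is in that count: the paper first reduces $\Lambda^{\leq N}$ to $\Lambda^{N}$ by asserting $\Lambda$ has no sources, whereas you argue directly that each $v\Lambda^{\leq N}$ is a singleton, which is if anything the more careful way to handle the case $\ell<k$.
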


\begin{proof}
Let $\mu$ be an initial cycle in $\Lambda$. By Theorem~\ref{direct.sum.thm} and
Theorem~\ref{stable.rank.thm} we have
$$sr(C^*(\Lambda))=\left\lceil \frac{\left\lfloor \frac{\ell_\mu}{2}\right\rfloor}{|\Lambda^{\leq (n,\dots, n)}|}\right\rceil+1, \ \ \ C^*(\Lambda)\cong M_{\Lambda^{\leq (n,\dots, n)}}(C(\TT^{\ell_\mu})),$$
where $\ell_\mu$ is the rank of the periodicity group associated to $\mu$
(Definition~\ref{rank.Gmu}). By the factorisation property $\Lambda$ has no sources, so
$\Lambda^{\leq (n,\dots, n)}=\Lambda^{(n,\dots, n)}$. Lemma~\ref{lem.init2}\eqref{part3.init}
implies that $|\Lambda^{(n,\dots, n)}|=n$. By Proposition~\ref{ell.mu}, $\ell_\mu=\ell$. Combining
these results gives $C^*(\Lambda)\cong M_n(C(\TT^\ell))$, and $sr(C^*(\Lambda))=\lceil \lfloor
\ell/2\rfloor/n\rceil+1$.
\end{proof}

\begin{remark}
To keep the statement of Proposition~\ref{init.prop} short and clean we insisted that
$(\mu^\infty)^0=\Lambda^0$, but more general results can be obtained using
Theorem~\ref{direct.sum.thm} and Theorem~\ref{stable.rank.thm}.
\end{remark}

\section{Stable rank one.}
\label{three} In this section we characterise which finite $k$-graphs have $C^*$-algebras of
stable rank~1---see Theorem~\ref{sr1} and Corollary~\ref{sr1.cor}. We note that Theorem~\ref{sr1}
is in large contained in \cite{MR2920846} and we have structured the proof accordingly.

\begin{thm}
\label{sr1} Let $\Lambda$ be a finite, locally convex $k$-graph. Then $sr(C^*(\Lambda))=1$ if and
only if $C^*(\Lambda)$ is (stably) finite and $\max_{\mu\in IC(\Lambda)} \ell_{\mu}=1$.
\end{thm}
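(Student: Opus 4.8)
The plan is to deduce this characterisation directly from the structure theory of Section~\ref{two} together with the stable rank computations recalled in Section~\ref{sec.1.3}. First I would handle the ``only if'' direction by contraposition. If $C^*(\Lambda)$ is not stably finite, then by Proposition~\ref{stably.finite} it contains an infinite projection, hence cannot have stable rank~$1$ (a unital $C^*$-algebra of stable rank~$1$ is finite, since density of invertibles forces the unit to be a finite projection---see \cite{MR693043}). If instead $C^*(\Lambda)$ is stably finite but $\max_{\mu\in \IC(\Lambda)} \ell_\mu \geq 2$, then pick an initial cycle $\mu$ with $\ell_\mu \geq 2$; by Theorem~\ref{direct.sum.thm} the algebra $C^*(\Lambda)$ has a direct summand isomorphic to $M_{\Lambda^{\leq N}(\mu^\infty)^0}(C(\TT^{\ell_\mu}))$, and by properties~\eqref{abelian.result} and~\eqref{matrix.result} of Section~\ref{sec.1.3} the stable rank of this summand is $\lceil \lfloor \ell_\mu/2\rfloor / |\Lambda^{\leq N}(\mu^\infty)^0| \rceil + 1 \geq 2$, since $\lfloor \ell_\mu/2 \rfloor \geq 1$. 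By property~\eqref{direct.sum}, $sr(C^*(\Lambda)) \geq 2$.

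For the ``if'' direction, suppose $C^*(\Lambda)$ is stably finite and $\ell_\mu = 1$ for every initial cycle $\mu$ (noting that by Proposition~\ref{stably.finite} stable finiteness is equivalent to $\Lambda$ having no cycle with an entrance, and that $\IC(\Lambda) \neq \emptyset$ by Lemma~\ref{unit.sum}). Then apply Corollary~\ref{stab.fin} (equivalently Theorem~\ref{stable.rank.thm}): with $N = (|\Lambda^0|,\dots,|\Lambda^0|)$,
\[
sr(C^*(\Lambda)) = \max_{[\mu]\in\IC(\Lambda)/\sim} \left\lceil \frac{\lfloor \ell_\mu/2\rfloor}{|\Lambda^{\leq N}(\mu^\infty)^0|} \right\rceil + 1.
\]
Since each $\ell_\mu = 1$, we have $\lfloor \ell_\mu/2\rfloor = 0$, so every term in the maximum equals $\lceil 0 \rceil + 1 = 1$, giving $sr(C^*(\Lambda)) = 1$. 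Alternatively, and perhaps more cleanly, one observes that Theorem~\ref{direct.sum.thm} expresses $C^*(\Lambda)$ as a finite direct sum of matrix algebras over $C(\TT^{\ell_\mu})$, and $C(\TT) \cong C(\TT^1)$ has stable rank~$1$ by property~\eqref{abelian.result}, while $C(\TT^0) = \CC$ trivially does; a matrix algebra over a stable-rank-one commutative algebra has stable rank~$1$ by property~\eqref{matrix.result}, and a finite direct sum of stable-rank-one algebras has stable rank~$1$ by property~\eqref{direct.sum}.

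I do not anticipate a serious obstacle here, since the theorem is essentially an assembly of results already established in the excerpt; the remark preceding the statement explicitly signals that the proof is meant to be short and to lean on \cite{MR2920846}. The only point requiring a little care is reconciling the index set of the maximum: the statement uses $\max_{\mu \in \IC(\Lambda)}$ over all initial cycles, whereas Theorem~\ref{stable.rank.thm} takes the maximum over $\sim$-equivalence classes. This is harmless because $\ell_\mu$ depends only on $(\mu^\infty)^0$ (indeed by Proposition~\ref{ell.mu} it equals $|\{i \leq k : d(\mu)_i > 0\}|$, which is visibly a $\sim$-invariant once one knows the vertex set determines which coordinate directions are ``used''), so $\sim$-equivalent initial cycles have equal $\ell_\mu$ and the two maxima agree. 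I would state this reconciliation in one sentence and otherwise let the cited structure and stable-rank results do the work.
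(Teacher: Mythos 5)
Your argument is correct, and it takes a genuinely more self-contained route than the paper. The paper's proof deliberately leans on the external reference \cite{MR2920846}: for the forward direction it first shows $sr=1$ forces stable finiteness (via $sr(M_n(C^*(\Lambda)))=1$ and finiteness of stable-rank-one algebras), then invokes \cite[Proposition~5.9]{MR2920846} to get a \emph{stable} isomorphism with $\bigoplus_i C(\TT^{l_i})$, uses preservation of stable rank~one under stable isomorphism \cite[Theorem~3.6]{MR693043}, and then must do some bookkeeping (inspecting the proof of \cite[Proposition~5.9]{MR2920846}) to match the exponents $l_i$ with the ranks $\ell_\mu$; the converse runs the same machine backwards via \cite[Corollary~5.7]{MR2920846}. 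You instead route everything through the paper's own Theorem~\ref{direct.sum.thm} and Theorem~\ref{stable.rank.thm}/Corollary~\ref{stab.fin}, which give an honest isomorphism $C^*(\Lambda)\cong\bigoplus M_{n_\mu}(C(\TT^{\ell_\mu}))$ and an exact stable rank formula, so no appeal to stable isomorphism or to the internals of \cite{MR2920846} is needed; the only extra point is the one you correctly flag, that $\ell_\mu$ is a $\sim$-invariant (immediate from Proposition~\ref{ell.mu}, whose proof is independent of Theorem~\ref{sr1}, so there is no circularity). Two small remarks: in the ``if'' direction you should assume only $\ell_\mu\le 1$ for all $\mu$ rather than $\ell_\mu=1$ for all $\mu$, but your computation uses only $\lfloor\ell_\mu/2\rfloor=0$, so nothing changes; and your contrapositive for the ``only if'' direction establishes $\max_\mu\ell_\mu\le 1$ rather than $=1$ --- the paper's own proof has exactly the same feature (it deduces $\max_i l_i=1$ from $\lfloor l_i/2\rfloor=0$), so this reflects a degenerate edge case in the statement (all initial cycles trivial) rather than a defect of your argument relative to the paper's.
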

\begin{proof}
Suppose that $sr(C^*(\Lambda))=1$. Then $sr(M_n(C^*(\Lambda)))=1$  for each positive integer $n$
\cite[Theorem~6.1]{MR693043}. Hence each $M_n(C^*(\Lambda))$ is finite \cite[V.3.1.5]{MR2188261},
which implies that $C^*(\Lambda)$ is stably finite. Since $C^*(\Lambda)$ is finite it does not
contain any infinite projections \cite[Lemma~5.1.2]{MR1783408}.
Hence by \cite[Proposition~5.9]{MR2920846} there exist $n \ge 1$ and $l_1, \dots, l_n \in \{0,
\dots, k\}$ such that $C^*(\Lambda)$ is stably isomorphic to $\bigoplus^n_{i=1} C(\TT^{l_i})$.
Since $sr(C^*(\Lambda))=1$, we deduce that $sr(\bigoplus_{i=1}^n C(\TT^{l_i}))=1$, because stable
rank~1 for unital $C^*$-algebras is preserved my stable isomorphism \cite[Theorem~3.6]{MR693043}.
By property~\ref{direct.sum} in Section~\ref{sec.1.3}, we have $sr(\bigoplus_{i=1}^n
C(\TT^{l_i}))=\max_{i=1}^n C(\TT^{l_i})$. For each $i=1,\dots,n$ we use \cite[Proposition
1.7]{MR693043} to deduce that $sr(C(\TT^{l_i}))=\lfloor l_i/2\rfloor+1$, where
$\lfloor\cdot\rfloor$ denotes ``integer part of''. Hence $\max_{i=1}^n l_i=1$.

By inspection of the proof of \cite[Proposition~5.9]{MR2920846} it is clear that each of the
integers $l_i$ is the rank of $\mu$ for some $\mu\in IC(\Lambda)$, so $\max_{\mu\in IC(\Lambda)}
\ell_{\mu}\geq \max_{i=1}^n l_i=1$. For each $\mu,\nu\in IC(\Lambda)$ define $P_\nu\mydef
\sum_{v\in (\nu^\infty)^0}s_v$ and $\mu\sim \nu \Leftrightarrow (\mu^\infty)^0=(\nu^\infty)^0$.
Since $P_\nu=P_\mu$ whenever $\mu\sim \nu$, the proof of \cite[Proposition~5.9]{MR2920846} implies
that for each $\mu\in IC(\Lambda)$, we have $\ell_\mu=l_i$ for some $i\in \{1,\dots,n\}$.
Consequently, $\max_{\mu\in IC(\Lambda)} \ell_{\mu}= \max_{i=1}^n l_i$.

Conversely, suppose $C^*(\Lambda)$ is finite and $\max_{\mu\in IC(\Lambda)} \ell_{\mu}=1$. By
\cite[Lemma~5.1.2]{MR1783408}, $C^*(\Lambda)$ has no infinite projections. So
\cite[Corollary~5.7]{MR2920846} implies that $C^*(\Lambda)$ is stably isomorphic to
$\bigoplus_{i=1}^n C(\TT^{l_i})$ for some $n\geq 1$ and $l_1,\dots,l_n\in \{0,\dots,k\}$ such that
$\max_{\mu\in IC(\Lambda)} \ell_{\mu}= \max_{i=1}^n l_i$. By the properties in
Section~\ref{sec.1.3}, it follows that
\[
sr(C^*(\Lambda))=sr\Big(\bigoplus_{i=1}^n C(\TT^{l_i})\Big)=\max_{i=1,\dots,n} \lfloor l_i/2\rfloor+1=1.\qedhere
\]
\end{proof}

\begin{remark}
\label{sr1example} It turns out that $C^*$-algebras of finite $k$-graphs with $k>1$ rarely have
stable rank one: the condition $\max_{\mu\in IC(\Lambda)} \ell_{\mu}=1$ is rather strict. As
Proposition~\ref{ell.mu} indicates, if $\Lambda^0$ is finite and $sr(C^*(\Lambda))=1$ (hence
stably finite), then any initial cycle in $\Lambda$ has at most one colour. Using
Lemma~\ref{unit.sum}\eqref{part2.get} and the factorisation property, it follows that any cycle in
$\Lambda$ has at most one colour.

Figure~\ref{pic1.5} illustrates two examples of $2$-graphs $\Lambda$ with $C^*(\Lambda)$ of stable
rank one. The first example, illustrated on the left, has two vertices $v_1,v_2$, a single edge
red (dashed) loop based at $v_1$, and single edge blue  (solid) loop based at $v_2$. The second
example, shown on the right in Figure~\ref{pic1.5}, is different in that it is connected and
contains no loops.
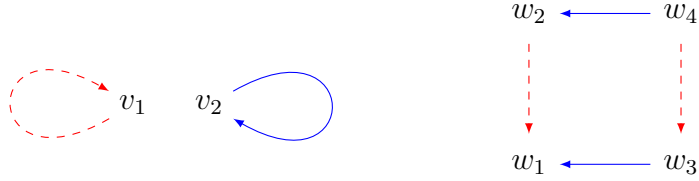
\begin{figure}
\begin{center}
\begin{tikzpicture}

\def \n {1}
\def \radiuss {1.0cm}
\def \radiusm {1.15cm}
\def \radiusb {1.15cm}
  \node[circle, inner sep=0pt] (v1) at (-1,0) {$\ $};
  \node[circle, inner sep=0pt] (v2) at (1, 0) {$\ $};
  \node[inner sep=2.8pt, circle] (27) at (-2.5,0) {$v_1$};
   \node[inner sep=2.8pt, circle] (28) at (-1.5,0) {$v_2$};	
   \path[->,every loop/.style={looseness=14}, >=latex] (27)
			 edge  [in=150,out=210,loop, red, dashed, >=latex] ();
				
   \path[->,every loop/.style={looseness=14}, >=latex] (28)
			 edge  [in=150+180,out=210+180,loop, blue, >=latex] ();
\end{tikzpicture}\ \ \ \ \ \ \ \ \
\begin{tikzpicture}
  \node[inner sep=2.8pt, circle] (27) at (0,0) {$w_1$};
   \node[inner sep=2.8pt, circle] (28) at (0,2) {$w_2$};	
     \node[inner sep=2.8pt, circle] (29) at (2,0) {$w_3$};
   \node[inner sep=2.8pt, circle] (30) at (2,2) {$w_4$};	
      \path[->, red, dashed, >=latex] (28) edge [below] node {} (27);
            \path[->, red, dashed, >=latex] (30) edge [below] node {} (29);

                  \path[->, blue, >=latex] (29) edge [below] node {} (27);
            \path[->, blue, >=latex] (30) edge [below] node {} (28);

\end{tikzpicture}
\caption{Two examples of $2$-graphs with a $C^*$-algebra of stable rank one.} \label{pic1.5}
\end{center}
\end{figure}
\end{remark}

Following \cite{MR2920846}, for $n\in \NN^k$ there is a {shift map} $\sigma^n: \{x \in W_\Lambda :
n\leq d(x) \} \to W_\Lambda$ such that $d(\sigma^n(x))=d(x)-n$ and $\sigma^n(x)(p,q)=x(n+p,n+q)$
for $0\leq p \leq q \leq d(x)-n$ where we use the convention $\infty-a=\infty$ for $a \in \NN$.
For $x \in W_\Lambda$ and $n\leq d(x)$, we then have $x(0,n)\sigma^n(x) = x$. We now show an easy
way to compute $\ell_\mu$, using only the degree of $\mu$.
\begin{prop}
\label{ell.mu} Let $\Lambda$ be a finite, locally convex $k$-graph such that $\Lambda$ has no
cycle with an entrance. Then for each $\mu\in \IC(\Lambda)$,
$$\ell_\mu = |\{i\leq k: d(\mu)_i> 0\}||.$$
\end{prop}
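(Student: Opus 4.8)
The plan is to show the two inequalities $\ell_\mu \le |\{i\le k: d(\mu)_i>0\}|$ and $\ell_\mu \ge |\{i\le k: d(\mu)_i>0\}|$ separately, working directly from the definition \eqref{groupofmu} of $G_\mu$. Throughout, put $J\mydef\{i\le k: d(\mu)_i>0\}$, so that $d(\mu^\infty)_i=\infty$ exactly when $i\in J$ and $=0$ otherwise. First I would dispose of the trivial case: if $\mu$ is a vertex then $J=\emptyset$, $\ell_\mu=0$ by definition, and there is nothing to prove. So assume $\mu\notin\Lambda^0$.

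For the upper bound $\ell_\mu\le |J|$, observe that every element $m-n$ of $G_\mu$ with $m,n\le d(\mu^\infty)$ has the property that $m_i=n_i=0$ whenever $i\notin J$ (since then $d(\mu^\infty)_i=0$ forces $m_i=n_i=0$). Hence $G_\mu$ is contained in the subgroup $\{z\in\ZZ^k : z_i=0 \text{ for } i\notin J\}\cong\ZZ^{|J|}$, and since a subgroup of $\ZZ^{|J|}$ is free of rank at most $|J|$ (combining this with Definition~\ref{rank.Gmu}), we get $\ell_\mu\le |J|$.

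The substantive direction is the lower bound $\ell_\mu\ge |J|$; equivalently, $G_\mu$ spans $\RR^{|J|}$ inside the coordinate subspace indexed by $J$. The key observation is that since $\mu$ is an initial cycle, $\mu^\infty$ revisits the vertex $r(\mu)=s(\mu)$ periodically: concretely $\mu^\infty(p\cdot d(\mu))=r(\mu)=\mu^\infty(0)$ for every $p\in\NN$, using the defining property $(\tau^\infty)(n\cdot d(\tau),(n+1)\cdot d(\tau))=\tau$ of $\mu^\infty$ and $r(\mu)=s(\mu)$. Therefore $d(\mu)=1\cdot d(\mu)-0\in G_\mu$. Since $d(\mu)_i>0$ precisely for $i\in J$, the single element $d(\mu)$ already has all $|J|$ of the relevant coordinates strictly positive. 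But one vector is not enough to span; to get rank $|J|$ I would argue that $G_\mu$ contains, for each $i\in J$, an element supported (after subtracting multiples of $d(\mu)$) only in coordinate $i$, or more robustly, I would show that $G_\mu$ contains $N\cdot e_i$ for each $i\in J$, where $N=|\Lambda^0|$. The idea is: fix $i\in J$ and consider the path $\mu^\infty$ restricted along the $i$-th axis starting from $r(\mu)$; among the $N+1$ vertices $\mu^\infty(0), \mu^\infty(e_i), \dots, \mu^\infty(Ne_i)$ (all of which lie on the initial cycle, using Lemma~\ref{lem.init2}\eqref{part4.init} or \eqref{gen.source}) two must coincide by pigeonhole, giving $m,n\le d(\mu^\infty)$ with $m-n$ a nonzero multiple of $e_i$, hence in $G_\mu$. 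Doing this for every $i\in J$ produces $|J|$ linearly independent elements of $G_\mu$, so $\ell_\mu\ge |J|$, completing the proof.

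The main obstacle I anticipate is making the pigeonhole step fully rigorous: I must ensure that the vertices $\mu^\infty(te_i)$ for $t\le N$ genuinely lie among the finitely many vertices of $(\mu^\infty)^0$ (so that pigeonhole applies with bound $N$), and that when two coincide, say $\mu^\infty(s e_i)=\mu^\infty(t e_i)$ with $s<t$, the resulting element $(t-s)e_i$ is actually in the subgroup $G_\mu$ as defined in \eqref{groupofmu} — this requires checking $s e_i, t e_i \le d(\mu^\infty)$, which holds because $i\in J$ gives $d(\mu^\infty)_i=\infty$. A secondary subtlety is confirming that $\mu^\infty(n\cdot d(\mu))=r(\mu)$ so that $d(\mu)\in G_\mu$; this is immediate from the construction of $\tau^\infty$ recalled in the path-space subsection together with $r(\mu)=s(\mu)$. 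Once these bookkeeping points are nailed down, the argument is routine; alternatively, one could cite \cite[Lemma~5.8]{MR2920846} for finer structure of $G_\mu$ and shorten the lower-bound argument, but the pigeonhole approach is self-contained.
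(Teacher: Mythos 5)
Your proof is correct and follows essentially the same route as the paper: the upper bound via the observation that every element of $G_\mu$ vanishes in the coordinates outside $J$, and the lower bound via pigeonhole along each axis $i\in J$ to produce a nonzero multiple of $e_i$ in $G_\mu$. If anything your version is slightly more direct, since the definition \eqref{groupofmu} only requires $\mu^\infty(se_i)=\mu^\infty(te_i)$ with $se_i,te_i\le d(\mu^\infty)$, so you can skip the paper's extra step of pulling the repeated vertex back to $r(\mu)$ using uniqueness of infinite paths.
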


\begin{proof}
Let $I=\{i\leq k: d(\mu)_i> 0\}$. We must show that $\ell_\mu = | I |$. If $I=\emptyset$ then
$\ell_\mu = 0= | I |$, so assume that $I$ is nonempty. By~\eqref{groupofmu},
 $$G_\mu=\{m-n: n,m\leq d(\mu^{\infty}), \mu^{\infty}(m)=\mu^{\infty}(n)\}.$$
Since each $n,m\leq d(\mu^{\infty})$ satisfy $n,m\in \espan_{\NN}\{e_i :  i\in I\}$, the rank of
$G_\mu$ is at most $| I |$. Consequently, it suffices to show $G_\mu$ contains a subgroup of rank
$| I |$.

Let $v\mydef \mu^{\infty}(0)$. We claim that for each colour $i\in I$, there exists a positive
integer $m_i$ such that $\mu^{\infty}(0,m_ie_i)=v$. Indeed, since $\Lambda^0$ is finite there
exists $m<n$ such that $\mu^\infty(m e_i)=\mu^\infty(n e_i)$. Now using that $\mu^\infty\in
\Lambda^{\leq \infty}$ (see Lemma~\ref{lem.init2}) and that for every vertex $w$ on $\mu$ there is
a unique path in $w\Lambda^{\leq \infty}$ (see Section~\ref{sec.ex}) we get that $\sigma^{m
e_i}(\mu^\infty)=\sigma^{n e_i}(\mu^\infty)$. Now for $N\mydef md(\mu)$ it follows that
$\sigma^{N}(\mu^\infty)=\mu^\infty$. Since
$$\mu^\infty=\sigma^{N}(\mu^\infty)=\sigma^{N-me_i+me_i}(\mu^\infty)=\sigma^{N-me_i+ne_i}(\mu^\infty)=\sigma^{(n-m)e_i}(\mu^\infty),$$
we get $(\mu^\infty)((n-m)e_i)=v$. Hence $\mu^\infty$ contains a cycle of degree $(n-m)e_i$ based
at $v$. In particular we can use $m_i\mydef n-m$.

By the preceding paragraph $\{m_ie_i : i\in I\}\subseteq G_\mu$ is a $\mathbb{Z}$-linearly
independent set generating a rank-$|I|$ subgroup of $G_\mu$. So the rank of $G_\mu$ is $| I |$.
\end{proof}

\begin{cor}
\label{sr1.cor} Let $\Lambda$ be a finite, locally convex $k$-graph. Then $sr(C^*(\Lambda))=1$ if
and only if $\Lambda$ has no cycle with an entrance and no initial cycle with more than one
colour.
\end{cor}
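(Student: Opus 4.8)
The plan is to deduce Corollary~\ref{sr1.cor} by combining Theorem~\ref{sr1} with Proposition~\ref{ell.mu}, and by translating the hypothesis ``$\Lambda$ has no cycle with an entrance'' into the stable finiteness of $C^*(\Lambda)$ via Proposition~\ref{stably.finite}. First I would observe that by Proposition~\ref{stably.finite}, the condition that $\Lambda$ has no cycle with an entrance is equivalent to $C^*(\Lambda)$ being stably finite, so the two conditions in the statement of Corollary~\ref{sr1.cor} ($\Lambda$ has no cycle with an entrance, and no initial cycle has more than one colour) can be rephrased as: $C^*(\Lambda)$ is stably finite, and $|\{i \le k : d(\mu)_i > 0\}| \le 1$ for every $\mu \in \IC(\Lambda)$.

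Next, assuming $C^*(\Lambda)$ is stably finite (equivalently, $\Lambda$ has no cycle with an entrance), Proposition~\ref{ell.mu} applies and gives $\ell_\mu = |\{i \le k : d(\mu)_i > 0\}|$ for every $\mu \in \IC(\Lambda)$. Hence the condition ``$\ell_\mu \le 1$ for all $\mu$'' is literally the same as ``no initial cycle has more than one colour''. Since $\ell_\mu \in \{0, 1, \dots, k\}$ is always a nonnegative integer, and (by Lemma~\ref{unit.sum}\eqref{part2.get}, or rather the nonemptiness of $\IC(\Lambda)$ noted after it) $\IC(\Lambda)$ is nonempty, the condition $\max_{\mu \in \IC(\Lambda)} \ell_\mu = 1$ of Theorem~\ref{sr1} is equivalent to $\max_{\mu \in \IC(\Lambda)} \ell_\mu \le 1$, which is exactly ``no initial cycle has more than one colour''. (If every initial cycle were a vertex, then all $\ell_\mu = 0$ and $\max \ell_\mu = 0 \ne 1$; but in that degenerate case one checks directly, as in Theorem~\ref{direct.sum.thm}, that $C^*(\Lambda)$ is a direct sum of matrix algebras over $\CC$ and hence has stable rank~$1$ anyway, so this edge case should be handled — the cleanest route is simply to note that $sr(C(\TT^{\ell_\mu})) = \lfloor \ell_\mu/2\rfloor + 1 = 1$ precisely when $\ell_\mu \le 1$, for every $\mu$.)

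Putting this together: if $sr(C^*(\Lambda)) = 1$, then by Theorem~\ref{sr1}, $C^*(\Lambda)$ is stably finite (so $\Lambda$ has no cycle with an entrance by Proposition~\ref{stably.finite}) and $\max_{\mu \in \IC(\Lambda)} \ell_\mu \le 1$, which by Proposition~\ref{ell.mu} says no initial cycle has more than one colour. Conversely, if $\Lambda$ has no cycle with an entrance and no initial cycle has more than one colour, then $C^*(\Lambda)$ is stably finite by Proposition~\ref{stably.finite}, Proposition~\ref{ell.mu} gives $\ell_\mu = |\{i : d(\mu)_i > 0\}| \le 1$ for each $\mu \in \IC(\Lambda)$, and then Corollary~\ref{stab.fin} (or Theorem~\ref{stable.rank.thm}) yields
\[
sr(C^*(\Lambda)) = \max_{[\mu] \in \IC(\Lambda)/\sim} \left\lceil \frac{\lfloor \ell_\mu/2 \rfloor}{|\Lambda^{\leq N}(\mu^\infty)^0|} \right\rceil + 1 = \max_{[\mu]} \left\lceil \frac{0}{|\Lambda^{\leq N}(\mu^\infty)^0|}\right\rceil + 1 = 1,
\]
using $\lfloor \ell_\mu/2\rfloor = 0$ since $\ell_\mu \le 1$. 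Alternatively one can invoke Theorem~\ref{sr1} directly once the colour condition has been translated into $\max_\mu \ell_\mu \le 1$.

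I do not anticipate a genuine obstacle here, since this corollary is essentially a repackaging of Theorem~\ref{sr1} and Proposition~\ref{ell.mu} into a statement phrased purely in graph-theoretic language. The only subtlety worth care is the boundary between ``$\max_\mu \ell_\mu = 1$'' (as literally stated in Theorem~\ref{sr1}) and ``$\max_\mu \ell_\mu \le 1$'' (what one gets from ``no initial cycle has more than one colour''): these differ exactly when all initial cycles are trivial vertices, and in that case $sr(C^*(\Lambda)) = 1$ still holds, so the corollary's formulation is the correct one. I would phrase the proof to sidestep this by working with the stable-rank formula of Corollary~\ref{stab.fin} directly rather than quoting the ``$= 1$'' clause of Theorem~\ref{sr1} verbatim.
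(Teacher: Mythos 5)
Your proposal is correct and follows essentially the same route as the paper, whose entire proof is the single line ``Combine Proposition~\ref{stably.finite}, Theorem~\ref{sr1}, and Proposition~\ref{ell.mu}.'' Your extra care over the degenerate case where every initial cycle is a vertex (so that $\max_\mu \ell_\mu = 0$ rather than $1$, and one should fall back on the formula in Corollary~\ref{stab.fin}) is a genuine refinement that the paper's one-line proof glosses over, and your resolution of it is the right one.
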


\begin{proof}
Combine Proposition~\ref{stably.finite}, Theorem~\ref{sr1}, and Proposition~\ref{ell.mu}.
\end{proof}

\begin{remark}
A \emph{graph trace} on a locally convex row-finite $k$-graph $\Lambda$ is a function
$g\colon\Lambda^0\to \RR^+$ satisfying the {graph trace property}, $g(v)=\sum_{\lambda \in
v\Lambda^{\leq n}} g(s(\lambda))$ for all $v\in \Lambda^0$ and $n\in \NN^k$. It is \emph{faithful}
if it is nonzero on every vertex in $\Lambda$ (\cite{MR2434188, MR1962131}).

It can be shown that Corollary~\ref{sr1.cor} remains valid if we replace ``has no cycle with an
entrance'' by ``admits a faithful graph trace''. Indeed, the $C^*$-algebra of a row-finite and
cofinal $k$-graph $\Lambda$ with no sources is stably finite if and only if $\Lambda$ admits a
faithful graph trace  \cite[Theorem 1.1]{MR3507995}, and for $\Lambda^0$ finite this remains true
without ``cofinal'' and with ``locally convex'' instead of  ``no sources" (by virtue of
Theorem~\ref{direct.sum.thm} and \cite[Lemma~7.1]{MR3311883}).
\end{remark}

\section{Stable rank in the simple and cofinal case.}
\label{four} In this section we focus on stable rank of $k$-graph $C^*$-algebras for which the
$k$-graph is cofinal, corresponding to boxes 1~and~3 in Figure~\ref{fbuha}. Since simple $k$-graph
$C^*$-algebras constitute a sub-case of this situation (as illustrated below), we consider those
first.

Let $\Lambda$ be a row-finite, locally convex $k$-graph. Following \cite{MR2270926}, $\Lambda$ is
\emph{cofinal} if for all pairs $v,w\in \Lambda^0$ there exists $n\in \NN^k$ such that
$s(w\Lambda^{\leq n})\subseteq s(v\Lambda)$.  Following \cite{MR2534246}, $\Lambda$ has
\emph{local periodicity $m,n$ at $v$} if for every $x\in v\Lambda^{\leq\infty}$, we have $m - (m
\wedge d(x)) = n - (n \wedge d(x))$ and $\sigma^{m \wedge d(x)}(x) =\sigma^{n \wedge d(x)}(x)$. If
$\Lambda$ fails to have local periodicity $m,n$ at $v$ for all $m\neq n\in\NN^k$ and
$v\in\Lambda^0$, we say that $\Lambda$ has \emph{no local periodicity}. By
\cite[Theorem~3.4]{MR2534246},
$$\Lambda \text{ is cofinal and has no local periodicity if and only if } C^*(\Lambda) \text{ is simple.}$$

The stable rank of $1$-graph $C^*$-algebras is well understood (see \cite[Theorem~3.4]{MR2001940},
\cite[Theorem~3.3]{MR1868695} and \cite[Theorem~3.1]{MR2059803}), but the following is new for
$k>1$. Recall that a \emph{cycle} is a path $\lambda \in \Lambda \setminus \Lambda^0$ such that
$r(\lambda) = s(\lambda)$.

\begin{prop}
\label{s.r.simple} Let $\Lambda$ be a finite, locally convex $k$-graph. Suppose that $\Lambda$ is
cofinal and has no local periodicity (i.e., $C^*(\Lambda)$ is simple). Then
\[
    sr(C^*(\Lambda))=\begin{dcases}
        1 & \text{if $\Lambda$ contains no cycles}  \\
        \infty & \text{otherwise.} \\
    \end{dcases}
\]
\end{prop}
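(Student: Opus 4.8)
The plan is to split into the two cases according to whether $\Lambda$ contains a cycle. First suppose $\Lambda$ contains no cycle. Since $\Lambda$ is finite, the absence of cycles means that every path has bounded degree, so $\Lambda$ is a finite $k$-graph in which $\IC(\Lambda)$ consists only of trivial (vertex) initial cycles, i.e. sources; equivalently $\Lambda$ has no cycle at all, hence certainly no cycle with an entrance. By Proposition~\ref{stably.finite}, $C^*(\Lambda)$ is stably finite, and by Corollary~\ref{sr1.cor} we have $sr(C^*(\Lambda)) = 1$ provided $\Lambda$ has no initial cycle with more than one colour. But an initial cycle with more than one colour would have positive degree in at least one coordinate, hence would be a genuine (nontrivial) cycle, contradicting our assumption. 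So $sr(C^*(\Lambda)) = 1$ in this case. (Alternatively, one can invoke Theorem~\ref{direct.sum.thm}: with no nontrivial cycles every initial cycle is a vertex, $\ell_\mu = 0$ for all $\mu$, and $C^*(\Lambda)$ is a direct sum of matrix algebras over $\CC$, which has stable rank $1$.)

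Now suppose $\Lambda$ contains a cycle $\lambda$. The strategy is to show $C^*(\Lambda)$ is purely infinite, hence a Kirchberg algebra (it is simple, separable, nuclear, and in the UCT class), and therefore has stable rank $\infty$ by \cite[Proposition~6.5]{MR693043}. For pure infiniteness, since $C^*(\Lambda)$ is already simple, it suffices by Proposition~\ref{stably.finite} to produce a cycle with an entrance: then $C^*(\Lambda)$ contains an infinite projection, and a simple $C^*$-algebra with an infinite projection is purely infinite (every nonzero hereditary subalgebra, being itself simple and stably isomorphic to $C^*(\Lambda)$, contains an infinite projection). So the key step is: a cycle $\lambda$ in a cofinal $k$-graph with no local periodicity must be a cycle with an entrance.

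For that key step, suppose towards a contradiction that $\lambda$ is a cycle with \emph{no} entrance. Then for every $\tau \in r(\lambda)\Lambda$ with $d(\tau) \le d(\lambda^\infty)$ we have $\tau = \lambda^\infty(0,d(\tau))$; in particular there is a unique element of $r(\lambda)\Lambda^{\le\infty}$, namely $\lambda^\infty$, and by the local-convexity/row-finiteness machinery of Section~\ref{sec.1} (cf. Lemma~\ref{lem.init2}, Proposition~\ref{ell.mu}) the forward orbit of $r(\lambda)$ is "rigid". I would then use cofinality to propagate this rigidity: given any $v \in \Lambda^0$, cofinality gives $n$ with $s(v\Lambda^{\le n}) \subseteq s(\lambda^\infty) = \{r(\lambda)\}$ (the vertices on the cycle $\lambda$), and combining with the uniqueness of the infinite path from $r(\lambda)$ forces $|v\Lambda^{\le\infty}| = 1$ for every $v$. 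With such strong rigidity one shows directly that $\Lambda$ has local periodicity $d(\lambda), 0$ at $r(\lambda)$ — i.e. $\sigma^{d(\lambda) \wedge d(x)}(x) = \sigma^{0}(x) = x$ for the unique $x = \lambda^\infty \in r(\lambda)\Lambda^{\le\infty}$, since $\sigma^{d(\lambda)}(\lambda^\infty) = \lambda^\infty$ — contradicting the no-local-periodicity hypothesis. Hence $\lambda$ has an entrance, completing the argument.

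I expect the main obstacle to be the key step: carefully leveraging cofinality together with "no entrance" to deduce that $r(\lambda)$ admits a unique boundary path and that this forces local periodicity. The delicate point is matching the $\sigma^m \wedge d(x) = \sigma^n \wedge d(x)$ and $m - (m\wedge d(x)) = n - (n \wedge d(x))$ conditions in the definition of local periodicity precisely, using that $\sigma^{d(\lambda)}(\lambda^\infty) = \lambda^\infty$ on the one boundary path issuing from $r(\lambda)$, and then checking that cofinality really does reduce the general vertex to this situation; the rest of the argument (passing from an infinite projection in a simple algebra to pure infiniteness, and from pure infiniteness to stable rank $\infty$) is standard and cited.
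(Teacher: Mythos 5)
Your handling of the no-cycle case is fine (the paper instead just quotes \cite[Corollary~5.7]{MR2920846} to get $C^*(\Lambda)\cong M_{\Lambda v}(\CC)$, and for the cycle case quotes the same corollary for pure infiniteness), and your overall strategy for the cycle case --- produce a cycle with an entrance, get infiniteness, conclude $sr=\infty$ --- is viable: it is essentially Theorem~\ref{thm.cofinal} specialised to the simple case. But your key step is argued incorrectly. The condition ``$\lambda$ has no entrance'' only constrains paths $\tau\in r(\lambda)\Lambda$ with $d(\tau)\le d(\lambda^\infty)$, i.e.\ paths whose degree is supported on the colours occurring in $\lambda$; it says nothing about paths leaving $r(\lambda)$ in the remaining colour directions. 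You are implicitly conflating ``cycle with no entrance'' with ``initial cycle'' (which additionally demands $r(\mu)\Lambda^{e_i}=\emptyset$ when $d(\mu)_i=0$ --- exactly the hypothesis that Lemma~\ref{lem.init2} needs to produce the rigidity you invoke). Consequently $r(\lambda)\Lambda^{\le\infty}$ need not be $\{\lambda^\infty\}$ and your derivation of local periodicity $d(\lambda),0$ \emph{at $r(\lambda)$} collapses. The $2$-graph of Figure~\ref{pic2} is a concrete counterexample to the claimed rigidity: it is cofinal, the red loop at $v$ is a cycle with no entrance, yet $|v\Lambda^{(1,0)}|=2$, so $v$ supports more than one boundary path. (That graph does have local periodicity, but it is found at the vertices on the \emph{initial} cycle, not at $v$.) A correct version of your reduction goes through the structure theory: if no cycle has an entrance, Theorem~\ref{direct.sum.thm} and Proposition~\ref{ell.mu} exhibit $C^*(\Lambda)$ as a direct sum of algebras $M_{n_\mu}(C(\TT^{\ell_\mu}))$, and the presence of any cycle forces (via local convexity and Lemma~\ref{unit.sum}\eqref{part2.get}) some initial cycle to be nontrivial, hence some $\ell_\mu\ge1$, so $C^*(\Lambda)$ cannot be simple.

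The second gap is the assertion that a simple $C^*$-algebra containing an infinite projection is purely infinite. This is false: R{\o}rdam has constructed a simple, separable, nuclear, unital $C^*$-algebra containing both a finite and an infinite projection, and such an algebra is not purely infinite. Your justification fails precisely because ``contains an infinite projection'' is not preserved under stable isomorphism: if $p$ is a finite projection in such an algebra $A$, then $pAp$ is a full hereditary subalgebra, simple and stably isomorphic to $A$, yet contains no infinite projection (any projection dominated by a finite one is finite). Fortunately you do not need pure infiniteness to finish: in a simple $C^*$-algebra an infinite projection is properly infinite (Cuntz), and a full properly infinite projection dominates $1\oplus 1$ in the Cuntz sense, so the unit of $C^*(\Lambda)$ is properly infinite and \cite[Proposition~6.5]{MR693043} gives $sr(C^*(\Lambda))=\infty$; alternatively, once you know some cycle has an entrance you can simply invoke Theorem~\ref{thm.cofinal}, which establishes proper infiniteness of the unit directly from cofinality.
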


\begin{proof}
If $\Lambda$ contains no cycles then \cite[Corollary~5.7]{MR2920846} gives $C^*(\Lambda)\cong
M_{\Lambda v}(\CC)$ for some vertex $v \in \Lambda^0$. Using \cite[Proposition~1.7 and
Theorem~3.6]{MR693043} we obtain that $sr(C^*(\Lambda))=1$.

If $\Lambda$ contains a cycle, then another application of \cite[Corollary~5.7]{MR2920846} (see
also \cite[Remark~5.8]{MR4163862}) gives that $C^*(\Lambda)$ is purely infinite. Since
$C^*(\Lambda)$ is unital, simple and purely infinite it contains two isometries with orthogonal
ranges, so \cite[Proposition~6.5]{MR693043} gives $sr(C^*(\Lambda))=\infty$.
\end{proof}

In conclusion, the stable rank of a unital simple $k$-graph $C^*$-algebra is completely determined
by presence or absence of a cycle in the $k$-graph.

\subsection{The cofinal case}
We now consider the cofinal case. We start by recalling a result of Jeong, Park and Shin about
directed graphs (or $1$-graphs). We refer to \cite{MR1868695}  for the terminology involved.
\begin{prop}[{\cite[Proposition~3.7]{MR1868695}}]
\label{loc.fin.cofinal} Let $E$ be a locally finite directed graph. If $E$ is cofinal then either
$sr(C^*(E))= 1$ or $C^*(E)$ is purely infinite simple.
\end{prop}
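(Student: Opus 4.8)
The plan is to prove this by a dichotomy on whether $E$ contains a cycle with an entrance. Observe first that for a $1$-graph a cycle with an entrance is the same as a cycle with an exit in the classical sense: since $d(\lambda^\infty)=\infty$, a cycle $\lambda$ has an entrance exactly when some path in $r(\lambda)E$ is not an initial segment of $\lambda^\infty$, i.e., exactly when some vertex on $\lambda$ emits an edge not lying on $\lambda$. If $E$ has no cycle with an entrance I would prove $sr(C^*(E))=1$; if $E$ has a cycle with an entrance I would prove $C^*(E)$ is purely infinite simple. These two cases together give the statement.

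\emph{Case 1: no cycle with an entrance.} Here the plan is to use the structure theory of \cite{MR2920846} (compare Theorem~\ref{direct.sum.thm} and the proof of Proposition~\ref{s.r.simple}): for such a graph, $C^*(E)$ is Morita equivalent to a direct sum $\bigoplus_i C(\TT^{l_i})$ with each $l_i\in\{0,1\}$, since the periodicity group of an initial cycle in a $1$-graph is a subgroup of $\ZZ$ and so has rank at most $1$. Cofinality of $E$ forces this direct sum to have a single summand, so $C^*(E)$ is Morita equivalent to $\CC$ or to $C(\TT)$, both of which have stable rank one; since stable rank one passes through Morita equivalence for these algebras (see Section~\ref{sec.1.3}), $sr(C^*(E))=1$. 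Alternatively one could quote the characterisation of stable rank for graph $C^*$-algebras from \cite{MR2001940, MR1868695} directly.

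\emph{Case 2: a cycle with an entrance.} The plan here is, first, to upgrade the hypothesis to Condition~(L) --- that \emph{every} cycle of $E$ has an entrance --- and then to deduce pure infiniteness. For the upgrade, suppose for contradiction that $\nu$ is a cycle with no entrance. For a $1$-graph this forces each vertex on $\nu$ to emit exactly one edge, the one lying on $\nu$; hence no path can leave $\nu$ once it meets it, and $s(r(\nu)E)$ is precisely the set of vertices on $\nu$. Let $\mu$ be a cycle \emph{with} an entrance and put $w:=r(\mu)$. Applying cofinality to the pair $(r(\nu),w)$ yields $n$ with $s(wE^{\leq n})\subseteq s(r(\nu)E)$; but $w$ lies on the cycle $\mu$, so a large enough power of $\mu$ has an initial segment $\alpha$ with $d(\alpha)=n$, and then $s(\alpha)$ --- a vertex on $\mu$ --- lies in $s(wE^{\leq n})$, hence on $\nu$. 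Following the unique outgoing edges from that common vertex identifies $\mu$ with $\nu$, contradicting that $\mu$ has an entrance and $\nu$ does not. With Condition~(L) established, cofinality gives that $C^*(E)$ is simple \cite{MR2534246} (for a $1$-graph ``no local periodicity'' is exactly Condition~(L)), and since $E$ contains a cycle, \cite[Corollary~5.7]{MR2920846} shows $C^*(E)$ is purely infinite. This completes Case~2.

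The hard part is the combinatorial step in Case~2: promoting ``some cycle has an entrance'' to the full Condition~(L) by playing it against cofinality. One must also be a little careful about the distinction between $E^{\leq n}$ and $E^n$ (they agree in the absence of sinks) and about the possible presence of sinks --- but a sink together with a cycle is already incompatible with cofinality, so this is routine bookkeeping.
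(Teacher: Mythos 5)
The paper does not actually prove this statement: Proposition~\ref{loc.fin.cofinal} is imported verbatim from \cite[Proposition~3.7]{MR1868695}, so there is no internal proof to compare yours against. Judged on its own terms, your argument is essentially sound and is a reasonable way to reprove the result. The combinatorial heart of your Case~2 is correct and is exactly the right mechanism: an entrance-less cycle $\nu$ absorbs every path with range on it, so $s(r(\nu)E)$ is the vertex set of $\nu$; cofinality applied to the pair $(r(\nu),r(\mu))$ then drags the cycle $\mu$ (which has an entrance) onto $\nu$, and the uniqueness of outgoing paths along $\nu$ contradicts the entrance of $\mu$. Your observation that a degenerate vertex together with a cycle already violates cofinality is also correct, as is the identification, for $1$-graphs, of ``cofinal $+$ Condition~(L)'' with ``cofinal $+$ no local periodicity'', which feeds into the simplicity criterion of \cite{MR2534246} and then the standard ``simple $+$ contains a cycle $\Rightarrow$ purely infinite'' dichotomy.

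The one genuine soft spot is Case~1. The structure results you invoke (Theorem~\ref{direct.sum.thm} and \cite[Proposition~5.9]{MR2920846}) are stated for $k$-graphs with \emph{finitely many vertices}, whereas a locally finite directed graph may have $|E^0|=\infty$; so ``$C^*(E)$ is Morita equivalent to $\bigoplus_i C(\TT^{l_i})$ with one summand'' does not literally follow from the cited theorems. The conclusion is still true and the repair is routine: if $E$ is cofinal with no cycle with an entrance, then either $E$ has no cycle at all, in which case $C^*(E)$ is AF and has stable rank one, or $E$ has an entrance-less cycle $\nu$, in which case a Case~2-style argument shows every vertex flows into $\nu$ and $C^*(E)$ is stably isomorphic to $C(\TT)\otimes\Kk$ or to $\Kk$, again of stable rank one. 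You should make this case split explicit rather than lean on the finite-vertex structure theorem. Also, your fallback of ``quote the characterisation from \cite{MR1868695} directly'' is circular in spirit, since the proposition being proved is from that very paper; \cite{MR2001940} is the safer citation. With those two adjustments the proof is complete.
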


\begin{remark}
\label{counter.example} We illustrate why for $k$-graphs we can not hope for a result similar to
Proposition~\ref{loc.fin.cofinal}.
\begin{figure}
\begin{center}
\begin{tikzpicture}
  \node[circle, inner sep=0pt] (v1) at (-2.5,0) {$e $};
  \node[circle, inner sep=0pt] (v2) at (0.5, 0) {$a$};
    \node[circle, inner sep=0pt] (v2) at (-0.5,-1) {$b$};
   \node[inner sep=2.8pt, circle] (28) at (-1,0) {$v$};	
   \path[->,every loop/.style={looseness=14}, >=latex] (28)
			 edge  [in=150,out=210,loop, red, dashed, >=latex] ();
				
   \path[->,every loop/.style={looseness=14}, >=latex] (28)
			 edge  [in=150+90,out=210+90,loop, blue, >=latex] ();
			
   \path[->,every loop/.style={looseness=14}, >=latex] (28)
			 edge  [in=150+180,out=210+180,loop, blue, >=latex] ();			
			 		
\end{tikzpicture}\ \ \ \ \ \ \ \ \
\caption{Example of $2$-graph with a $C^*$-algebra of stable rank infinity.}\label{non-cof}
\end{center}
\end{figure}
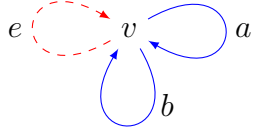
Consider the $2$-graph $\Lambda$ in Figure~\ref{non-cof} with two blue edges $a,b\in
\Lambda^{e_1}$ and one red edge $e\in \Lambda^{e_2}$ and the factorisation property $ae=ea$,
$be=eb$. Since $\Lambda$ has only one vertex, it is automatically cofinal. However, $C^*(\Lambda)$
neither has stable rank one nor is purely infinite simple as the following discussion shows:

The $C^*$-algebra $C^*(\Lambda)$ fails to have stable rank one because it is not stably finite
(containing a cycle with an entrance). It is not simple, so in particular not purely infinite
simple because \begin{equation} \label{loc.new} \textrm{for every }x \in  v\Lambda^{\leq
\infty}\textrm{ we have } \sigma^{e_2}(x)=x,
\end{equation}
so $\Lambda$ has local periodicity $p=e_2, q=0 $ at $v$ and $C^*(\Lambda)$ is non-simple.

Because of our particular choice of factorisation rules $ae=ea$, $be=eb$ Lemma~$\ref{examples12}$
implies that $C^*(\Lambda)\cong \mathcal{O}_2\otimes C(\TT)$. If we instead used the factorisation
$ae=eb$, $be=ea$, then Lemma~$\ref{examples12}$ would not apply, but we would still have
$\sigma^{2e_2}(x)=x$ for each $x\in v\Lambda^{\leq \infty}$ making $C^*(\Lambda)$ non-simple.
\end{remark}

Remark~\ref{counter.example} notwithstanding, we are able to provide a characterisation of stable
rank in the cofinal case. Given a $C^*$-algebra $A$, we write $a\oplus b$ for the diagonal matrix
$\textrm{diag}(a,b)$ in $M_2(A)$ and write $\sim$ for the von Neumann equivalence relation between
elements in matrix algebras over $A$. A unital $C^*$-algebra $A$ is \emph{properly infinite} if
$1\oplus 1\oplus r \sim 1$ for some projection $r$ in some matrix algebra over $A$ (for more
details see \cite{MR1878881}).

\begin{thm}
\label{thm.cofinal} Let $\Lambda$ be a cofinal, finite, locally convex $k$-graph. Suppose that
$\Lambda$ contains a cycle with an entrance. Then $C^*(\Lambda)$ is properly infinite and has
stable rank $\infty$.
\end{thm}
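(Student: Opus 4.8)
The plan is to prove proper infiniteness first and then deduce $sr(C^*(\Lambda)) = \infty$ from a known result of Rieffel. To establish proper infiniteness, I would argue that the unit $1 = \sum_{v \in \Lambda^0} s_v$ is a properly infinite projection. Since $\Lambda$ contains a cycle $\lambda$ with an entrance, Proposition~\ref{stably.finite} tells us $C^*(\Lambda)$ contains an infinite projection $p$; moreover the cycle-with-entrance data (via \cite[Corollary~3.8]{MR2920846} or a direct Cuntz--Krieger computation) actually produces a projection $q \le s_{r(\lambda)}$ together with \emph{two} isometries in $q C^*(\Lambda) q$ with orthogonal ranges summing to (a subprojection of) $q$, i.e.\ $q$ dominates a copy of a Cuntz-algebra unit and is therefore properly infinite, not merely infinite. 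The key mechanism is that an entrance $\tau$ to the cycle gives a path disjoint from $\lambda^\infty$, so $s_{\lambda^n} s_{\lambda^n}^* $ and $s_\tau s_\tau^*$ (suitably extended to degree $d(\mu^n)\vee d(\tau)$) are orthogonal subprojections of $s_{r(\lambda)}$, while $s_{\lambda^n}$ is an isometry from $s_{r(\lambda)}$ back onto a proper subprojection --- iterating and using the universal property yields two orthogonal isometries below $s_{r(\lambda)}$.

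Next I would propagate proper infiniteness from the corner $s_{r(\lambda)} C^*(\Lambda) s_{r(\lambda)}$ to all of $C^*(\Lambda)$ using cofinality. Cofinality of $\Lambda$ means that for every vertex $v$ there is $n \in \NN^k$ with $s(v\Lambda^{\le n}) \subseteq s(r(\lambda)\Lambda) = \{$vertices reachable from $r(\lambda)\}$; dually, every vertex projection $s_v$ is Murray--von Neumann \emph{subequivalent} to a (sum of) projections supported on paths into the orbit of $r(\lambda)$, hence $s_v \precsim 1 \otimes$ (corner containing a properly infinite projection). Concretely, one shows $1 = \sum_v s_v$ decomposes via relation \eqref{it:CK4} so that each $s_v$ is equivalent to $\sum_{\eta \in v\Lambda^{\le n}} s_\eta s_\eta^*$, and by choosing $n$ large (using cofinality) each $s_\eta s_\eta^*$ lies under a projection equivalent to a subprojection of $s_{r(\lambda)}$; since $s_{r(\lambda)}$ is properly infinite and properly infinite projections absorb subprojections in the sense that $p$ properly infinite and $q \precsim p$ implies $p \oplus q \sim p$, one assembles $1 \oplus 1 \oplus r \sim 1$ for a suitable $r$, which is exactly the definition of $1$ being properly infinite, i.e.\ $C^*(\Lambda)$ properly infinite.

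Finally, the stable-rank conclusion is immediate: a unital properly infinite $C^*$-algebra contains two isometries with orthogonal ranges (indeed that is essentially equivalent to proper infiniteness for the unit), so by \cite[Proposition~6.5]{MR693043} it has stable rank $\infty$. I would cite this cleanly rather than reprove it.

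The main obstacle I anticipate is the middle step --- turning the combinatorial cofinality condition $s(w\Lambda^{\le n}) \subseteq s(v\Lambda)$ into the operator-algebraic statement that every $s_v$ is subequivalent to a projection in the properly infinite corner, uniformly enough to conclude proper infiniteness of $1$ itself. The delicate points are: (i) the index $n$ witnessing cofinality depends on the pair of vertices, so one must take a common refinement over the finitely many vertices; (ii) one needs that ``reachable from $r(\lambda)$'' projections are genuinely subequivalent (not just that a path exists) to the corner $s_{r(\lambda)} C^*(\Lambda) s_{r(\lambda)}$, which uses that for $\eta \in r(\lambda)\Lambda w$ the element $s_\eta$ is a partial isometry implementing $s_w \sim s_\eta s_\eta^* \le s_{r(\lambda)}$; and (iii) one must invoke the right absorption lemma for properly infinite projections (e.g.\ from \cite{MR1878881}) to go from ``$1$ dominates a properly infinite projection and every summand is subequivalent to it'' to ``$1$ is properly infinite''. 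A clean way to package (iii) is: if $p$ is properly infinite and full in a unital $C^*$-algebra $A$, and $1_A$ is a finite sum of projections each subequivalent to $p$, then $1_A$ is properly infinite; this should follow from standard manipulations with $\oplus$ and the fact that $p$ properly infinite gives $p \sim p \oplus p$.
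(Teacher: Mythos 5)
Your overall architecture---extract infiniteness from the cycle with an entrance, propagate it around the graph using cofinality, assemble $1\oplus 1\oplus r\sim 1$, and finish with \cite[Proposition~6.5]{MR693043}---is the same as the paper's, and your middle and final steps (the subequivalences between vertex projections coming from $s(v\Lambda^{\le n})\subseteq s(u\Lambda)$, and the absorption argument for properly infinite projections) are essentially what the paper does. However, your first step contains a genuine gap: you claim that the cycle-with-entrance data alone already yields two isometries with orthogonal ranges in a corner $qC^*(\Lambda)q$ with $q\le s_{r(\lambda)}$, so that this corner is properly infinite \emph{before} cofinality enters. What the entrance actually gives is a single non-unitary isometry of the corner $s_{r(\lambda)}C^*(\Lambda)s_{r(\lambda)}$: by relation~(\ref{it:CK4}), $s_{\lambda^n}^*s_{\lambda^n}=s_{r(\lambda)}$ while $s_{\lambda^n}s_{\lambda^n}^*$ is orthogonal to the nonzero projection $s_{\tau\beta}s^*_{\tau\beta}\le s_{r(\lambda)}$. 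That is a copy of the Toeplitz algebra $\mathcal{T}$, whose unit is infinite but \emph{not} properly infinite (it survives in the quotient $C(\TT)$, which admits a character). To produce a second isometry with initial projection $s_{r(\lambda)}$ and range under $s_{\tau\beta}s^*_{\tau\beta}$ you must carry $s_{r(\lambda)}$ into the corner of the vertex $s(\tau\beta)$, i.e.\ you need paths from $s(\tau\beta)$ back into the orbit of $r(\lambda)$---and that is exactly a use of cofinality, not of the entrance. The $1$-graph with one loop $e$ at $v$ and one extra edge $f\in v\Lambda^{1}$ with $s(f)=w\ne v$ gives $C^*(\Lambda)\cong\mathcal{T}$ with $s_v$ infinite but not properly infinite; it is not cofinal, so it does not contradict the theorem, but it shows your step~1 is unjustified as stated, and \cite[Corollary~3.8]{MR2920846} only supplies an infinite projection, not a properly infinite one.

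The repair is a reordering rather than a new idea, and it is what the paper does: first record $s_{v_1}\sim s_{v_1}\oplus s_{v_2}\oplus\cdots\oplus s_{v_N}$ where $r(\lambda)\Lambda^{\le m}=\{\nu_1=\lambda^n,\nu_2=\tau\beta,\dots\}$ and $v_i=s(\nu_i)$; then use cofinality to show $\bigoplus_{l=1}^{M}s_{v_2}\sim s_{v_1}\oplus p$ for some projection $p$; combining these yields $s_{v_1}\sim s_{v_1}\oplus s_{v_1}\oplus q$, which is the proper infiniteness of the corner you wanted. From there your propagation step and your absorption lemma (iii) go through and give $1\oplus 1\oplus r\sim 1$, and the citation of \cite[Proposition~6.5]{MR693043} is the correct way to conclude $sr(C^*(\Lambda))=\infty$.
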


\begin{proof}
Let $\mu$ be a cycle with an entrance $\tau$, that is
\[
\tau\in r(\mu)\Lambda, \qquad d(\tau)\leq d(\mu^\infty), \qquad\text{and}\qquad \tau\neq \mu^\infty(0,d(\tau)).
\]
Fix $n\geq 1$ such that $m:=d(\mu)n\geq d(\tau)$. Since $\tau\neq
\mu^\infty(0,d(\tau))=\mu^n(0,d(\tau))$, there exists $\beta\in s(\tau)\Lambda$ such that $\mu^n$
and $\tau\beta$ are distinct elements of $r(\mu)\Lambda^{\leq m}$. Write $r(\mu)\Lambda^{{\leq
m}}=\{\nu_1, \dots, \nu_N\}$ with $\nu_1=\mu^n$ and $\nu_2=\tau\beta$. For each $i=1,\dots,N$ set
$v_i=s(\nu_i)$ and let $x=(s_{\nu_1}, \dots, s_{\nu_N})$.  Then $xx^*= \sum_{\lambda \in
{v_1}\Lambda^{{\leq m}}} s_\lambda s_\lambda^*=s_{v_1}$. Moreover for $i\neq j$, $
s_{\nu_i}^*s_{\nu_j}=0$, so
$$s_{v_1}=xx^*\sim x^*x=\textrm{diag}(s_{\nu_1}^*s_{\nu_1}, \dots, s_{\nu_N}^*s_{\nu_N})=s_{v_1}\oplus\cdots\oplus s_{v_N}.$$

We claim that for any pair of vertices $u,v \in \Lambda^0$ there exist a constant $M_{u,v}$ and a
projection $p_{u,v}$ in some matrix algebra over $C^*(\Lambda)$ such that
\begin{equation}
\label{v.sim.u}
\Big(\bigoplus_{l=1}^{M_{u,v}} s_{u}\Big) \sim s_{v} \oplus p_{u,v}.
\end{equation}

To see this, fix $u,v\in \Lambda^0$. Since $\Lambda$ is cofinal there exists $n\in \NN^k$ such
that $s(v\Lambda^{\leq n})\subseteq s(u\Lambda)$.  Writing $v\Lambda^{\leq n}=\{\mu_1, \dots,
\mu_{M_{u,v}}\}$ and $u_i=s(\mu_i)$, we have $s_{v}\sim s_{u_1}\oplus\cdots\oplus
s_{u_{{M_{u,v}}}}$. Since $s(v\Lambda^{\leq n})=\{u_i: i\leq {M_{u,v}}\}\subseteq s(u\Lambda)$,
for each $i\leq M_{u,v}$ there exists $\lambda_i\in u\Lambda$ such that $s(\lambda_i)=u_i$. Let
$m_i=d(\lambda_i)$ for each $i$. Then for each $i=1,\dots,{M_{u,v}}$,
$$s_u=\sum_{\lambda\in u\Lambda^{\leq m_i}}s_\lambda s_\lambda^*\sim s_{u_i}\oplus p_i$$
for some projection $p_i$ in a matrix algebra over $C^*(\Lambda)$. With $p_{u,v}=\bigoplus_i p_i$
we obtain
\[
\Big(\bigoplus_{l=1}^{M_{u,v}} s_{u}\Big) \sim \Big(\bigoplus_{i=1}^{M_{u,v}} s_{u_i}\Big)\oplus p_{u,v}\sim s_{v} \oplus p_{u,v},
\]
which establishes the claim.

Applying \eqref{v.sim.u} to $u=v_2$ and $v=v_1$ we get
$$\Big(\bigoplus_{l=1}^{M_{v_2,v_1}} s_{v_2}\Big)\sim s_{v_1} \oplus p_{v_2,v_1}.$$
Recall that $s_{v_1}\sim s_{v_1}\oplus s_{v_2}\oplus \big(\bigoplus_{i=3}^N s_{v_i}\big)$. Let
$q\mydef  p_{v_2,v_1}\oplus \big(\bigoplus_{l=1}^{M_{v_2,v_1}}\bigoplus_{i=3}^N s_{v_i}\big)$,
meaning that if $N=2$ then $q=  p_{v_2,v_1} \oplus 0$. Then
\begin{align}
\label{v1.prop.inf}
s_{v_1}\sim s_{v_1}\oplus \Big(\bigoplus_{l=1}^{M_{v_2,v_1}} s_{v_2}\Big)\oplus \Big(\bigoplus_{l=1}^{M_{v_2,v_1}} \bigoplus_{i=3}^N s_{v_i}\Big)\sim s_{v_1}\oplus s_{v_1}\oplus q.
\end{align}
Applying \eqref{v.sim.u} to $u=v_1$ and to each $v\in \Lambda^0\setminus \{v_1\}$ at the second
equality, and putting $L := 2 + \sum_{v \in \Lambda^0\setminus\{v_1\}} M_{v_1,v}$, we calculate:
\begin{align*}
1\oplus 1 \oplus \Big(\bigoplus_{v\in \Lambda^0\setminus \{v_1\}} p_{v_1,v} \Big)
    &\sim 1\oplus s_{v_1} \oplus \Big(\bigoplus_{v\in \Lambda^0\setminus \{v_1\}}(s_{v}\oplus  p_{v_1,v} )\Big) \\
    &\sim 1\oplus s_{v_1} \oplus \Big(\bigoplus_{v\in \Lambda^0\setminus \{v_1\}}\Big(\bigoplus_{i=1}^{M_{v_1,v}} s_{v_1}\Big)\Big)\\
    &\sim \Big(\bigoplus_{v\in \Lambda^0\setminus \{v_1\}}s_v\Big)\oplus s_{v_1} \oplus s_{v_1} \oplus \Big(\bigoplus_{v\in \Lambda^0\setminus \{v_1\}}\Big(\bigoplus_{i=1}^{M_{v_1,v}} s_{v_1}\Big)\Big)\\
    &\sim \Big(\bigoplus_{v\in \Lambda^0\setminus \{v_1\}}s_v\Big)\oplus \Big(\bigoplus_{j=1}^L s_{v_1}\Big).
\end{align*}
Using \eqref{v1.prop.inf} we have $s_{v_1}\sim s_{v_1}\oplus \big(\bigoplus_{j=1}^{L-1}
s_{v_1}\big) \oplus \big(\bigoplus_{j=1}^{L-1} q\big)$, so $r= \big(\bigoplus_{v\in
\Lambda^0\setminus \{v_1\}} p_{v_1,v} \big) \oplus \big(\bigoplus_{j=1}^{L-1} q\big)$ satisfies
$$1\oplus 1\oplus r \sim 1.$$
Hence $1$ is properly infinite. Now \cite[Proposition~6.5]{MR693043} gives
$sr(C^*(\Lambda))=\infty$.\end{proof}

With Proposition~\ref{loc.fin.cofinal} in mind, the following is a dichotomy for the
$C^*$-algebras associated to cofinal finite $k$-graphs.

\begin{cor}
\label{cor.4.5} Let $\Lambda$ be a cofinal, finite, locally convex $k$-graph. Then either
$C^*(\Lambda)$ is stably finite and $sr(C^*(\Lambda))$ is given by Corollary~\ref{stab.fin}, or
$C^*(\Lambda)$ is properly infinite and $sr(C^*(\Lambda))=\infty$.
\end{cor}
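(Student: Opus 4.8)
The plan is to prove this corollary by combining the earlier dichotomy results already established for cofinal $k$-graphs. The key observation is that Proposition~\ref{stably.finite} gives a clean trichotomy-turned-dichotomy: a finite, locally convex $k$-graph $\Lambda$ either has no cycle with an entrance, in which case $C^*(\Lambda)$ is stably finite, or it has a cycle with an entrance, in which case $C^*(\Lambda)$ is not stably finite. These two cases are mutually exclusive and exhaustive, so the corollary reduces to handling each one separately.

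First I would observe that if $\Lambda$ has no cycle with an entrance, then by Proposition~\ref{stably.finite} (specifically $\neg\eqref{inf.1}\Rightarrow\neg\eqref{inf.5}$, which holds via Theorem~\ref{direct.sum.thm}) $C^*(\Lambda)$ is stably finite, and Corollary~\ref{stab.fin}---which requires exactly the hypothesis that $\Lambda$ is finite, locally convex, with no cycle with an entrance---gives the stated formula for $sr(C^*(\Lambda))$. This case does not even use cofinality. Second, if $\Lambda$ \emph{does} have a cycle with an entrance, then since $\Lambda$ is cofinal, finite, and locally convex by hypothesis, Theorem~\ref{thm.cofinal} applies directly and tells us that $C^*(\Lambda)$ is properly infinite with $sr(C^*(\Lambda)) = \infty$. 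Since every finite, locally convex $k$-graph falls into exactly one of these two cases, the dichotomy follows immediately.

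There is essentially no obstacle here: the corollary is a bookkeeping consequence of Proposition~\ref{stably.finite}, Corollary~\ref{stab.fin}, and Theorem~\ref{thm.cofinal}, with cofinality used only to invoke the second of these. The only point requiring the tiniest care is noting that the two cases are genuinely complementary---which is immediate from the definition of "cycle with an entrance"---and that "properly infinite" entails "not stably finite" (indeed a properly infinite unital $C^*$-algebra contains an infinite projection, namely its unit), so the two alternatives in the statement are exclusive as well as exhaustive. I would write the proof in three or four lines: split on whether $\Lambda$ has a cycle with an entrance, cite Corollary~\ref{stab.fin} in the first case and Theorem~\ref{thm.cofinal} in the second.
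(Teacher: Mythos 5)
Your proposal is correct and matches the paper's own proof: both split on the dichotomy from Proposition~\ref{stably.finite} (equivalently, stably finite versus having a cycle with an entrance), cite Corollary~\ref{stab.fin} in the stably finite case and Theorem~\ref{thm.cofinal} in the other, and note that proper infiniteness excludes stable finiteness so the alternatives are exclusive. No issues.
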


\begin{proof}
If $C^*(\Lambda)$ is not stably finite then $\Lambda$ contains a cycle with an entrance by
Proposition~\ref{stably.finite}. Hence $C^*(\Lambda)$ is properly infinite and
$sr(C^*(\Lambda))=\infty$ by Theorem~\ref{thm.cofinal}. Conversely, if  $C^*(\Lambda)$ is properly
infinite, then it is also infinite, so $C^*(\Lambda)$ is not finite and hence not stably finite.
If $C^*(\Lambda)$ is stably finite then Corollary~\ref{stab.fin} applies.
\end{proof}

\begin{remark}
Theorem~\ref{thm.cofinal} includes cases not covered by any of our preceding results. Consider for
example the $2$-graph $\Lambda$ in Figure~\ref{non-cof}. By Theorem~\ref{thm.cofinal} the
associated $C^*$-algebra has stable rank infinity.
\end{remark}

\begin{example}
By Corollary~\ref{cor.4.5}, we can compute the stable rank of $k$-graph $C^*$-algebras in boxes
1~to~3 in Figure~\ref{fbuha}. It therefore makes sense to consider the range of stable rank
achieve by these $C^*$-algebra. In box~3 stable rank infinity can be obtained as in
Remark~\ref{counter.example}. For finite stable rank, Table~\ref{table.2} lists a few $4n$-graphs
$\Lambda$ together with their associated $C^*$-algebra and its stable rank (we use a multiple of 4
because it makes the formulas for the stable rank simpler).
\begin{table}
\caption{A few examples of $4n$-graphs.}\label{table.2}
\setlength{\tabcolsep}{5mm} 
\def\arraystretch{1.25} 
\centering
  \begin{tabular}{|c|c|c|c|}
      \hline

      $4n$-graph $\Lambda$  & $|\Lambda^{\leq N}|$  &   $C^*(\Lambda)$    &  $sr( C^*(\Lambda))$      \\ \hline

\begin{minipage}{0.13\textwidth}\begin{tikzpicture}
\def \margin {6}
  \node[circle, inner sep=0pt] (v0) at (-1.8,.8) {$4n$};
  \node[inner sep=2.8pt, circle] (27) at (-2.5,0) {$v_1$};	
   \path[->,every loop/.style={looseness=12}, >=latex] (27)
			 edge  [in=150-90,out=210-90,loop, black, >=latex] ();
\end{tikzpicture}\end{minipage}   &1 &    $C(\TT^{4n})$  & $2n+1$    \\ \hline

\begin{minipage}{0.25\textwidth}\begin{tikzpicture}
\def \margin {6}
  \node[circle, inner sep=0pt] (v0) at (-1.8,.8) {$4n$};
  \node[inner sep=2.8pt, circle] (27) at (-2.5,0) {$v_1$};	
    \node[inner sep=2.8pt, circle] (28) at (-5,0) {$w_1$};	
   \path[->,every loop/.style={looseness=12}, >=latex] (27)
			 edge  [in=150-90,out=210-90,loop, black, >=latex] ();
   \path[->] (27) edge [below] node  {$4n$} (28);
\end{tikzpicture}\end{minipage}  &2  &     $M_2(C(\TT^{4n}))$   & $n+1$    \\ \hline
\begin{minipage}{0.25\textwidth}\begin{tikzpicture}
\def \margin {6}
  \node[circle, inner sep=0pt] (v0) at (-1.8,.8) {$4n$};
  \node[inner sep=2.8pt, circle] (27) at (-2.5,0) {$v_1$};	
    \node[inner sep=2.8pt, circle] (28) at (-5,0) {$w_1$};	
       \node[inner sep=2.8pt, circle] (29) at (-5,.9) {$w_2$};	
   \path[->,every loop/.style={looseness=12}, >=latex] (27)
			 edge  [in=150-90,out=210-90,loop, black, >=latex] ();
   \path[->] (27) edge [below] node  {$4n$} (28);
      \path[->] (27) edge [above] node  {$4n$} (29);
\end{tikzpicture} \end{minipage}  &3  &     $M_3(C(\TT^{4n}))$   &  $ \displaystyle\left\lceil \frac{2n}{3}\right\rceil+1$        \\ \hline

\begin{minipage}{0.25\textwidth}\begin{tikzpicture}
\def \margin {6}
  \node[circle, inner sep=0pt] (v0) at (-1.8,.8) {$4n$};
  \node[inner sep=2.8pt, circle] (27) at (-2.5,0) {$v_1$};	
    \node[inner sep=2.8pt, circle] (28) at (-5,0) {$w_1$};	
           \node[inner sep=2.8pt, circle] (29b) at (-5,.6) {$\vdots$};
                    \node[inner sep=2.8pt, circle] (29b) at (-3.8,.35) {$\vdots$};
       \node[inner sep=2.8pt, circle] (29) at (-5,1) {$w_m$};	
   \path[->,every loop/.style={looseness=12}, >=latex] (27)
			 edge  [in=150-90,out=210-90,loop, black, >=latex] ();
   \path[->] (27) edge [below] node  {$4n$} (28);
      \path[->] (27) edge [above] node  {$4n$} (29);
\end{tikzpicture} \end{minipage}   &$m+1$ &     $M_{m+1}(C(\TT^{4n}))$  & $ \displaystyle\left\lceil \frac{2n}{m+1}\right\rceil+1$   \\ \hline

\begin{minipage}{0.25\textwidth}\begin{tikzpicture}
\def \margin {6}
  \node[circle, inner sep=0pt] (v0) at (-1.8,.8) {$4n$};
  \node[inner sep=2.8pt, circle] (27) at (-2.5,0) {$v_1$};	
    \node[inner sep=2.8pt, circle] (28) at (-5,0) {$v_3$};	
        \node[inner sep=2.8pt, circle] (28b) at (-3.75,0) {$v_2$};
   \path[->,every loop/.style={looseness=12}, >=latex] (27)
			 edge  [in=150-90,out=210-90,loop, black, >=latex] ();
   \path[->] (27) edge [below] node  {$4n$} (28b);
      \path[->] (28b) edge [below] node  {$4n$} (28);
\end{tikzpicture}  \end{minipage}  &  ${4n\choose 2}$ &     $M_{{4n\choose 2}}(C(\TT^{4n}))$   & $\displaystyle \left\lceil \frac{2n}{{4n\choose 2}}\right\rceil+1$     \\ \hline

\begin{minipage}{0.30\textwidth}\begin{tikzpicture}
\def \margin {6}
  \node[circle, inner sep=0pt] (v0) at (-1.8,.8) {$4n$};
\node[circle, inner sep=0pt] (v0) at (-4.3,.8) {$2n$};
  \node[inner sep=2.8pt, circle] (27) at (-2.5,0) {$v_1$};	
    \node[inner sep=2.8pt, circle] (28) at (-5,0) {$v_2$};	
   \path[->,every loop/.style={looseness=12}, >=latex] (27)
			 edge  [in=150-90,out=210-90,loop, black, >=latex] ();
   \path[->,every loop/.style={looseness=12}, >=latex] (28)
			 edge  [in=150-90,out=210-90,loop, black, >=latex] ();
      \path[->] (27) edge [below] node  {$2n$} (28);
\end{tikzpicture}\end{minipage}   &2 &   $M_{2}(C(\TT^{4n}))$,  & $n+1$    \\ \hline
  \end{tabular}
\end{table}

Except for the last $4n$-graph, each black edge represents exactly $4n$ edges of different
colours, one of each colour; the last $4n$-graph has $2n$ loops at $v_2$, one each of the first
$2n$ colours and $2n$ edges from $v_1$ to $v_2$, one each of the remaining $2n$ colours. Each
example admits a unique factorisation rule, so each illustration in Table~\ref{table.2} represents
a unique $4n$-graph.
\end{example}

\section{Stable rank in the non-stably finite, non-cofinal case}
\label{five} So far we have looked at the stably finite case (including stable rank one) and the
cofinal case (including the simple case). Here we study the remaining case corresponding to box 4
in Figure~\ref{fbuha}.

We start by revisiting the cofinality condition for row-finite locally convex $k$-graphs.
Following \cite{MR2534246}, a subset $H\subseteq \Lambda^0$ is \emph{hereditary} if
$s(H\Lambda)\subseteq H$. We say $H$ is \emph{saturated} if for all $v\in \Lambda^0$,
\[
 \{s(\lambda):\lambda\in v\Lambda^{\leq e_i}\}\subseteq H
    \text{ for some } i\in\{1,\dots,k\} \Longrightarrow v\in H.
\]
or equivalently, if $v\not\in H$ implies that for each $n\in \NN^k$, $s(v\Lambda^{\leq
n})\not\subseteq H$ (see Lemma~\ref{saturated}). The relevant characterisation of cofinal is
included in Lemma~\ref{cofinal.result} below with a short proof based on \cite{MR2670219} and
\cite{MR2270926}. Since this paper focuses on unital $k$-graph $C^*$-algebras,  it is worth
pointing out that Lemmas \ref{saturated}~and~\ref{cofinal.result} do not assume that
$|\Lambda^0|<\infty$.

\begin{lemma}
\label{saturated} Let $\Lambda$ be a row-finite locally convex $k$-graph. Then $H\subseteq
\Lambda^0$ is saturated if and only if for all $v\in \Lambda^0$, $v\not\in H$ implies that for
each $n\in \NN^k$, $s(v\Lambda^{\leq n})\not\subseteq H$.
\end{lemma}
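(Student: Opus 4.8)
The plan is to establish the two implications separately, with the ``if'' direction essentially immediate and the ``only if'' direction resting on an induction on $|n| := n_1 + \cdots + n_k$. For the easy direction, assume the stated condition and let $v \in \Lambda^0$ satisfy $\{s(\lambda) : \lambda \in v\Lambda^{\le e_i}\} \subseteq H$ for some $i$; this says exactly $s(v\Lambda^{\le e_i}) \subseteq H$, so applying the condition (in contrapositive form) with $n = e_i$ yields $v \in H$, i.e.\ $H$ is saturated.

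For the converse I would assume $H$ is saturated and prove, by induction on $|n|$, the statement: \emph{for every $n \in \NN^k$ and every $v \in \Lambda^0$, if $s(v\Lambda^{\le n}) \subseteq H$ then $v \in H$} (the contrapositive of what is wanted). The base case $n = 0$ is trivial since $v\Lambda^{\le 0} = \{v\}$. For the inductive step, given $|n| \ge 1$ I would pick $i$ with $n_i \ge 1$, set $n' := n - e_i$, and show that $s(v\Lambda^{\le n'}) \subseteq H$; the inductive hypothesis applied with $n'$ then gives $v \in H$. So fix $\lambda \in v\Lambda^{\le n'}$ and split into two cases. If $s(\lambda)\Lambda^{e_i} = \emptyset$, then $\lambda$ already lies in $v\Lambda^{\le n}$ (the defining conditions for $\Lambda^{\le n}$ hold: for $j = i$ by the case hypothesis, and for $j \ne i$ because $n_j = n'_j$ and $\lambda \in v\Lambda^{\le n'}$), so $s(\lambda) \in s(v\Lambda^{\le n}) \subseteq H$. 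If instead $s(\lambda)\Lambda^{e_i} \ne \emptyset$, then $\lambda \in v\Lambda^{\le n'}$ forces $d(\lambda)_i = n'_i = n_i - 1$, and I would check that $\lambda f \in v\Lambda^{\le n}$ for every $f \in s(\lambda)\Lambda^{e_i}$; granting this, $s(f) = s(\lambda f) \in H$ for all such $f$, and since $s(\lambda)\Lambda^{e_i} \ne \emptyset$ we have $s(\lambda)\Lambda^{\le e_i} = s(\lambda)\Lambda^{e_i}$, so saturation of $H$ gives $s(\lambda) \in H$.

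The verification that $\lambda f \in v\Lambda^{\le n}$ reduces, after checking $d(\lambda f) = d(\lambda) + e_i \le n$, to showing $s(f)\Lambda^{e_j} = \emptyset$ whenever $j \ne i$ and $d(\lambda)_j + 1 \le n_j = n'_j$. For such $j$ we know $s(\lambda)\Lambda^{e_j} = \emptyset$ (again because $\lambda \in v\Lambda^{\le n'}$), and I would propagate this along $f$ using only the factorisation property: if $g \in s(f)\Lambda^{e_j}$ then $fg \in \Lambda^{e_i + e_j}$ factors as $fg = hk$ with $d(h) = e_j$, and $r(h) = r(f) = s(\lambda)$, contradicting $s(\lambda)\Lambda^{e_j} = \emptyset$.

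The step I expect to require the most care is the bookkeeping in the inductive case where $s(\lambda)\Lambda^{e_i} \ne \emptyset$: the crucial observation is that membership in $v\Lambda^{\le n'}$ pins down $d(\lambda)_i = n_i - 1$ exactly, which is what makes the one-edge extension $\lambda f$ land back inside $v\Lambda^{\le n}$ (rather than overshooting), together with the little factorisation lemma that transports ``$s(\,\cdot\,)\Lambda^{e_j} = \emptyset$'' across an edge of colour $e_i \ne e_j$. No genuinely deep input is needed; the only background fact used (implicitly, to ensure the inductive statement is not vacuous) is that $v\Lambda^{\le n} \ne \emptyset$ for all $v$ and $n$ in a row-finite, locally convex $k$-graph, which is standard, cf.\ \cite{MR2139097}.
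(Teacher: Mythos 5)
Your argument is correct, and it is essentially the paper's proof run in contrapositive form: where the paper uses saturation to build greedily, one $e_i$ at a time, a path in $v\Lambda^{\leq m}$ whose source avoids $H$, you show by induction on $|n|$ that the inclusion $s(v\Lambda^{\leq n})\subseteq H$ descends to $s(v\Lambda^{\leq n-e_i})\subseteq H$ --- the same one-colour-at-a-time use of saturation. The only differences are cosmetic: you also record the trivial ``if'' direction (which the paper leaves implicit), and you verify the $\Lambda^{\leq n}$-membership bookkeeping explicitly from the factorisation property (your case split on $s(\lambda)\Lambda^{e_i}=\emptyset$ and the transport of $s(\cdot)\Lambda^{e_j}=\emptyset$ across an $e_i$-edge), where the paper simply asserts $\lambda^{(j)}\in v\Lambda^{\leq (m-n^{(j)})}$.
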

\begin{proof}
Fix $v\in \Lambda^0$. Suppose $v\not\in H$. Since $\Lambda$ is saturated, for all $i\leq k$,
$\{s(\lambda):\lambda\in v\Lambda^{\leq e_i}\}\not\subseteq H$. Clearly $s(v\Lambda^{\leq
m})\not\subseteq H$ for $m=0$. Fix any $m\in \NN^k\setminus\{0\}$. Set
$(n^{(0)},v^{(0)},\lambda^{(0)})=(m,v,v)$. Choose $i$ such that $n^{(0)}_i\neq 0$. Since
$v^{(0)}\not\in H$, there exists $\mu^{(1)}\in v^{(0)}\Lambda^{\leq e_i}\setminus \Lambda H$. Set
 $(n^{(1)},v^{(1)},\lambda^{(1)})=(n^{(0)}-e_i,s(\mu^{(1)}),\lambda^{(0)}\mu^{(1)})$.
Choose $i$ such that $n^{(1)}_i\neq 0$. Since $v^{(1)}\not\in H$, there exists $\mu^{(2)}\in
v^{(1)}\Lambda^{\leq e_i}\setminus \Lambda H$. Set
$(n^{(2)},v^{(2)},\lambda^{(2)})=(n^{(1)}-e_i,s(\mu^{(2)}),\lambda^{(1)}\mu^{(2)})$.

For each step, $|n^{(i)}|=|m|-i$, so $l=|m|$ satisfies $n^{(l)}=0$. Notice that $\lambda^{(0)}\in
v\Lambda^{\leq (m-n^{(0)})}, \lambda^{(1)}\in v\Lambda^{\leq (m-n^{(1)})}, \dots, \lambda^{(l)}\in
v\Lambda^{\leq (m-n^{(l)})}$. Hence $\lambda^{(l)}\in v\Lambda^{\leq m}$ and
$s(\lambda^{(l)})\not\in H$ so $s(v\Lambda^{\leq m})\not\subseteq H$.
\end{proof}

\begin{lemma}[{\cite{MR2670219, MR2270926}}]
\label{cofinal.result} Let $\Lambda$ be a row-finite locally convex $k$-graph. Then the following
are equivalent:
\begin{enumerate}
\item \label{cif.1.1}$\Lambda$ is cofinal;
\item \label{cif.2.1} for all $v\in \Lambda^0$, and $(\lambda_i)$ with $\lambda_i\in
    \Lambda^{\leq (1,\dots, 1)}$, and $s(\lambda_i)=r(\lambda_{i+1})$ there exist $i\in \NN$ and
    $n\leq d(\lambda_i)$ such that $v\Lambda \lambda_i(n) \neq \emptyset$; and
\item \label{cif.3} $\Lambda^0$ contains no nontrivial hereditary saturated subsets.
\end{enumerate}
\end{lemma}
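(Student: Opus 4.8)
The plan is to prove the equivalences by establishing $\eqref{cif.1.1}\Rightarrow\eqref{cif.3}\Rightarrow\eqref{cif.2.1}\Rightarrow\eqref{cif.1.1}$, drawing on the cited sources \cite{MR2670219, MR2270926} but adapting to the locally convex setting. First I would recall the elementary observation (needing Lemma~\ref{saturated}) that a hereditary $H\subseteq\Lambda^0$ is saturated precisely when $v\notin H$ forces $s(v\Lambda^{\leq n})\not\subseteq H$ for every $n\in\NN^k$; this reformulation is what makes saturation directly comparable to the cofinality condition $s(w\Lambda^{\leq n})\subseteq s(v\Lambda)$.

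For $\eqref{cif.1.1}\Rightarrow\eqref{cif.3}$, I would argue by contraposition: suppose $H$ is a nontrivial hereditary saturated subset, pick $w\in H$ and $v\notin H$. Hereditariness gives $s(v\Lambda)\subseteq\Lambda^0\setminus H$ is \emph{not} quite what we want — rather, since $H$ is hereditary, $s(H\Lambda)\subseteq H$, so any path out of $w$ stays in $H$; meanwhile saturation (in the reformulated version) says $s(v\Lambda^{\leq n})\not\subseteq H$ for all $n$. But cofinality applied to the pair $(v,w)$ would give some $n$ with $s(w\Lambda^{\leq n})\subseteq s(v\Lambda)$; combining with hereditariness $s(w\Lambda^{\leq n})\subseteq H$ while $s(v\Lambda)$ meets the complement of $H$ — I need to chase this carefully, and the cleanest route is to instead apply cofinality to $(w,v)$ to get $n$ with $s(v\Lambda^{\leq n})\subseteq s(w\Lambda)\subseteq H$, directly contradicting saturation. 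That is the crux of this implication.

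For $\eqref{cif.3}\Rightarrow\eqref{cif.2.1}$, given $v$ and a sequence $(\lambda_i)$ with $\lambda_i\in\Lambda^{\leq(1,\dots,1)}$ composable as stated, I would let $H$ be the set of $u\in\Lambda^0$ such that $u\Lambda\lambda_i(n)=\emptyset$ for \emph{every} $i$ and every $n\leq d(\lambda_i)$ — equivalently, $u$ cannot reach any vertex visited by the ``path'' $\lambda_1\lambda_2\cdots$. The task is to check $H$ is hereditary and saturated: hereditariness is immediate since reachability only improves under passing to sources; saturation requires the reformulated criterion together with the fact that local convexity makes $\Lambda^{\leq e_i}$-extensions behave well, an inductive argument essentially identical in spirit to the proof of Lemma~\ref{saturated}. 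Then, since the vertices $s(\lambda_i)$ (and $r(\lambda_i)$) are not in $H$ (they reach themselves), $H\neq\Lambda^0$, so $H$ is trivial, i.e. $H=\emptyset$, forcing $v\notin H$, which is exactly the conclusion of \eqref{cif.2.1}.

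The remaining implication $\eqref{cif.2.1}\Rightarrow\eqref{cif.1.1}$ should follow a standard ``boundary path'' argument: given $v,w\in\Lambda^0$, build from $w$ a sequence $(\lambda_i)$ of $\Lambda^{\leq(1,\dots,1)}$-paths following an element of $w\Lambda^{\leq\infty}$ (which is nonempty since $\Lambda$ is row-finite and locally convex), apply \eqref{cif.2.1} to obtain $i$ and $n\leq d(\lambda_i)$ with $v\Lambda\lambda_i(n)\neq\emptyset$, and then translate this into a statement of the form $s(w\Lambda^{\leq N})\subseteq s(v\Lambda)$ for suitable $N$ by choosing $N$ large enough to dominate the relevant segment of the boundary path — here one uses that on a finite segment the $\Lambda^{\leq N}$-paths out of $w$ all factor through $\lambda_1\cdots\lambda_i$, at least after enlarging $N$, and local convexity to control the $\Lambda^{\leq}$ truncations. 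The main obstacle throughout is bookkeeping with the $\Lambda^{\leq n}$ truncation operation under local convexity (as opposed to the no-sources setting of \cite{MR2270926}); the inductive extension lemma underlying Lemma~\ref{saturated} is the key technical device, and I expect the saturation verification in $\eqref{cif.3}\Rightarrow\eqref{cif.2.1}$ and the final $N$-choice in $\eqref{cif.2.1}\Rightarrow\eqref{cif.1.1}$ to be where the real work lies.
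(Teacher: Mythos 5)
Your \eqref{cif.1.1}$\Rightarrow$\eqref{cif.3} argument matches the paper's, and your setup for \eqref{cif.3}$\Rightarrow$\eqref{cif.2.1} (the set $H$ of vertices that cannot reach the boundary path) is the right one. But your \eqref{cif.2.1}$\Rightarrow$\eqref{cif.1.1} step has a genuine gap. You fix $v,w$, follow a single boundary path out of $w$, decompose it into segments $\lambda_i\in\Lambda^{\leq(1,\dots,1)}$, apply \eqref{cif.2.1} to get $i$ and $n\leq d(\lambda_i)$ with $v\Lambda\lambda_i(n)\neq\emptyset$, and then claim $s(w\Lambda^{\leq N})\subseteq s(v\Lambda)$ because ``the $\Lambda^{\leq N}$-paths out of $w$ all factor through $\lambda_1\cdots\lambda_i$, at least after enlarging $N$.'' That claim is false whenever $\Lambda$ branches: $s(w\Lambda^{\leq N})$ contains the sources of \emph{all} paths of the relevant degrees out of $w$, most of which need not meet your chosen boundary path, and enlarging $N$ does not help. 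Knowing that $v$ reaches one vertex on one boundary path from $w$ gives no control over the rest of $s(w\Lambda^{\leq N})$. The paper avoids this by arguing contrapositively: assuming cofinality fails for $(v,w)$, it sets $K=\{u\in\Lambda^0 : s(u\Lambda^{\leq n})\not\subseteq s(v\Lambda)\text{ for all } n\}$, shows that every $u\in K$ admits $\mu\in u\Lambda^{\leq e_j}$ with $s(\mu)\in K$ (here row-finiteness lets one take the supremum of finitely many $n_\mu$, and hereditariness of $s(v\Lambda)$ closes the argument), builds the sequence $(\lambda_i)$ entirely inside $K$, and then lets \eqref{cif.2.1} produce a vertex of $K$ lying in $s(v\Lambda)$, a contradiction. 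Some such device for keeping every vertex of the sequence in the ``bad'' set is essential; the direct route you sketch cannot be repaired by bookkeeping with $N$.

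The second issue is the saturation of $H$ in \eqref{cif.3}$\Rightarrow$\eqref{cif.2.1}, which you correctly flag as the real work but describe as ``essentially identical in spirit to the proof of Lemma~\ref{saturated}.'' It is not. The paper's verification assumes $s(u\Lambda^{\leq e_j})\subseteq H$ and $u\notin H$, takes $\lambda\in u\Lambda$ with $s(\lambda)=\lambda_i(n)$, first shows $d(\lambda)_j=0$ (otherwise $\lambda$ factors through $s(u\Lambda^{e_j})\subseteq H$), and then uses local convexity twice to push an $e_j$-edge forward along the boundary path so that $\lambda\beta\lambda_{i+1}$ is a connecting path with $j$-th degree nonzero, contradicting the same computation applied to it. This propagation along the boundary path is a different mechanism from the one-edge-at-a-time induction in Lemma~\ref{saturated}, and it still needs to be supplied for your proof to be complete.
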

\begin{proof}
Firstly we show that \eqref{cif.1.1}$\Rightarrow$\eqref{cif.3}. Suppose \eqref{cif.1.1} and
suppose that $H\subseteq \Lambda^0$ is a nonempty hereditary, saturated set. We show that
$H=\Lambda^0$. Fix $v\in \Lambda^0$. Since $H$ is nonempty, there exists $w\in H$. By
\eqref{cif.1.1} there exists $n\in \NN^k$ such that $s(v\Lambda^{\leq n})\subseteq s(w\Lambda)$.
Since $H$ is hereditary, $s(v\Lambda^{\leq n})\subseteq s(H\Lambda) \subseteq H$. Hence
Lemma~\ref{saturated} gives $v\in H$.

Now we show that \eqref{cif.3}$\Rightarrow$\eqref{cif.2.1}. Suppose that \eqref{cif.2.1} fails,
that is, there exist $v\in \Lambda^0$, and a sequence $(\lambda_i)$ with $\lambda_i\in
\Lambda^{\leq (1,\dots, 1)}, s(\lambda_i)=r(\lambda_{i+1})$ for all $i$ such that for all $i\in
\NN$ and all $n\leq d(\lambda_i)$ we have $v\Lambda \lambda_i(n) = \emptyset$. Let
\[
    H=\{w\in \Lambda^0: w\Lambda \lambda_i(n) = \emptyset \text{ for all } i\in \NN \text{ and }n\leq d(\lambda_i)\}.
\]
Then $H$ is nontrivial as $v\in H$ and hereditary because if $u\Lambda w\neq \emptyset$ then
$s(w\Lambda)\subseteq s(u\Lambda)$. To show that $H$ is saturated take $u \in \Lambda^0$ and
$j\leq k$ such that $s(u{\Lambda}^{\le e_j})\subseteq H$. We must show that $u\in H$. Assume
otherwise for contradiction. We have $u\not\in s(u{\Lambda}^{\le e_j})$ because otherwise $u=s(u)$
belongs to $H$, so $u{\Lambda}^{e_j}\neq \emptyset$. Since $u\not\in H$, there exists $\lambda\in
u\Lambda$ such that $s(\lambda)=\lambda_i(n)$ for some $i,n$. We claim that $d(\lambda)_j=0$.
Indeed, if not, then $\lambda=\mu\mu'$ for some $\mu\in u{\Lambda}^{e_j}$. We then have $s(\mu)\in
s(u{\Lambda}^{e_j})\subseteq H$, so $s(\mu)\Lambda \lambda_i(n) = \emptyset$ contradicting
$\mu'\in s(\mu)\Lambda \lambda_i(n)$. Since $\Lambda$ is locally convex and $d(\lambda)_j=0$ and
since $u\Lambda^{e_j}\neq \emptyset$, we have
$\lambda_i(n)\Lambda^{e_j}=s(\lambda)\Lambda^{e_j}\neq \emptyset$. Let $\beta=\lambda_i(n,
d(\lambda_i))$. Since $\Lambda$ is locally convex, either $d(\beta)_j\neq 0$ or
$s(\lambda_i)\Lambda^{e_j}\neq \emptyset$. Since $\lambda_{i+1}\in\Lambda^{\leq (1,\dots,1)}$ it
follows that $d(\beta\lambda_{i+1})\geq e_j$. Now $\lambda':=\lambda\beta\lambda_{i+1} \in
u\Lambda$ satisfies $s(\lambda')=\lambda_{i'}({n'})$ for some ${i'},{n'}$. But then, just as we got
$d(\lambda)_j=0$, we deduce $d(\lambda')_j=0$, a contradiction. So $H$ is saturated, so \eqref{cif.3}
does not hold.

Finally we prove \eqref{cif.2.1}$\Rightarrow$\eqref{cif.1.1}. Given \eqref{cif.2.1}, we suppose
that \eqref{cif.1.1} fails, and we derive a contradiction. Since \eqref{cif.1.1} fails, there
exist $v,w\in \Lambda^0$ such that for all $n\in \NN^k$, we have $s(w\Lambda^{\leq
n})\not\subseteq s(v\Lambda)$. Set
$$K=\{u\in \Lambda^0: s(u\Lambda^{\leq n})\not\subseteq s(v\Lambda) \text{ for all } n\in \NN^k\}.$$
Fix $u\in K$ and $j\leq k$. We claim that there exists $\mu\in u\Lambda^{\leq e_j}$ such that
$s(\mu)\in K$. Indeed if $s(u\Lambda^{\leq e_j})\subseteq \Lambda^0\setminus K$, then for each
$\mu\in u\Lambda^{\leq e_j}$ there exists $n_\mu\in \NN^k$ such that
$s(\mu\Lambda^{n_\mu})\subseteq s(v\Lambda)$. Since $s(v\Lambda)$ is hereditary, it follows that
$n=\bigvee_{\mu\in u\Lambda^{\leq e_j}} n_\mu$ satisfies
$s(u\Lambda^{\leq{n+e_j}})=\bigcup_{\mu\in u\Lambda^{\leq e_j}}s(\mu \Lambda^{\leq{n}}) \subseteq
s(v\Lambda)$, contradicting $u\in K$.

Since $w\in K$ we can construct a sequence $(\lambda_i)$ such that each $\lambda_i\in
\Lambda^{\leq (1,\dots, 1)}$, each $s(\lambda_i)=r(\lambda_{i+1})$, and for each $n\leq
d(\lambda_i)$ we have $\lambda_i(n)\in K$. By \eqref{cif.2.1} there exist $i$ and $n\leq
d(\lambda_i)$ such that $v\Lambda \lambda_i(n) \neq \emptyset$, i.e., such that
$s(\lambda_i(n)\Lambda^{\leq 0})\subseteq s(v\Lambda)$. So $\lambda_i(n)\not\in K$, a
contradiction.
\end{proof}

\begin{remark}
When a $k$-graph $\Lambda$ has only one vertex, it is automatically cofinal, and we deduce that
the stable rank of $C^*(\Lambda)$ is infinite if there exists $j \le k$ such that $|\Lambda^{e_j}|
\ge 2$, and is equal to $\lfloor k/2\rfloor + 1$ if each $|\Lambda^{e_j}| = 1$.
\end{remark}

\begin{remark}
\label{unknown} We now present all the $2$-graphs $\Lambda$ with $|\Lambda^0|=2$ for which we have
been unable to compute the stable rank of the associated $C^*$-algebra $C^*(\Lambda)$ (see
Figure~\ref{special.graphs.2}). In each case the $2$-graph $\Lambda$ fails to be cofinal, because
$\Lambda^0$ contains one nontrivial hereditary saturated subset, denoted $H$.

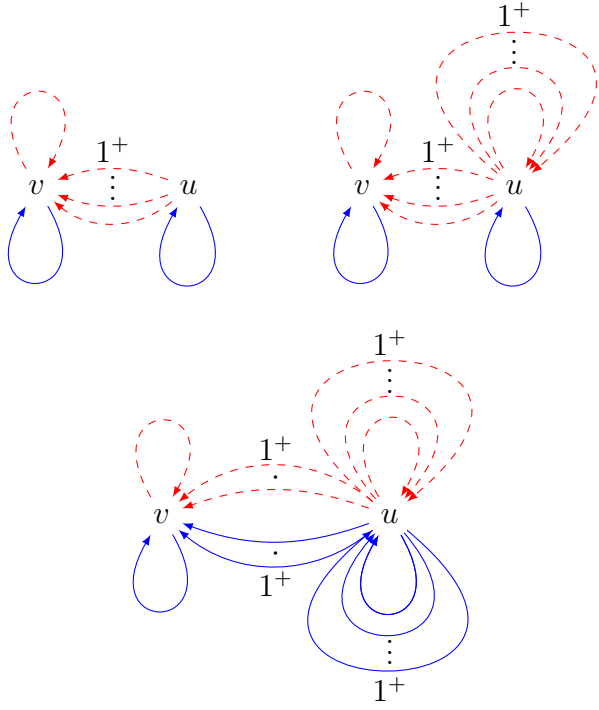
\begin{figure}
\begin{center}
\begin{tikzpicture}

  \node[circle, inner sep=0pt] (v1) at (-2,.1) {$\vdots$};
  \node[circle, inner sep=0pt] (v2) at (-2, .5) {$1^+$};
   \node[inner sep=2.8pt, circle] (28) at (-1,0) {$u$};	
      \node[inner sep=2.8pt, circle] (29) at (-3,0) {$v$};	

       \draw[-latex, red, dashed] (28) edge[out=160,in=20] (29);
    \draw[-latex, red, dashed] (28) edge[out=200,in=340] (29);
        \draw[-latex, red, dashed] (28) edge[out=220,in=320] (29);

   \path[->,every loop/.style={looseness=14}, >=latex] (29)
			 edge  [in=150-90,out=210-90,loop, red, dashed, >=latex] ();
				
   \path[->,every loop/.style={looseness=14}, >=latex] (28)
			 edge  [in=150+90,out=210+90,loop, blue, >=latex] ();
			
   \path[->,every loop/.style={looseness=14}, >=latex] (29)
			 edge  [in=150+90,out=210+90,loop, blue, >=latex] ();			
			 		
\end{tikzpicture}
\begin{tikzpicture}
  \node[circle, inner sep=0pt] (v1) at (-2,.1) {$\vdots$};
  \node[circle, inner sep=0pt] (v2) at (-2, .5) {$1^+$};
    \node[circle, inner sep=0pt] (v2) at (-1,1.9) {$\vdots$};
      \node[circle, inner sep=0pt] (v2) at (-1, 2.3) {$1^+$};
   \node[inner sep=2.8pt, circle] (28) at (-1,0) {$u$};	
      \node[inner sep=2.8pt, circle] (29) at (-3,0) {$v$};	

       \draw[-latex, red, dashed] (28) edge[out=160,in=20] (29);
    \draw[-latex, red, dashed] (28) edge[out=200,in=340] (29);
        \draw[-latex, red, dashed] (28) edge[out=220,in=320] (29);

   \path[->,every loop/.style={looseness=14}, >=latex] (29)
			 edge  [in=150-90,out=210-90,loop, red, dashed, >=latex] ();
			
   \path[->,every loop/.style={looseness=14}, >=latex] (28)
			 edge  [in=150-90,out=210-90,loop, red, dashed, >=latex] ();
			
   \path[->,every loop/.style={looseness=16}, >=latex] (28)
			 edge  [in=150-100,out=210-80,loop, red, dashed, >=latex] ();
			
   \path[->,every loop/.style={looseness=22}, >=latex] (28)
			 edge  [in=150-110,out=210-70,loop, red, dashed, >=latex] ();
				
   \path[->,every loop/.style={looseness=14}, >=latex] (28)
			 edge  [in=150+90,out=210+90,loop, blue, >=latex] ();
			
   \path[->,every loop/.style={looseness=14}, >=latex] (29)
			 edge  [in=150+90,out=210+90,loop, blue, >=latex] ();			
			 		
\end{tikzpicture}
\begin{tikzpicture}
  \node[circle, inner sep=0pt] (v1) at (-1.5,.5) {$\cdot$};
  \node[circle, inner sep=0pt] (v2) at (-1.5, .9) {$1^+$};
    \node[circle, inner sep=0pt] (v1) at (-1.5,-.5) {$\cdot$};
  \node[circle, inner sep=0pt] (v2) at (-1.5,- .9) {$1^+$};
    \node[circle, inner sep=0pt] (v2) at (0,1.9) {$\vdots $};
      \node[circle, inner sep=0pt] (v2) at (0, 2.3) {$1^+$};
         \node[circle, inner sep=0pt] (v2) at (0,-1.7) {$\vdots $};
      \node[circle, inner sep=0pt] (v2) at (0, -2.3) {$1^+$};
   \node[inner sep=2.8pt, circle] (28) at (0,0) {$u$};	
      \node[inner sep=2.8pt, circle] (29) at (-3,0) {$v$};	

             \draw[-latex, red, dashed] (28) edge[out=140,in=40] (29);
       \draw[-latex, red, dashed] (28) edge[out=160,in=20] (29);
    \draw[-latex, blue] (28) edge[out=200,in=340] (29);
        \draw[-latex, blue] (28) edge[out=220,in=320] (29);

   \path[->,every loop/.style={looseness=14}, >=latex] (28)
			 edge  [in=150-90,out=210-90,loop, red, dashed, >=latex] ();
			
   \path[->,every loop/.style={looseness=16}, >=latex] (28)
			 edge  [in=150-100,out=210-80,loop, red, dashed, >=latex] ();
			
   \path[->,every loop/.style={looseness=22}, >=latex] (28)
			 edge  [in=150-110,out=210-70,loop, red, dashed, >=latex] ();

   \path[->,every loop/.style={looseness=14}, >=latex] (28)
			 edge  [in=150+90,out=210-90+180,loop, blue, >=latex] ();
			
   \path[->,every loop/.style={looseness=16}, >=latex] (28)
			 edge  [in=150-100+180,out=210-80+180,loop, blue, >=latex] ();
			
   \path[->,every loop/.style={looseness=22}, >=latex] (28)
			 edge  [in=150-110+180,out=210-70+180,loop, blue, >=latex] ();
				
   \path[->,every loop/.style={looseness=14}, >=latex] (28)
			 edge  [in=150+90,out=210+90,loop, blue, >=latex] ();
			
   \path[->,every loop/.style={looseness=14}, >=latex] (29)
			 edge  [in=150-90,out=210-90,loop, red, dashed, >=latex] ();				
			
   \path[->,every loop/.style={looseness=14}, >=latex] (29)
			 edge  [in=150+90,out=210+90,loop, blue, >=latex] ();			
			 		
\end{tikzpicture}
\caption{Example of $2$-graphs $\Lambda$ with $C^*(\Lambda)$ of stable rank two or thee.}\label{special.graphs.2}
\end{center}
\end{figure}

In Figure~\ref{special.graphs.2}, for each 2-graph $\Lambda$ the $C^*$-algebra $C^*(\Lambda)$ is
non-simple with $H=\{u\}$. In the first case, we have $C^*(H\Lambda) \cong C(\TT)$, which has
stable rank~1, and so $I_H$ has stable rank~1 because stable rank~1 is preserved by stable
isomorphism. In the remaining two cases, if there is one loop of each colour at $u$ then
$C^*(H\Lambda) \cong C(\TT^2)$ has stable rank~2, and otherwise, Theorem~\ref{thm.cofinal} implies
that $C^*(H\Lambda)$ has stable rank~$\infty$; either way, since $I_H \cong C^*(H\Lambda) \otimes
\Kk$\footnote{To see this, let $X$ be the set $\{u\} \cup \{\mu f : f \in v\Lambda^{e_2} u\text{
and } \mu \in \Lambda^{\NN e_2}v\}$. Use the factorisation property and the Cuntz--Krieger
relations to see that $I_H = \overline{\operatorname{span}}\{s_\mu a s^*_\nu : \mu,\nu \in X\text{
and } a \in s_u C^*(\Lambda) s_u\}$. It is routine that for any finite subset $F \subseteq X$, the
set $\{s_\mu s^*_\nu : \mu,\nu \in F\}$ is a system of matrix units. So Lemma~\ref{matrix.system} gives
$\overline{\operatorname{span}}\{s_\mu a s^*_\nu : \mu,\nu \in F\text{ and } a \in s_u
C^*(\Lambda) s_u\} \cong s_u C^*(\Lambda) s_u \otimes M_{|F|}(\mathbb{C})$. Taking the direct
limit gives $I_H \cong s_u C^*(\Lambda) s_u \otimes \Kk(\ell^2(X))$.}, we have $sr(I_H) = 2$ as
discussed in Section~\ref{sec.1.3}.

In all three cases, the quotient of $C^*(\Lambda)$ by $I_H$ is $C^*(\Lambda)/I_H\cong
C^*(\Lambda\setminus \Lambda H)\cong C(\TT^2)$. Hence, by \cite[V.3.1.21]{MR2188261} we deduce
that $sr(C^*(\Lambda))\in \{2,3\}$, but we have been unable to determine the exact value in any of
these cases.

Perhaps the easiest-looking case is the $2$-graph (top left) with one red (dashed) edge from $u$
to $v$. In this case $C^*(\Lambda)\cong \mathcal{T}\otimes C(\TT)$, where $\Tt$  is the Toeplitz
algebra generated by the unilateral shift. Despite knowing the stable rank of each of the
components ($sr(\Tt)=2$ and $sr(C(\TT))=1$) the stable rank of the tensor product is not known
(there is no general formula for stable rank of tensor products).

\end{remark}


\begin{thebibliography}{10}
\bibitem{MR1452183}
A.~an~Huef and I.~Raeburn.
\newblock The ideal structure of {C}untz-{K}rieger algebras.
\newblock {\em Ergodic Theory Dynam. Systems}, 17(3):611--624, 1997.

\bibitem{MR2188261}
B.~Blackadar.
\newblock {\em Operator algebras}, volume 122 of {\em Encyclopaedia of
  Mathematical Sciences}.
\newblock Springer-Verlag, Berlin, 2006.
\newblock Theory of {$C^*$}-algebras and von Neumann algebras, Operator
  Algebras and Non-commutative Geometry, III.

\bibitem{MR4163862}
J.H. Brown, L.O. Clark, and A.~an~Huef.
\newblock Dense subalgebras of purely infinite simple groupoid {C}*-algebras.
\newblock {\em Proc. Edinb. Math. Soc. (2)}, 63(3):609--629, 2020.

\bibitem{MR3471097}
L.G. Brown.
\newblock On higher real and stable ranks for {$CCR$} {$C^*$}-algebras.
\newblock {\em Trans. Amer. Math. Soc.}, 368(10):7461--7475, 2016.

\bibitem{MR3150171}
T.M. Carlsen, S.~Kang, J.~Shotwell, and A.~Sims.
\newblock The primitive ideals of the {C}untz-{K}rieger algebra of a row-finite
  higher-rank graph with no sources.
\newblock {\em J. Funct. Anal.}, 266(4):2570--2589, 2014.

\bibitem{Zstable}
J.~Castillejos, S.~Evington, A.~Tikuisis, S.~White, and W.~Winter.
\newblock Nuclear dimension of simple {$C^\ast$}-algebras.
\newblock {\em Invent. Math.}, 224:245--290, 2021.

\bibitem{MR3507995}
L.O. Clark, A.~an~Huef, and A.~Sims.
\newblock A{F}-embeddability of 2-graph algebras and quasidiagonality of
  {$k$}-graph algebras.
\newblock {\em J. Funct. Anal.}, 271(4):958--991, 2016.

\bibitem{MR2511133}
K.R. Davidson and D.~Yang.
\newblock Representations of higher rank graph algebras.
\newblock {\em New York J. Math.}, 15:169--198, 2009.

\bibitem{MR2001940}
K.~Deicke, J.H. Hong, and W.~Szyma\'{n}ski.
\newblock Stable rank of graph algebras. {T}ype {I} graph algebras and their
  limits.
\newblock {\em Indiana Univ. Math. J.}, 52(4):963--979, 2003.

\bibitem{MR1151561}
M.~D\u{a}d\u{a}rlat, G.~Nagy, A.~N\'{e}methi, and C.~Pasnicu.
\newblock Reduction of topological stable rank in inductive limits of
  {$C^*$}-algebras.
\newblock {\em Pacific J. Math.}, 153(2):267--276, 1992.

\bibitem{MR2013159}
S.~Eilers and G.A. Elliott.
\newblock The {R}iesz property for the {$K_*$}-group of a {$C^*$}-algebra of
  minimal stable and real rank.
\newblock {\em C. R. Math. Acad. Sci. Soc. R. Can.}, 25(4):108--113, 2003.

\bibitem{MR2476944}
G.A. Elliott, T.M. Ho, and A.S. Toms.
\newblock A class of simple {$C^*$}-algebras with stable rank one.
\newblock {\em J. Funct. Anal.}, 256(2):307--322, 2009.

\bibitem{MR3546681}
G.A. Elliott and Z.~Niu.
\newblock On the classification of simple amenable {$C^*$}-algebras with finite
  decomposition rank.
\newblock In {\em Operator algebras and their applications}, volume 671 of {\em
  Contemp. Math.}, pages 117--125. Amer. Math. Soc., Providence, RI, 2016.

\bibitem{MR2920846}
D.G. Evans and A.~Sims.
\newblock When is the {C}untz--{K}rieger algebra of a higher-rank graph
  approximately finite-dimensional?
\newblock {\em J. Funct. Anal.}, 263(1):183--215, 2012.

\bibitem{MR3725497}
I.~Farah and M.~R{\o}rdam.
\newblock Axiomatizability of the stable rank of {$\rm C^*$}-algebras.
\newblock {\em M\"{u}nster J. Math.}, 10(2):269--275, 2017.

\bibitem{MR3056660}
R.~Hazlewood, I.~Raeburn, A.~Sims, and S.B.G. Webster.
\newblock Remarks on some fundamental results about higher-rank graphs and
  their {$C^*$}-algebras.
\newblock {\em Proc. Edinb. Math. Soc. (2)}, 56(2):575--597, 2013.

\bibitem{MR2059803}
J.A. Jeong.
\newblock Stable rank and real rank of graph {$C^*$}-algebras.
\newblock In {\em Operator algebras and applications}, volume~38 of {\em Adv.
  Stud. Pure Math.}, pages 97--106. Math. Soc. Japan, Tokyo, 2004.

\bibitem{MR1868695}
J.A. Jeong, G.H. Park, and D.Y. Shin.
\newblock Stable rank and real rank of graph {$C^*$}-algebras.
\newblock {\em Pacific J. Math.}, 200(2):331--343, 2001.

\bibitem{MR1403994}
E.~Kirchberg.
\newblock Exact {${\rm C}^*$}-algebras, tensor products, and the classification
  of purely infinite algebras.
\newblock In {\em Proceedings of the {I}nternational {C}ongress of
  {M}athematicians, {V}ol. 1, 2 ({Z}\"{u}rich, 1994)}, pages 943--954.
  Birkh\"{a}user, Basel, 1995.

\bibitem{MR1759891}
E.~Kirchberg and M.~R{\o}rdam.
\newblock Non-simple purely infinite {$C^*$}-algebras.
\newblock {\em Amer. J. Math.}, 122(3):637--666, 2000.

\bibitem{MR1745529}
A.~Kumjian and D.~Pask.
\newblock Higher rank graph {$C^*$}-algebras.
\newblock {\em New York J. Math.}, 6:1--20, 2000.

\bibitem{MR2520478}
A.~Kumjian, D.~Pask, and A.~Sims.
\newblock {$C^*$}-algebras associated to coverings of {$k$}-graphs.
\newblock {\em Doc. Math.}, 13:161--205, 2008.

\bibitem{MR2670219}
P.~Lewin and A.~Sims.
\newblock Aperiodicity and cofinality for finitely aligned higher-rank graphs.
\newblock {\em Math. Proc. Cambridge Philos. Soc.}, 149(2):333--350, 2010.

\bibitem{MR2266375}
H.~Osaka and N.C. Phillips.
\newblock Stable and real rank for crossed products by automorphisms with the
  tracial {R}okhlin property.
\newblock {\em Ergodic Theory Dynam. Systems}, 26(5):1579--1621, 2006.

\bibitem{MR2139097}
D.~Pask, J.~Quigg, and I.~Raeburn.
\newblock Coverings of {$k$}-graphs.
\newblock {\em J. Algebra}, 289(1):161--191, 2005.

\bibitem{MR2258220}
D.~Pask, I.~Raeburn, M.~R{\o}rdam, and A.~Sims.
\newblock Rank-two graphs whose {$C^*$}-algebras are direct limits of circle
  algebras.
\newblock {\em J. Funct. Anal.}, 239(1):137--178, 2006.

\bibitem{MR2434188}
D.~Pask, A.~Rennie, and A.~Sims.
\newblock The noncommutative geometry of {$k$}-graph {$C^*$}-algebras.
\newblock {\em J. K-Theory}, 1(2):259--304, 2008.

\bibitem{MR3311883}
D.~Pask, A.~Sierakowski, and A.~Sims.
\newblock Twisted {$k$}-graph algebras associated to {B}ratteli diagrams.
\newblock {\em Integral Equations Operator Theory}, 81(3):375--408, 2015.

\bibitem{MR1745197}
N.C. Phillips.
\newblock A classification theorem for nuclear purely infinite simple
  {$C^*$}-algebras.
\newblock {\em Doc. Math.}, 5:49--114, 2000.

\bibitem{MR1053808}
I.F. Putnam.
\newblock On the topological stable rank of certain transformation group
  {$C^*$}-algebras.
\newblock {\em Ergodic Theory Dynam. Systems}, 10(1):197--207, 1990.

\bibitem{MR2153156}
I.~Raeburn and A.~Sims.
\newblock Product systems of graphs and the {T}oeplitz algebras of higher-rank
  graphs.
\newblock {\em J. Operator Theory}, 53(2):399--429, 2005.

\bibitem{MR1961175}
I.~Raeburn, A.~Sims, and T.~Yeend.
\newblock Higher-rank graphs and their {$C^*$}-algebras.
\newblock {\em Proc. Edinb. Math. Soc. (2)}, 46(1):99--115, 2003.

\bibitem{MR768308}
N.~Riedel.
\newblock On the topological stable rank of irrational rotation algebras.
\newblock {\em J. Operator Theory}, 13(1):143--150, 1985.

\bibitem{MR693043}
M.A. Rieffel.
\newblock Dimension and stable rank in the {$K$}-theory of
  {$C^{\ast}$}-algebras.
\newblock {\em Proc. London Math. Soc. (3)}, 46(2):301--333, 1983.

\bibitem{MR2534246}
D.~Robertson and A.~Sims.
\newblock Simplicity of {$C^\ast$}-algebras associated to row-finite locally
  convex higher-rank graphs.
\newblock {\em Israel J. Math.}, 172:171--192, 2009.

\bibitem{MR2106263}
M.~R{\o}rdam.
\newblock The stable and the real rank of {$\mathcal{Z}$}-absorbing
  {$C^*$}-algebras.
\newblock {\em Internat. J. Math.}, 15(10):1065--1084, 2004.

\bibitem{MR1783408}
M.~R{\o}rdam, F.~Larsen, and N.~Laustsen.
\newblock {\em An introduction to {$K$}-theory for {$C^*$}-algebras}, volume~49
  of {\em London Mathematical Society Student Texts}.
\newblock Cambridge University Press, Cambridge, 2000.

\bibitem{MR1878881}
M.~R{\o}rdam and E.~St{\o}rmer.
\newblock {\em Classification of nuclear {$C^*$}-algebras. {E}ntropy in
  operator algebras}, volume 126 of {\em Encyclopaedia of Mathematical
  Sciences}.
\newblock Springer-Verlag, Berlin, 2002.
\newblock Operator Algebras and Non-commutative Geometry, 7.

\bibitem{MR3345177}
E.~Ruiz, A.~Sims, and A.P.W. S{\o}rensen.
\newblock U{CT}-{K}irchberg algebras have nuclear dimension one.
\newblock {\em Adv. Math.}, 279:1--28, 2015.

\bibitem{MR3418247}
Y.~Sato, S.~White, and W.~Winter.
\newblock Nuclear dimension and {$\mathcal{Z}$}-stability.
\newblock {\em Invent. Math.}, 202(2):893--921, 2015.

\bibitem{MR3354440}
C.P. Schafhauser.
\newblock Finiteness properties of certain topological graph algebras.
\newblock {\em Bull. Lond. Math. Soc.}, 47(3):443--454, 2015.

\bibitem{MR2270926}
A.~Sims.
\newblock Gauge-invariant ideals in the {$C^*$}-algebras of finitely aligned
  higher-rank graphs.
\newblock {\em Canad. J. Math.}, 58(6):1268--1290, 2006.

\bibitem{MR3263040}
A.~Sims and D.P. Williams.
\newblock Amenability for {F}ell bundles over groupoids.
\newblock {\em Illinois J. Math.}, 57(2):429--444, 2013.

\bibitem{MR4165470}
Y.~Suzuki.
\newblock Almost {F}initeness for {G}eneral \'{E}tale {G}roupoids and {I}ts
  {A}pplications to {S}table {R}ank of {C}rossed {P}roducts.
\newblock {\em Int. Math. Res. Not. IMRN}, 19:6007--6041, 2020.

\bibitem{MR1962131}
M.~Tomforde.
\newblock The ordered {$K_0$}-group of a graph {$C^*$}-algebra.
\newblock {\em C. R. Math. Acad. Sci. Soc. R. Can.}, 25(1):19--25, 2003.

\bibitem{MR1691013}
J.~Villadsen.
\newblock On the stable rank of simple {$C^\ast$}-algebras.
\newblock {\em J. Amer. Math. Soc.}, 12(4):1091--1102, 1999.

\end{thebibliography}
\end{document}